\newcommand{ \bl}{\color{blue}}
\newcommand{ \bk}{\color{black}}
\begin{document}
\setlength{\baselineskip}{16pt}

\parindent 0.5cm
\evensidemargin 0cm \oddsidemargin 0cm \topmargin 0cm \textheight
22cm \textwidth 15cm \footskip 2cm \headsep 0cm

\newtheorem{theorem}{Theorem}[section]
\newtheorem{lemma}{Lemma}[section]
\newtheorem{proposition}{Proposition}[section]
\newtheorem{definition}{Definition}[section]
\newtheorem{example}{Example}[section]
\newtheorem{corollary}{Corollary}[section]

\newtheorem{remark}{Remark}[section]

\newtheorem{comment}{Comment}[section]

\numberwithin{equation}{section}

\def\p{\partial}
\def\I{\textit}
\def\R{\mathbb R}
\def\C{\mathbb C}
\def\u{\underline}
\def\l{\lambda}
\def\a{\alpha}
\def\O{\Omega}
\def\e{\epsilon}
\def\ls{\lambda^*}
\def\D{\displaystyle}
\def\wyx{ \frac{w(y,t)}{w(x,t)}}
\def\imp{\Rightarrow}
\def\tE{\tilde E}
\def\tX{\tilde X}
\def\tH{\tilde H}
\def\tu{\tilde u}
\def\d{\mathcal D}
\def\aa{\mathcal A}
\def\DH{\mathcal D(\tH)}
\def\bE{\bar E}
\def\bH{\bar H}
\def\M{\mathcal M}
\renewcommand{\labelenumi}{(\arabic{enumi})}

\def\disp{\displaystyle}
\def\undertex#1{$\underline{\hbox{#1}}$}
\def\card{\mathop{\hbox{card}}}
\def\sgn{\mathop{\hbox{sgn}}}
\def\exp{\mathop{\hbox{exp}}}
\def\OFP{(\Omega,{\cal F},\PP)}
\newcommand\JM{Mierczy\'nski}
\newcommand\RR{\ensuremath{\mathbb{R}}}
\newcommand\CC{\ensuremath{\mathbb{C}}}
\newcommand\QQ{\ensuremath{\mathbb{Q}}}
\newcommand\ZZ{\ensuremath{\mathbb{Z}}}
\newcommand\NN{\ensuremath{\mathbb{N}}}
\newcommand\PP{\ensuremath{\mathbb{P}}}
\newcommand\abs[1]{\ensuremath{\lvert#1\rvert}}

\newcommand\normf[1]{\ensuremath{\lVert#1\rVert_{f}}}
\newcommand\normfRb[1]{\ensuremath{\lVert#1\rVert_{f,R_b}}}
\newcommand\normfRbone[1]{\ensuremath{\lVert#1\rVert_{f, R_{b_1}}}}
\newcommand\normfRbtwo[1]{\ensuremath{\lVert#1\rVert_{f,R_{b_2}}}}
\newcommand\normtwo[1]{\ensuremath{\lVert#1\rVert_{2}}}
\newcommand\norminfty[1]{\ensuremath{\lVert#1\rVert_{\infty}}}

\title{Non-local dispersal equations with almost periodic dependence. II. Asymptotic dynamics of Fisher-KPP equations}

\author{Maria Amarakristi Onyido\,\, and\,\,
 Wenxian Shen\\
Department of Mathematics and Statistics\\
Auburn University\\
Auburn University, AL 36849 }

\date{Dedicated to Professor Jibin Li on the occasion of his 80th Birthday}
\maketitle

\noindent {\bf Abstract.}
This series of two papers is devoted to the study of the principal spectral theory of nonlocal dispersal operators with almost periodic dependence and the study of the asymptotic dynamics of nonlinear nonlocal dispersal equations with almost periodic dependence. In the first part of the series, we investigated the principal spectral theory of nonlocal dispersal operators
from two aspects: top Lyapunov exponents and generalized principal eigenvalues. Among others, we provided various characterizations of the top Lyapunov exponents and generalized principal eigenvalues, established the relations between them, and studied the effect of time and space
 variations on them. In this second part of the series, we  study the asymptotic dynamics of nonlinear nonlocal dispersal equations with almost periodic dependence applying the principal spectral theory developed in the first part. In particular, we study the existence, uniqueness, and stability of strictly positive almost periodic solutions of Fisher KPP equations with nonlocal dispersal and almost periodic dependence.  By the properties of the asymptotic dynamics of nonlocal dispersal Fisher-KPP equations, we also establish a new property of the generalized principal eigenvalues of nonlocal dispersal operators in this paper.

\bigskip

\noindent {\bf Key words.} Nonlocal dispersal,   top Lyapunov exponents, generalized principal eigenvalue,  Fisher-KPP equations, almost periodic solutions.

\bigskip

\noindent {\bf 2020 Mathematics subject classification.} 45C05, 45M05, 45M20, 47G20, 92D25

\newpage

\section{Introduction}
\setcounter{equation}{0}

This   paper is  devoted to the study of the asymptotic dynamics of the following nonlinear nonlocal dispersal equation,
\begin{equation}
\label{main-nonlinear-eq}
\p_t u=\int_D \kappa(y-x)u(t,y)dy+u f(t,x,u),\quad x\in\bar D,
\end{equation}
where $D\subset \RR^N$ is a bounded domain or $D=\RR^N$, and $\kappa(\cdot)$  and $f(\cdot,\cdot,\cdot)$ satisfy

 \medskip

 \noindent {\bf (H1)} {\it $\kappa(\cdot)\in C^1(\RR^{N},[0,\infty))$, $\kappa(0)>0$,
  $\int_{\RR^N}\kappa(x)dx=1$, and there are $\mu,M>0$ such that $\kappa(x)\le e^{-\mu|x|}$ and $|\nabla \kappa|\le e^{-\mu |x|}$
  for $|x|\ge M$.}

 \medskip

 \noindent {\bf (H2)}  {\it $f(t,x,u)$ is $C^1$ in $u$; $f(t,x,u)$ and $f_u(t,x,u)$
are uniformly continuous and bounded  on $(\RR\times \bar D\times E \bl )$ \bk for any bounded set $E\subset \RR$;   $f(t,x,u)$ is almost periodic in $t$ uniformly with respect to $x\in\bar D$ and $u$ in bounded sets of $\RR$;  $f(t,x,u)$ is also almost periodic in $x$ uniformly with respect to $t\in\mathbb{R}$ and $u$ in bounded sets when $D=\mathbb{R}^N$;    $f(t,x,u)+1<0$ for all $(t,x)\in\RR\times\bar D$ and $u\gg 1$;   and $\underset{t\in\RR, x \in \bar{D}}{\sup}f_u(t,x,u) < 0 $ for each $u\ge 0$.}

\medskip

  Typical examples of the kernel function $\kappa(\cdot)$ satisfying { (H1)} include the probability density function of the normal distribution  $\kappa(x)=\frac{1}{\sqrt{(2\pi)^N}}e^{-\frac{|x|^2}{2}}$ and any $C^1$ convolution kernel function supported on a bounded ball $B(0,r)=\{x\in\RR^N\,|\, |x|<r\}$.
 A prototype of $f(t,x,u)$ satisfying { (H2)} is $f(t,x,u)=a(t,x)-b(t,x)u$, where  $a(t,x)$ and  $b(t,x)$  are  bounded and uniformly continuous in $(t,x)\in\RR\times \bar D$;  are almost periodic in $t$
 uniformly with respect to $x\in\bar D$;  are also almost periodic in $x$ uniformly with respect to $t\in\RR$  when $D=\RR^N$; and $\inf_{t\in\RR,x\in\bar D}b(t,x)>0$.

\smallskip

{Dispersal, the mechanism by which a species expands the distribution of its population, is a
central topic in biology and ecology.   Most continuous models related to
dispersal are based upon reaction-diffusion equations  such as
\begin{equation}
\label{random-dirichlet-eq}
\begin{cases}
u_t=\Delta u+u g(t,x,u),\quad x\in \Omega\cr
u=0,\quad x\in \p \Omega,
\end{cases}
\end{equation}
\begin{equation}
\label{random-neumann-eq}
\begin{cases}
u_t=\Delta u+u g(t,x,u),\quad x\in \Omega\cr
\frac{\p u}{\p n}=0,\quad x\in \p \Omega,
\end{cases}
\end{equation}
where $\Omega$ is a bounded smooth domain,
or
\begin{equation}
\label{random-periodic-eq}
u_t=\Delta u+ ug(t,x,u),\quad x\in\RR^N.
\end{equation}
In such  equations, the dispersal is represented by the Laplacian and  is  governed by random walk. It is referred to as random dispersal and   is  essentially a local behavior describing the movement of cells or organisms
between adjacent spatial locations. }

{In reality, the movements  of some organisms
can occur between non-adjacent spatial locations.  For such a model species, one  can think of trees of which seeds and pollens are disseminated on
a wide range. Reaction-diffusion equations are not proper to model such dispersal.}
 The following nonlocal dispersal  equations are commonly used models to integrate the  long range dispersal for populations  having a long range dispersal strategy (see \cite{Fif, GrHiHuMiVi,  HuMaMiVi, LuPaLe, Tur}, etc):
\begin{equation}
\label{dirichlet-kpp-eq} \p_t u=\int_\Omega
\kappa(y-x)u(t,y)dy-u(t,x)+ug(t,x,u),\quad x\in\bar \Omega,
\end{equation}
\begin{equation}
\label{neumann-kpp-eq}
 \p_t u=\int_\Omega \kappa(y-x)[u(t,y)-u(t,x)]dy+ug(t,x,u),\quad x\in\bar \Omega,
\end{equation}
where $\Omega\subset\RR^N$ is a bounded domain,
and
\begin{equation}
\label{periodic-kpp-eq}
 \p_t u=\int_{\RR^N}
\kappa(y-x)[u(t,y)-u(t,x)]dy+ug(t,x,u),\quad x\in\RR^N.
\end{equation}
In equations \eqref{dirichlet-kpp-eq}, \eqref{neumann-kpp-eq}, and  \eqref{periodic-kpp-eq},
 the dispersal kernel $\kappa(\cdot)$ describes the probability to jump from one
location to another and
 the support of  $\kappa(\cdot)$
 can be thought of as the range of dispersion of the cells.

 Observe   that \eqref{dirichlet-kpp-eq}  can be written as
\begin{equation}
\label{dirichlet-kpp-eq1}
 \p_t u=\int_{\RR^N}
\kappa(y-x)[u(t,y)-u(t,x)]dy+ug(t,x,u),\quad x\in\bar\Omega
\end{equation}
 complemented with the following Dirichlet-type boundary condition
\begin{equation}
\label{dirichlet-condition-eq}
u(t,x)=0,\quad x\in\RR^N\setminus \bar\Omega,
\end{equation}
and
\eqref{neumann-kpp-eq} can be written as
\begin{equation}
\label{neumann-kpp-eq1}
 \p_t u=\int_{\RR^N}
\kappa(y-x)[u(t,y)-u(t,x)]dy+ug(t,x,u),\quad x\in\bar\Omega
\end{equation}
complemented with the following Neumann-type boundary condition
\begin{equation}
\label{neumann-condition-eq}
\int_{\RR^N\setminus \Omega}\kappa(y-x)u(t,y)dy=\int_{\RR^N\setminus\Omega}\kappa(y-x)u(t,x)dy,\quad x\in\bar \Omega.
\end{equation}
The reader is referred to \cite{ShXi2} for the relation between \eqref{dirichlet-kpp-eq1}+\eqref{dirichlet-condition-eq} and the  reaction diffusion  equation \eqref{random-dirichlet-eq} with Dirichlet boundary condition,
and the relation between \eqref{neumann-kpp-eq1}+\eqref{neumann-condition-eq} and the  reaction diffusion equation \eqref{random-neumann-eq} with Neumann boundary condition.

Observe also  that \eqref{dirichlet-kpp-eq} (respectively  \eqref{neumann-kpp-eq}, or  \eqref{periodic-kpp-eq}) can be written as \eqref{main-nonlinear-eq} with $D=\Omega$ and $f(t,x,u)=-1+g(t,x,u)$ (respectively  $D=\Omega$ and $f(t,x,u)=-\int_D \kappa(y-x)dy+g(t,x,u)$, or
$D=\RR^N$ and $f(t,x,u)=-1+g(t,x,u)$).
Hence the theory on the  dynamics of \eqref{main-nonlinear-eq} to be developed in this paper  can be applied to  \eqref{dirichlet-kpp-eq}, \eqref{neumann-kpp-eq}, and \eqref{periodic-kpp-eq}.

{
Considering  a population model, among the fundamental dynamical issues are  asymptotic behavior of  solutions with strictly positive initials,
 propagation phenomena of solutions with compact supported  or front-like initials when the underlying environment is unbounded, and the effects of dispersal strategy and spatial-temporal variations on the population dynamics. These dynamical issues  have been extensively studied for population models described by reaction diffusion equations and are quite well understood in many  cases.
Recently there has also been extensive investigation on these dynamical issues for  nonlocal dispersal population
models (see \cite{BaLi0, BaZh, BaLi,  BeCoVo1, Co1, Co3, CoDaMa, CoDaMa1, DeShZh, GaRo1, KaLoSh, LiSuWa, LiWaZh, LiZh, RaSh,  RaShZh,  ShZh1, ShZh2, ShVo, SuLiLoYa, ZhZh1, ZhZh2}, etc.).  However,  the understanding of these  issues
 for  nonlocal dispersal equations  is much less, and, to our knowledge they  have
been essentially investigated in  specific situations such as time and space periodic media or time independent and space heterogeneous media.}

{
The objective of this paper is to study the asymptotic dynamics of  solutions of \eqref{main-nonlinear-eq} with strictly positive initials, which
would provide some foundation for the study of the  propagation dynamics of positive solutions of \eqref{main-nonlinear-eq} with compact supported or front-like initials (see Remark \ref{general-positive-solu-rk}).
We point out that,  in contrast to the Laplacian, the integral operator in \eqref{main-nonlinear-eq}
is not a local operator.
The mathematical analysis of \eqref{main-nonlinear-eq}
appears to be difficult even though the dispersal is represented by a bounded integral operator. Unlike
the case of reaction-diffusion equations, the forward flow associated with \eqref{main-nonlinear-eq} does not have
a regularizing effect. }

 Note that $u(t,x)\equiv 0$ is a solution of \eqref{main-nonlinear-eq}, which is refereed to as the {\it trivial solution}
of \eqref{main-nonlinear-eq}. If $a(t,x)=f(t,x,0)$, then the following nonlocal linear equation
\begin{equation}
\label{main-linear-eq}
\p_t u=\int_D \kappa(y-x)u(t,y)dy+a(t,x)u,\quad x\in \bar D,
\end{equation}
is the linearization of \eqref{main-nonlinear-eq} at this trivial solution. Hence the principal spectral theory  established for \eqref{main-linear-eq} in {\color{red} \cite{OnSh}}
 has its own interests and also plays an important role in the study of the asymptotic dynamics of \eqref{main-nonlinear-eq}
 in {this paper}.

To state the main results on the asymptotic dynamics of strictly positive solutions of \eqref{main-nonlinear-eq} and some new properties of the generalized principal eigenvalues of   \eqref{main-linear-eq} to be established in this paper, we first recall some of the results
on the principal spectrum of \eqref{main-linear-eq} established in {\color{red} \cite{OnSh}}.

\subsection{Principal spectral theory of \eqref{main-linear-eq}}

In this subsection, we recall some of the results
on the principal spectrum of \eqref{main-linear-eq} established in  \cite{OnSh}.

Let
\begin{equation}
\label{X-space-eq}
X(D)= C_{\rm unif}^b(\bar D)=\{u\in C(\bar D)\,|\, u\,\,\, \text{is uniformly continuous and bounded}\}
\end{equation}
with norm $\|u\|=\sup_{x\in D}|u(x)|$.  If no confusion will occur, we may put
$$
X=X(D),
$$
$$
X^+=\{u\in X\,|\, u(x)\ge 0,\quad x\in \bar D\},
$$
and
$$
X^{++}=\{u\in X^+\,|\, \inf_{x\in\bar D} u(x)>0\}.
$$

Throughout the rest of this subsection, we assume that $a(t,x)$ satisfies  the following (H3).

\medskip

 \noindent {\bf (H3)} {\it  $a(t,x)$ is  bounded and uniformly continuous in $(t,x)\in\RR\times \bar D$, and is almost periodic in $t$
 uniformly with respect to $x\in\bar D$, and is also almost periodic in $x$ uniformly with respect to $t\in\RR$  when $D=\RR^N$.}

 \medskip

\noindent  Sometimes, we may also assume that $a(t,x)$ satisfies the following (H3$)^{'}$.

\medskip

\noindent {\bf (H3$)^{'}$} {\it   $a(t,x)$ is limiting almost periodic in $t$ uniformly  with respect to $x\in \bar D$ and is also limiting almost periodic in $x$ when $D=\RR^N$ (see Definition \ref{almost-periodic-def}(2)).}

\medskip

 For any $s\in\RR$ and $u_0\in X$, let $u(t,x;s,u_0)$ be the unique  solution of
\eqref{main-linear-eq} with $u(s,x;s,u_0)=u_0(x)$ (the existence and uniqueness of solutions of \eqref{main-linear-eq}
with given initial function $u_0\in X$ follow from  the general semigroup theory, see \cite{Paz}).  Let $\Psi(t,s;a, D)$ be the solution operator of \eqref{main-linear-eq} on $X$, that is,
\begin{equation*}
\label{Phi-t-s-eq}
\Psi(t,s;a,D)u_0=u(t,\cdot;s,u_0).
\end{equation*}

\begin{definition}
\label{top-lyapunov-exp-def} Let
\begin{equation*}
\label{lyapunov-exp-eq}
\lambda_{PL}(a,D)=\limsup_{t-s\to\infty}\frac{\ln \|\Psi(t,s;a,D)\|}{t-s}, \quad \lambda_{PL}^{'}(a,D)=\liminf_{t-s\to\infty} \frac{\ln \|\Psi(t,s;a,D)\|}{t-s}.
\end{equation*}
$\lambda_{PL}(a,D)$ and $\lambda_{PL}^{'}(a,D)$ are called the {\rm top Lyapunov exponents} of \eqref{main-linear-eq}.
\end{definition}

Let
\begin{equation}
\label{X-script-space-eq}
\mathcal{X}(D)=C_{\rm unif}^b(\RR\times\bar D):=\{u\in C(\RR\times \bar D\,|\, u\,\, \text{is uniformly continuous and bounded}\}
\end{equation}
with the norm $\|u\|=\sup_{(t,x)\in\RR\times \bar D}|u(t,x)|$. In the absence of possible confusion, we may write
$$
\mathcal{X}=\mathcal{X}(D),
$$
$$
\mathcal{X}^+=\{u\in \mathcal{X}\,|\, u(t,x)\ge 0,\quad t\in\RR,\,\, x\in\bar D\},
$$
and
$$
\mathcal{X}^{++}=\{u\in\mathcal{X}^+\,|\, \inf_{t\in\RR,x\in\bar D} u(t,x)>0\}.
$$
Let $L(a):\mathcal{D}(L(a))\subset \mathcal{X}\to \mathcal{X}$
be defined as follows,
$$
(L(a)u)(t,x)=-\p_t u(t,x)+\int_D
\kappa(y-x)u(t,y)dy+a(t,x)u(t,x).
$$
Let
\begin{align}
\label{Lambda-PE-set}
\Lambda_{PE}(a,D)=\big\{\lambda\in\RR\,|\,  & \exists \, \phi\in \mathcal{X},\,\, \inf_{t\in\RR} \phi(t,x)\ge \not\equiv 0,\,\,  { {\rm for\,\, each}\,\, x\in\bar D,\,\,  \phi(\cdot,x)\in W^{1,1}_{\rm loc}(\mathbb{R})\,\, {\rm and}}\nonumber\\
& (L(a)\phi)(t,x)\ge\lambda \phi(t,x)\,\, {\rm for}\,\, a.e.\, t\in \RR\big\}
\end{align}
and
\begin{align}
\label{Lambda-PE-prime-set}
\Lambda_{PE}^{'}(a,D)=\big\{\lambda\in\RR\,|\, &
\exists \, \phi\in \mathcal{X},\,\, \inf_{t\in\RR,x\in\bar D} \phi(t,x)>0,\,\,{{\rm for\,\, each}\,\, x\in\bar D,\,\, \phi(\cdot,x)\in W^{1,1}_{\rm loc}(\mathbb{R})\quad {and }}\nonumber\\
& (L(a)\phi)(t,x)\le\lambda \phi(t,x)\,\, {\rm for}\,\, a.e.\, t\in \RR\big\}.
\end{align}

{We point out that  the condition $ \phi(\cdot,x)\in W^{1,1}_{\rm loc}(\mathbb{R})$ for each  $x\in\bar D$ on the test function $\phi\in\mathcal{X}$ in  \eqref{Lambda-PE-set} and \eqref{Lambda-PE-prime-set} is  needed for the comparison principle (see the proof of Proposition \ref{comparison-prop}).
 In the definition of the sets $\Lambda_{PE}(a,D)$ and $\Lambda_{PE}^{'}(a,D)$   in \cite{OnSh},   this condition was absent,  which is not because it  was  not needed, but was missed.  }

\begin{definition}
\label{generalized-principal-eigenvalue-def} Define
\begin{equation*}
\label{generalized-principal-eigen-eq}
\lambda_{PE}(a,D)=\sup\{\lambda\,|\, \lambda\in\Lambda_{PE}(a,D)\}
\end{equation*}
and
\begin{equation*}
\label{generalized-principal-eigen-eq1}
\lambda_{PE}^{'}(a,D)=\inf\{\lambda\,|\, \lambda\in\Lambda_{PE}^{'}(a,D)\}.
\end{equation*}
Both $\lambda_{PE}(a,D)$ and $\lambda_{PE}^{'}(a,D)$ are called {\rm generalized principal eigenvalues} of \eqref{main-linear-eq}.
\end{definition}

Let
\begin{equation}
\label{time-average-eq}
\hat a(x)=\lim_{T\to\infty}\frac{1}{T}\int_0^T a(t,x)dt
\end{equation}
(see Proposition \ref{almost-periodic-prop} for the existence of
$\hat a(\cdot)$). Let
\begin{equation}
\label{time-space-average1}
\bar a= \frac{1}{|D|}\int_D \hat a(x)dx
\end{equation}
when $D$ is bounded, and
\begin{equation}
\label{time-space-average2}
\bar a=\lim_{q_1,q_2,\cdots,q_N\to\infty}\frac{1}{q_1q_2\cdots q_N}\int_{0}^{q_N}\cdots\int_0^{q_2}\int_0^{q_1} \hat a(x_1,x_2,\cdots,x_N)dx_1dx_2\cdots dx_N
\end{equation}
when $D=\RR^N$ and $a(t,x)$ is almost periodic in $x$ uniformly with respect to $t\in\RR$ (see Proposition \ref{almost-periodic-prop} for the existence of
 $\bar a$). Note that $\hat a(x)$ is the time average of $a(t,x)$, and $\bar a$ is the space average
of $\hat a(x)$.

 If no confusion occurs, we may put $\lambda_{PL}(a)=\lambda_{PL}(a,D)$,
$\lambda_{PL}^{'}(a)=\lambda_{PL}^{'}(a,D)$, $\lambda_{PE}(a)=\lambda_{PE}(a,D)$, and $\lambda_{PE}^{'}(a)=\lambda_{PE}^{'}(a,D)$.
Among others, we proved the following results in \cite{OnSh}.

\begin{proposition}
\label{spectrum-prop}
Assume (H3).
\begin{itemize}

\item[(1)] (Theorem 1.1 in \cite{OnSh})
$$
\lambda_{PL}^{'}(a)=\lambda_{PL}(a)=\lim_{t-s\to\infty} \frac{\ln\|\Psi(t,s;a)u_0\|}{t-s}=\lim_{t-s\to\infty} \frac{\ln\|\Psi(t,s;a)\|}{t-s}$$
for any $u_0\in X$ with
$\inf_{x\in D}u_0(x)>0$.

\item[(2)] (Theorem 1.2 in \cite{OnSh, OnSh1})
$
\lambda_{PE}(a)\le \lambda_{PE}^{'}(a)= \lambda_{PL}(a).
$
If $a(t,x)$ satisfies (H3$)^{'}$, then
$\lambda_{PE}(a)=\lambda_{PE}^{'}(a).
$

\item[(3)]   (Theorem 1.4 in \cite{OnSh, OnSh1})  $\lambda_{PL}(a)\ge { \lambda_{PL}(\hat a)}\ge \sup_{x\in D}\hat a(x)$.

   \item[(4)] (Theorem 1.3 in \cite{OnSh, OnSh1}) $\lambda_{PE}(a)\ge \sup_{x\in D}\hat a(x)$. If $a(t,x)$ satisfies (H3$)^{'}$, then
$\lambda_{PE}(a) \ge \lambda_{PE}(\hat{a}) \ge \sup_{x\in D}\hat a(x).$

\item[(5)] (Theorem 1.3 in \cite{OnSh,OnSh1}) If $D$ is bounded,  $a(t,x)\equiv a(x)$, and $\kappa(\cdot)$ is symmetric, then
$$\lambda_{PE}(a)\ge \bar a+\frac{1}{|D|}\int_D\int_D \kappa(y-x)dydx,
$$
 where $|D|$ is the Lebesgue measure of $D$.

 \item[(6)] (Theorem 1.3 in \cite{OnSh,OnSh1})  If $D=\RR^N$, $a(t,x)\equiv a(x)$  is almost periodic in $x$, and
 $\kappa(\cdot)$ is symmetric,  then
 $$\lambda_{PE}(a)\ge \bar a+1.
 $$

\item[(7)] (Theorem 1.5(1) in \cite{OnSh, OnSh1}) If $a(t,x)\equiv a(x)$ and satisfies $(H3)^{'}$, then
$$
 \lambda_{PE}(a)=\sup\{\lambda\,|\, \lambda\in \tilde \Lambda_{PE}(a)\}=\inf\{\lambda\,|\,
 \lambda\in\tilde \Lambda_{PE}^{'}(a)\}=\lambda_{PE}^{'}(a),
 $$
 where
\begin{align*}
\tilde\Lambda_{PE}(a)=\{\lambda\in\RR\,|\, \exists \, \phi\in {X},\,\,  \phi(x)\ge \not\equiv  0,\,\, \int_D \kappa(y-x)\phi(y)dy+a(x)\phi(x)\ge\lambda \phi(x)\,\, \forall\, x\in \bar D\}
\end{align*}
and
\begin{align*}
\tilde \Lambda_{PE}^{'}(a)= \{\lambda\in\RR\,|\, \exists \, \phi\in {X},\,\,  \inf_{x\in\bar D}\phi(x)>0,\,\, \int_D \kappa(y-x)\phi(y)dy+a(x)\phi(x)\le \lambda \phi(x)\,\, \forall\, x\in \bar D\}.
\end{align*}

\end{itemize}

\end{proposition}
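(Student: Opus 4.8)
Since Proposition~\ref{spectrum-prop} bundles several statements, I would organize the proof around the two structural facts that underlie all of them. First, \emph{positivity}: since $\kappa(0)>0$ the kernel has a positive diagonal, so $\Psi(t,s;a)$ maps $X^+\setminus\{0\}$ into $X^{++}$ for every $t>s$ and a uniform Harnack-type estimate holds; consequently $\|\Psi(t,s;a)u_0\|\asymp\|\Psi(t,s;a)\mathbf 1\|\asymp\|\Psi(t,s;a)\|$ with constants independent of $s$ whenever $\inf u_0>0$. Second, \emph{almost periodicity}: one may pass to the compact hull $H(a)$ of $a$ and work with the associated continuous linear skew-product semiflow over the uniquely ergodic base flow on $H(a)$. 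Item~(1) then follows: the map $(t,s)\mapsto\ln\|\Psi(t,s;a)\|$ is subadditive under concatenation (cocycle identity $\Psi(t,s)=\Psi(t,r)\Psi(r,s)$ plus submultiplicativity of the norm), so a subadditive ergodic theorem over the uniquely ergodic base — equivalently, the principal-spectrum theory for positive linear skew-product semiflows — shows $\frac1{t-s}\ln\|\Psi(t,s;a_*)\|$ converges as $t-s\to\infty$ to a value independent of $a_*\in H(a)$, and the sandwich above identifies it with $\lim\frac1{t-s}\ln\|\Psi(t,s;a)u_0\|$ for every $u_0\in X^{++}$ and with $\lambda_{PL}(a)=\lambda_{PL}'(a)$.

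For item~(2), the inequality $\lambda_{PE}(a)\le\lambda_{PE}'(a)$ is obtained by pairing a subsolution $\phi_1$ (with $\inf_t\phi_1(t,x)\ge\not\equiv0$) for a parameter $\lambda$ against a strictly positive supersolution $\phi_2$ for a parameter $\mu$: the hypothesis $\phi_i(\cdot,x)\in W^{1,1}_{\rm loc}(\RR)$ makes $\phi_1/\phi_2$ admissible in the comparison principle of Proposition~\ref{comparison-prop}, and the standard sub/supersolution ordering rules out $\lambda>\mu$, so $\sup\Lambda_{PE}(a)\le\inf\Lambda_{PE}'(a)$. For $\lambda_{PE}'(a)=\lambda_{PL}(a)$: ``$\ge$'' is immediate, since any supersolution $\phi$ for $\mu$ forces $\Psi(t,s;a)\phi(s,\cdot)\le e^{\mu(t-s)}\phi(t,\cdot)$ by comparison, whence $\|\Psi(t,s;a)\|\le C_\phi e^{\mu(t-s)}$ and $\lambda_{PL}(a)\le\mu$; ``$\le$'' follows by verifying that $\phi(t,x)=\int_0^{\infty}e^{-(\lambda_{PL}(a)+\epsilon)r}\big(\Psi(t,t-r;a)\mathbf 1\big)(x)\,dr$ is bounded and bounded below (boundedness above uses the uniform limit of item~(1)) and is a supersolution for $\lambda_{PL}(a)+\epsilon$. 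Under (H3)$'$ one must in addition produce a \emph{sub}solution realizing $\lambda_{PL}(a)$; I would obtain it by translating positive solutions along a suitable time sequence, extracting in the hull a limit coefficient $a_\infty$ together with a bounded, bounded-below entire positive solution of the $a_\infty$-equation with exact exponential rate $\lambda_{PL}(a)$, and transferring this back to $a$ using that $a$ is limiting almost periodic. Item~(7) is the $t$-independent specialization: when $a=a(x)$ the extremal test functions in $\Lambda_{PE}$ and $\Lambda_{PE}'$ may be taken $t$-independent (on the $\inf$ side after a further averaging in $x$, legitimate by almost periodicity of $a(\cdot)$), identifying $\lambda_{PE}(a),\lambda_{PE}'(a)$ with the ``elliptic'' quantities built from $\tilde\Lambda_{PE}(a),\tilde\Lambda_{PE}'(a)$, and item~(2) closes the equality.

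For items~(3)--(6): with the $t$-independent coefficient $\hat a$, the operator $\mathcal L u(x)=\int_D\kappa(y-x)u(y)\,dy+\hat a(x)u(x)$ is bounded and positive on $X(D)$, its spectral bound equals $\lambda_{PL}(\hat a)$, and testing it against nonnegative bumps concentrated near a near-maximizer of $\hat a$ gives $\lambda_{PL}(\hat a)\ge\sup_D\hat a$. When moreover $\kappa$ is symmetric and $a=a(x)$, $\mathcal L$ is self-adjoint on $L^2(D)$, so its Rayleigh quotient bounds its top spectral value — hence $\lambda_{PE}(a)$, via the $t$-independent theory of item~(7) — from below; evaluating at $\phi\equiv1$ yields $\lambda_{PE}(a)\ge\bar a+\frac1{|D|}\int_D\int_D\kappa(y-x)\,dy\,dx$ for bounded $D$, and the analogue $\bar a+1$ for $D=\RR^N$ by running the same computation along a Weyl sequence $\phi_R=\mathbf 1_{[0,R]^N}$ and letting $R\to\infty$, using that edge effects vanish and that the spatial mean of the almost periodic function $a(\cdot)$ is $\bar a$ (Proposition~\ref{almost-periodic-prop}). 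Finally $\lambda_{PL}(a)\ge\lambda_{PL}(\hat a)$ — and, under (H3)$'$, $\lambda_{PE}(a)\ge\lambda_{PE}(\hat a)$, which with item~(2) also gives $\lambda_{PE}(a)\ge\sup_D\hat a$ — is a ``temporal averaging lowers the principal eigenvalue'' statement: writing $v=\ln u(t,\cdot;0,\mathbf 1)$, so that $\partial_t v(t,x)=\int_D\kappa(y-x)e^{v(t,y)-v(t,x)}\,dy+a(t,x)$, time-averaging over $[0,T]$, and letting $T\to\infty$ (using item~(1) and the uniform Harnack bound on the left and $\hat a(x)$ on the linear term), Jensen's inequality applied to the averaged nonlocal term produces the comparison.

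The step I expect to be the genuine obstacle is the equality $\lambda_{PE}(a)=\lambda_{PE}'(a)$ under (H3)$'$ in item~(2): the nonlocal flow has no smoothing effect, so no compactness is handed over for free, and the limiting bounded-below entire positive solution realizing $\lambda_{PL}(a)$ has to be extracted by exploiting the ``limiting almost periodic'' structure with care. A related subtlety, which is precisely why the sharp $\lambda_{PE}$-bounds in items~(3)--(4) are stated under (H3)$'$ rather than merely (H3), is that the indefinite integral of a zero-mean almost periodic function need not be bounded, so the naive construction $\phi(t,x)=\psi(x)\exp\!\big(\int_0^t(a(s,x)-\hat a(x))\,ds\big)$ of a $t$-dependent sub-eigenfunction from an elliptic one is unavailable in general.
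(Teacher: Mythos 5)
This proposition is not proved in the paper under review: it is a verbatim recapitulation of Theorems 1.1--1.5 of the companion paper \cite{OnSh} (and the corrigendum \cite{OnSh1}), and the present paper simply cites those results. So there is no in-paper proof to compare your outline against; I can only assess the outline on its own merits.

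Most of what you write is a reasonable reconstruction of the kind of argument that is standard in this setting (cocycle over the compact hull and unique ergodicity for item (1); sub/super-solution pairing and the resolvent-type integral $\phi(t,x)=\int_0^\infty e^{-\mu r}(\Psi(t,t-r;a)\mathbf 1)(x)\,dr$ for item (2); a Jensen/log-substitution averaging argument for the $\lambda_{PL}$ half of item (3); Rayleigh-quotient computations with $\phi\equiv 1$ and Weyl sequences for (5)--(6)). Two remarks on precision in item (1): the claim that $\Psi(t,s;a)$ maps $X^{+}\setminus\{0\}$ into $X^{++}$ for every $t>s$ is false when $D=\RR^N$ --- a compactly supported nonnegative initial datum does not acquire a \emph{uniform} positive lower bound on $\RR^N$ in finite time --- but this does not harm you because the statement only requires $u_0\in X^{++}$, and then $\|\Psi(t,s;a)\|\le\|u_0\|^{-1}\inf u_0^{-1}\cdot\|\Psi(t,s;a)u_0\|\le\|\Psi(t,s;a)\|\cdot\|u_0\|/\inf u_0$ follows from positivity and monotonicity alone, no Harnack inequality needed.

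There is, however, one genuine gap, and your closing paragraph shows you did not notice it because you misread the statement. Item (4) asserts $\lambda_{PE}(a)\ge\sup_{x\in D}\hat a(x)$ under (H3) \emph{alone}; only the stronger chain $\lambda_{PE}(a)\ge\lambda_{PE}(\hat a)\ge\sup\hat a$ is asserted under (H3$)^{'}$. Your proposed route to $\sup\hat a$ passes through $\lambda_{PL}(a)\ge\lambda_{PL}(\hat a)\ge\sup\hat a$ (items (1)--(3)), which by item (2) gives $\lambda_{PE}^{'}(a)\ge\sup\hat a$. Since under (H3) alone one only knows $\lambda_{PE}(a)\le\lambda_{PE}^{'}(a)$ (and the paper explicitly flags in Remark \ref{remark-0}(1) that equality is open under (H3)), this does \emph{not} yield $\lambda_{PE}(a)\ge\sup\hat a$. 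You need a direct construction of a bounded, uniformly continuous test function $\phi\in\mathcal{X}$ with $\inf_t\phi(t,x)\ge\not\equiv 0$ and $(L(a)\phi)(t,x)\ge(\sup\hat a-\epsilon)\phi$ a.e., and you yourself point out that the naive choice $\phi(t,x)=\psi(x)\exp\!\big(\int_0^t(a(s,x)-\hat a(x))\,ds\big)$ can be unbounded because the indefinite integral of a zero-mean almost periodic function need not be bounded. Your final sentence claims that this is ``precisely why the sharp $\lambda_{PE}$-bounds in items (3)--(4) are stated under (H3$)^{'}$ rather than merely (H3)''; but item (3) is a $\lambda_{PL}$ statement and the first clause of item (4) is stated under (H3), so the difficulty you identify is real and your outline leaves it unresolved. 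Whatever argument the companion paper uses for this first clause of (4), your sketch does not reproduce it.
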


We conclude this subsection with some remark on the generalized principal  eigenvalues and top Lyapunov exponents  of  \eqref{main-linear-eq}.

\begin{remark}
\label{remark-0}
\begin{itemize}

\item[(1)] It remains open whether $\lambda_{PE}(a)=\lambda_{PE}^{'}(a)$ for any $a(\cdot,\cdot)$ satisfying (H3).

\item[(2)]  The test function $\phi$ in the definition of $\lambda_{PE}^{'}(a)$ and $\lambda_{PE}(a)$ is not required to be almost periodic in $t$.
 The definition of $\lambda_{PL}(a)$, $\lambda_{PL}^{'}(a)$, $\lambda_{PE}(a)$, and $\lambda_{PE}^{'}(a)$ applies to the case where $a(t,x)$ is bounded and uniformly continuous in $t\in\mathbb{R}$ and $x\in\bar D$. For such general $a(t,x)$, by the arguments of  Theorem 1.2 in \cite{OnSh},
we have the following relations between $\lambda_{PL}(a)$, $\lambda_{PL}^{'}(a)$, $\lambda_{PE}(a)$, and $\lambda_{PE}^{'}(a)$,
$$
\lambda_{PE}(a)\le \lambda_{PL}^{'}(a)\le\lambda_{PL}(a)\le\lambda_{PE}^{'}(a).
$$

\item[(3)] {Assume that $a(t,x)$ satisfy (H3) with $D=\RR^N$. By the definition of $\lambda_{PE}^{'}(a,D)$, it is easy to see that
\begin{equation}
\label{monotone-domain-eq1}
\lambda_{PE}^{'}(a,D_1)\le\lambda_{PE}^{'}(a,D_2)
\end{equation}
for any $D_1\subset D_2\subset\RR^N$.  But due to the requirement of the continuity of the test functions in $\Lambda_{PE}(a,D)$, it is not clear whether
$\lambda_{PE}(a,D_1)\le\lambda_{PE}(a,D_2)$ also holds for any $D_1\subset D_2$.  In this paper, we will prove this also holds by applying the criteria for the existence of strictly positive entire solutions of \eqref{main-nonlinear-eq} (see Theorem \ref{pev-thm1}).}
\end{itemize}
\end{remark}

\subsection{Main results}

In this subsection, we state the main results of this paper. Throughout this subsection, we assume (H1) and (H2).

  Observe that  a function $u(t,x)$ satisfying  \eqref{main-nonlinear-eq} need not be continuous in $x$.
 In this paper, unless specified otherwise, when we say that $u(t,x)$ is a {\it  solution of \eqref{main-nonlinear-eq} on an interval $I$}, it means that,  for each $t\in I$,  $u(t,\cdot)\in X$, and
the mapping $I\ni t\mapsto u(t,\cdot)\in X$ is differentiable.  Such a solution $u(t,x)$ is clearly  {differentiable in $t$ and} is continuous in both $t$ and $x$.

 Note that, by general semigroup theory (see \cite{Paz}), for any $s\in\RR$ and $u_0\in X$,   \eqref{main-nonlinear-eq}
has a unique (local) solution $u(t,x;s,u_0)$ with $u(s,x;s,u_0)=u_0(x)$.  Moreover, for any $u_0\in X^+$, $u(t,x;s,u_0)$ exists globally, that is, $u(t,x;s,u_0)$ exists for all $t\ge s$ (see
the comparison principle,  Proposition \ref{comparison-prop} (2)).
A solution $u(t,x)$ of \eqref{main-nonlinear-eq} defined for all $t\in\RR$ is called an {\it entire solution}.
An entire solution  $u(t,x)$ of \eqref{main-nonlinear-eq} is said to be  {\it  positive} if $u(t,x)>0$ for any $(t,x)\in \RR\times \bar D$ and {\it strictly positive} if $\inf_{t\in\RR,x\in\bar D} u(t,x)>0$. A strictly positive entire solution $u(t,x)$ of \eqref{main-nonlinear-eq}  is called an {\it almost periodic solution} if
it is almost periodic in $t$ uniformly with respect to $x\in\bar D$ in the case that $D$ is bounded and is almost periodic in both $t$ and $x$ in the case that $D=\RR^N$.

 In the rest of the paper, $u(t,x;s,u_0)$ always denotes the solution of \eqref{main-nonlinear-eq} with
$u(s,\cdot;s,u_0)=u_0\in X$, unless specified otherwise.
 Among others, we prove

\begin{theorem}
\label{uniqueness-thm}
\begin{itemize}
\item[(a)] (Uniqueness)
There is at most one strictly positive  bounded   entire solution of \eqref{main-nonlinear-eq}.

\item[(b)] (Almost periodicity)  Any strictly positive   bounded entire solution of \eqref{main-nonlinear-eq} is almost periodic.

\item[(c)] (Stability)
 If $u^*(t,x)$ is a strictly positive    bounded almost periodic solution of \eqref{main-nonlinear-eq}, then for any $u_0\in X^{++}$,
$$
\lim_{t\to\infty}
\|u(t,\cdot;t_0,u_0)-u^*(t,\cdot)\|_\infty=0.
$$

\item[(d)] (Frequency module) If $u^*(t,x)$ is a strictly positive   bounded almost periodic solution of \eqref{main-nonlinear-eq}, then
$$
\mathcal{M}(u^*)\subset \mathcal{M}(f),
$$
 where $\mathcal{M}(\cdot)$ denotes the frequency module of an almost periodic function.
\end{itemize}
\end{theorem}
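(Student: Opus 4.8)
The plan is to treat the four parts in the order (a), (c), (b), (d), since uniqueness and stability are really two faces of a single comparison argument, almost periodicity will follow once we know the solution attracts everything, and the frequency module inclusion is a standard consequence of the uniqueness under the hull. The central tool throughout is the comparison principle (Proposition \ref{comparison-prop}) together with the strict concavity-type hypothesis $\sup_{t,x} f_u(t,x,u)<0$ for each $u\ge 0$, which plays the role that $u\mapsto u f(t,x,u)$ being strictly sublinear plays in the classical KPP theory.

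For part (a), suppose $u_1^*$ and $u_2^*$ are two strictly positive bounded entire solutions. Set $\rho_0=\inf\{\rho\ge 1 : \rho\, u_2^*(t,x)\ge u_1^*(t,x)\ \forall (t,x)\}$, which is finite and $\ge 1$ by boundedness and strict positivity of both; by symmetry it suffices to show $\rho_0=1$. If $\rho_0>1$, compare $u_1^*$ with $v:=\rho_0 u_2^*$: using $f_u<0$ one checks that $v$ is a strict super-solution of \eqref{main-nonlinear-eq} on all of $\RR$ in the sense that $\p_t v-\int_D\kappa(y-x)v(t,y)dy-v f(t,x,v)\ge \rho_0 u_2^*\big(f(t,x,u_2^*)-f(t,x,\rho_0 u_2^*)\big)>\delta>0$ uniformly, because $\rho_0 u_2^*-u_2^*$ is bounded below by a positive constant and $f_u$ is bounded away from $0$ on the relevant range. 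Running the comparison principle backward from time $+\infty$ (or, more carefully, on $[s,\infty)$ and letting $s\to-\infty$, using that solutions with ordered bounded data stay ordered and that the strict super-solution forces a definite gap) yields $(\rho_0-\varepsilon)u_2^*\ge u_1^*$ for some $\varepsilon>0$, contradicting minimality of $\rho_0$. Hence $\rho_0=1$, and symmetrically, so $u_1^*\equiv u_2^*$.

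For part (c), let $u^*$ be the strictly positive bounded almost periodic solution and $u_0\in X^{++}$. Pick $\rho\ge 1$ large so that $\rho\, u^*(t_0,\cdot)\ge u_0\ge \rho^{-1}u^*(t_0,\cdot)$; by the comparison principle $u(t,\cdot;t_0,\rho^{-1}u^*(t_0))\le u(t,\cdot;t_0,u_0)\le u(t,\cdot;t_0,\rho u^*(t_0))$ for $t\ge t_0$, so it suffices to show the two extremal solutions converge to $u^*$. For the upper one, $w(t):=u(t,\cdot;t_0,\rho u^*(t_0))$ is nonincreasing after comparing with the super-solution $\rho u^*$ (again using $f_u<0$ so that $\rho u^*$ is a super-solution), stays $\ge u^*$, hence $w(t,x)\downarrow w_\infty(t,x)$ along the semiflow; a standard limiting/compactness argument on the hull of $f$ (or monotone convergence plus continuity of the semigroup) shows a limit trajectory is a strictly positive bounded entire solution of an equation in the hull, and pairing with the almost periodicity of $u^*$ one identifies it with $u^*$ via the uniqueness from part (a) applied in the hull. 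The lower one is symmetric. This forces $\|u(t,\cdot;t_0,u_0)-u^*(t,\cdot)\|_\infty\to 0$.

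For part (b), let $u^{**}$ be any strictly positive bounded entire solution; by part (a) it coincides with $u^*$ once we know $u^*$ exists, so the real content is: a strictly positive bounded entire solution is automatically almost periodic. Here I would invoke the standard skew-product framework: consider the hull $H(f)$, the induced semiflow on $X\times H(f)$, and note that by parts (a)–(c) each $g\in H(f)$ admits a unique strictly positive bounded entire solution $u_g^*$ which is globally attracting; uniqueness forces the section map $g\mapsto u_g^*(0,\cdot)$ to be well-defined, and a continuity argument (using uniform continuity/boundedness of $f$ from (H2) and the attractivity from (c) to get equicontinuity in $g$) shows it is continuous, whence $u_g^*$ is a continuous equivariant section over the almost periodic minimal flow $H(f)$; such a section is almost periodic in $t$ (and, when $D=\RR^N$, almost periodic in $x$ as well, by also taking the hull in the $x$-variable, which (H2) permits). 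Part (d) is then immediate: the frequency module inclusion $\M(u^*)\subset\M(f)$ is the standard statement that a uniquely determined, continuously-varying entire solution over the hull has frequencies contained in those of the driving term — this follows from the Favard-type / module containment theorem for almost periodic functions once $u^*$ is realized as a continuous section of the skew-product flow driven by $f$.

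The main obstacle I expect is not the algebra of super/sub-solutions but the backward-in-time comparison in part (a): because \eqref{main-nonlinear-eq} has no regularizing effect and we cannot solve backward, the squeezing argument that produces a definite improvement of the ratio $\rho_0$ must be set up as a forward statement on $[s,\infty)$ with $s\to-\infty$, carefully exploiting that the strict super-solution gap $\delta>0$ is uniform in time so that the ratio is pushed below $\rho_0$ after a fixed time independent of $s$. A secondary subtlety is justifying the identification of monotone limits in part (c) with $u^*$ rather than merely with \emph{some} entire solution — this is exactly where the uniqueness from (a), applied to every equation in the hull $H(f)$ (which requires checking that (H1)–(H2) are inherited by hull limits, in particular $f_u<0$ uniformly and $f+1<0$ for $u\gg1$ pass to the limit), does the work.
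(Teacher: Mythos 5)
Your proposal is, at heart, the same argument as the paper's, phrased in the language of ratios rather than the part metric. Note that your $\rho_0=\inf\{\rho\ge1:\rho\,u_2^*\ge u_1^*\}$ is exactly $e^{\sup_t\rho(u_1^*(t,\cdot),u_2^*(t,\cdot))}$ where $\rho$ is the part metric introduced in Section 2.3, and your ``strict super-solution forces a definite gap'' step is precisely the content of Proposition~\ref{new-prop2}(2) (uniform strict decrease of the part metric over a fixed time $\tau$), which the paper imports from \cite{Kong}. Your forward estimate $\partial_t w\ge -Cw+\delta$ for $w=\rho_0 u_2^*-u_1^*$ (after dropping the nonnegative nonlocal term) does give, upon integrating over $[s,t]$ and letting $s\to-\infty$, the bound $w\ge\delta/C$; the ``backward comparison'' worry you raise is actually not an issue precisely because both trajectories are entire, so the estimate is purely forward. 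So (a) is sound and essentially equivalent to the paper's argument, just with the technical step made explicit instead of cited.

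Where you do diverge — and where there is a genuine gap — is part (c). You assert that $w(t)=u(t,\cdot;t_0,\rho u^*(t_0))$ is ``nonincreasing after comparing with the super-solution $\rho u^*$,'' but in the almost periodic (non-periodic) setting there is no Poincar\'e map, and comparison with $\rho u^*$ only gives $w(t,\cdot)\le\rho u^*(t,\cdot)$, not any monotonicity of $t\mapsto w(t,\cdot)$ or of its ratio to $u^*(t,\cdot)$. The monotone-iteration scheme you sketch is a periodic-case argument that does not transfer. The correct fix is the one the paper uses: apply Proposition~\ref{new-prop2}(1)–(2) directly to $\rho(u(t+t_0,\cdot;t_0,u_0),u^*(t+t_0,\cdot))$, noting that the bounds in \eqref{sol bound}–\eqref{bound eqn} make the strict-decrease constant $\tilde\sigma$ uniform in $t_0$, so that the part metric reaches any prescribed threshold in finitely many time steps, uniformly in $t_0\in\RR$. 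This is the same estimate you used in (a); you should reuse it rather than appeal to monotonicity.

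For (b) and (d) your skew-product/hull/section viewpoint and the paper's direct use of Bochner's double-sequence criterion (Lemma~\ref{ptw def}) together with the module containment theorem (Proposition~\ref{mod cont}) are two phrasings of the same argument; both route through ``uniqueness in the hull,'' i.e.\ applying (a) to each limit equation. One small point worth being careful about in either phrasing: before invoking uniqueness for a limit trajectory one must check it is a genuine entire solution in the sense of the paper (i.e.\ $\RR\ni t\mapsto u(t,\cdot)\in X$ differentiable), not merely a pointwise-in-$x$ solution; this is the role of Lemma~\ref{new-new-new-lm2} in the paper, and your compactness argument should pass through an analogous step.
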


\begin{theorem}
\label{existence-thm}
 Let  $a(t,x)=f(t,x,0)$.
\begin{itemize}

 \item[(a)]  (Existence)  Equation \eqref{main-nonlinear-eq}
has a strictly positive   bounded almost periodic  solution if and only if $\lambda_{PE}(a) > 0$.

\item[(b)]  (Nonexistence)  If $\lambda_{PL}(a) < 0$,  then the trivial solution
 $u\equiv 0$ of \eqref{main-nonlinear-eq} is globally asymptotically stable in the sense that for any $u_0 \in \mathcal{X}^{+}$,  $$\|u(t, \cdot ; 0, u_0) \|_{X} \to 0 \; as \; t \to \infty.$$
\end{itemize}
\end{theorem}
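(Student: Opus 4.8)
Parts (a) and (b) are proved separately; (b) is short, and (a) splits into two implications. For (b): by (H2), $f_u<0$, so $f(t,x,u)\le f(t,x,0)=a(t,x)$ for $u\ge0$; since $u(t,\cdot;0,u_0)\ge0$ by comparison with the trivial solution, $\partial_t u\le\int_D\kappa(y-x)u(t,y)\,dy+a(t,x)u$, i.e. $u(\cdot;0,u_0)$ is a subsolution of \eqref{main-linear-eq}, and the comparison principle gives $0\le u(t,\cdot;0,u_0)\le\Psi(t,0;a)u_0$ for $t\ge0$. Since $\lambda_{PL}(a)<0$, Proposition \ref{spectrum-prop}(1) yields $\ln\|\Psi(t,0;a)\|/t\to\lambda_{PL}(a)<0$, hence $\|\Psi(t,0;a)\|\to0$ and $\|u(t,\cdot;0,u_0)\|_X\le\|\Psi(t,0;a)\|\,\|u_0\|_X\to0$, proving (b).

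\textbf{Necessity in (a).} Let $u^*$ be a strictly positive bounded entire solution; by Theorem \ref{uniqueness-thm}(b) it is almost periodic, hence $u^*\in\mathcal X$, and $m:=\inf_{t,x}u^*>0$, $\bar M:=\sup u^*<\infty$. From $a(t,x)-f(t,x,u^*(t,x))=-\int_0^{u^*(t,x)}f_u(t,x,\sigma)\,d\sigma$ and the fact that $g(\sigma):=\sup_{t,x}f_u(t,x,\sigma)$ is continuous (uniform continuity of $f_u$, (H2)) and negative on $[0,\bar M]$, one gets a constant $c>0$ with $a(t,x)-f(t,x,u^*(t,x))\ge c$ everywhere. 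As $u^*$ solves \eqref{main-nonlinear-eq}, this says $(L(a)u^*)(t,x)=u^*(t,x)\bigl(a(t,x)-f(t,x,u^*(t,x))\bigr)\ge c\,u^*(t,x)$; since $\inf_t u^*(t,x)\ge m>0$ for each $x$ and $u^*(\cdot,x)\in C^1(\RR)\subset W^{1,1}_{\rm loc}(\RR)$, $u^*$ is admissible in \eqref{Lambda-PE-set} with $\lambda=c$, so $\lambda_{PE}(a)\ge c>0$.

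\textbf{Sufficiency in (a).} Assume $\lambda_{PE}(a)>0$; fix $\lambda\in(0,\lambda_{PE}(a))$ and a test function $\phi\in\mathcal X$ with $\phi\ge0$, $x\mapsto\inf_t\phi(t,x)$ not identically $0$, $\phi(\cdot,x)\in W^{1,1}_{\rm loc}(\RR)$, and $(L(a)\phi)\ge\lambda\phi$ (a.e. in $t$). By (H2) every sufficiently large constant $\bar M$ is a supersolution of \eqref{main-nonlinear-eq} (as $\int_D\kappa(y-x)\,dy\le1$ and $f(t,x,\bar M)+1<0$ for $\bar M\gg1$), and, by uniform continuity of $f$, for $\delta>0$ small $\delta\phi$ is a subsolution (on $\{0\le w\le\delta\|\phi\|\}$, $f(t,x,w)\ge a(t,x)-\lambda$); shrinking $\delta$, also $\delta\phi\le\bar M$. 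With $u_n(t,\cdot):=u(t,\cdot;-n,\delta\phi(-n,\cdot))$, sub/supersolution comparison gives $\delta\phi(t,\cdot)\le u_n(t,\cdot)\le\bar M$ for $t\ge-n$ and $u_{n+1}\ge u_n$ on the common interval, so $U:=\lim_n u_n$ exists (dominated convergence in the integral form of \eqref{main-nonlinear-eq}) and is a bounded entire solution with $\delta\phi\le U\le\bar M$. A short argument using $\kappa(0)>0$, continuity of $\kappa$, $U\ge0$ and uniqueness of solutions shows that either $U\equiv0$ (excluded since $\phi\not\equiv0$) or $U>0$ on all of $\RR\times\bar D$. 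Once $U$ is known to be \emph{strictly} positive, Theorem \ref{uniqueness-thm}(a),(b) show it is the unique strictly positive bounded entire solution and that it is almost periodic, completing (a).

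\textbf{Main obstacle.} The hard part is upgrading ``$U>0$ pointwise'' to ``$\inf_{t,x}U>0$''. I would use $U\ge\delta\phi$ together with the fact that, since $\inf_t\phi(t,x_0)>0$ for some $x_0$, the value $U(t,x_0)$ is bounded below by a positive constant uniformly in $t\in\RR$. If $\inf_{t,x}U=0$, pick $t_n$ (and, for bounded $D$, $x_n\to x_*$) with $U(t_n,x_n)\to0$; translating in time (and in space when $D=\RR^N$) and passing to a limit $U^*$ of $\{U(\cdot+t_n,\cdot)\}$ --- which solves \eqref{main-nonlinear-eq} with $f$ replaced by an element of the hull of $f$ --- yields $0\le U^*\le\bar M$, $U^*(t,x_0)>0$ for all $t$, yet $U^*$ vanishing at a point, contradicting the positivity dichotomy. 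Making this precise requires genuinely nonlocal compactness: one must control the spatial modulus of continuity of $\{U(t,\cdot):t\in\RR\}$, equivalently, establish uniform persistence of \eqref{main-nonlinear-eq} away from $u\equiv0$ uniformly over the hull, using that $\lambda_{PE}$ is constant along the hull of $a$ and positive. That persistence/compactness step is where most of the work lies; by contrast the sub/supersolution construction and the two eigenvalue inequalities are routine given \cite{OnSh} and Theorem \ref{uniqueness-thm}.
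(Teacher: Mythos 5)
Your part (b) and the necessity direction of (a) match the paper's argument. For sufficiency, the paper pulls back from the constant supersolution $M$ (getting a \emph{decreasing} sequence $u(t,\cdot;-K,M)$) rather than from $\delta\phi$ as you do; this is a cosmetic difference and both converge to the same entire solution. The substantive issue is exactly the one you flag as the ``main obstacle'': upgrading pointwise positivity of the limit $U$ to $\inf_{t\in\RR,x\in\bar D}U>0$. You describe a plan (translate in time/space, pass to a hull limit, invoke a positivity dichotomy) but do not carry it out, and as written it would not close. First, the ``positivity dichotomy'' for the nonlocal operator is not free: one needs a quantitative lower bound of the form ``if $\int_{B_{r_0}}\!u\ge\delta_0$ then $e^{\mathcal K}u\ge\mu$ on a prescribed larger ball,'' which is precisely Proposition~\ref{new-prop1} in the paper; without it your limit $U^*$ could vanish at the bad point while being positive elsewhere. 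Second, for $D=\RR^N$ a compactness-via-the-hull argument is not enough by itself, since the pointwise lower bound you get from the subsolution $\delta\phi$ is concentrated near one $x_0$; the paper instead exploits the almost periodicity of $a(t,\cdot)$ in $x$ directly, covering $\RR^N$ by $\varepsilon$-translation numbers $\tilde x_n$ and comparing with translated sub-solutions to propagate the lower bound uniformly. Finally, you would also need the analogue of Lemma~\ref{new-new-new-lm2} to show that the monotone limit $U$, which a priori only satisfies the equation pointwise for each fixed $x$, is uniformly continuous in $(t,x)$ and hence a solution in the sense required by Theorem~\ref{uniqueness-thm}; this too is omitted. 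In short: your outline is the right skeleton and is faithful to the paper's strategy, but the uniform strict positivity step --- which you correctly identify as the crux --- is missing, and it is exactly where Proposition~\ref{new-prop1} and the spatial almost periodicity are used in an essential way.
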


\begin{corollary}
\label{cor-1}
 Let  $a(t,x)=f(t,x,0)$.
\begin{itemize}
\item[(a)]
If $\underset{x \in D}{\sup}\hat a(x)>0$, then  equation \eqref{main-nonlinear-eq}
has a strictly positive almost periodic solution.

\item[(b)] If $\kappa(\cdot)$ is symmetric,  $a(t,x)\equiv a(x)$, and   $\bar a> - \frac{1}{|D|}\int_D\int_D \kappa(y-x)dydx$ when $D$ is bounded and  $\bar a>-1$ when $D=\RR^N$, then equation \eqref{main-nonlinear-eq}
has a strictly positive almost periodic solution.
\end{itemize}
\end{corollary}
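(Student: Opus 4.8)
The plan is to obtain this corollary directly from Theorem \ref{existence-thm}(a), which says that \eqref{main-nonlinear-eq} admits a strictly positive bounded almost periodic solution precisely when $\lambda_{PE}(a)>0$, by feeding it the lower bounds for $\lambda_{PE}(a)$ collected in Proposition \ref{spectrum-prop}. Before applying those, I would first record that $a(t,x):=f(t,x,0)$ satisfies (H3): by (H2) the function $f(t,x,u)$, and hence $f(t,x,0)$, is bounded and uniformly continuous on $\RR\times\bar D$, almost periodic in $t$ uniformly in $x\in\bar D$, and almost periodic in $x$ uniformly in $t\in\RR$ when $D=\RR^N$. Thus (H3) holds and every statement of Proposition \ref{spectrum-prop} is available.

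For part (a), I would simply invoke the first inequality of Proposition \ref{spectrum-prop}(4), namely $\lambda_{PE}(a)\ge\sup_{x\in D}\hat a(x)$, which holds under (H3). The hypothesis $\sup_{x\in D}\hat a(x)>0$ then forces $\lambda_{PE}(a)>0$, and Theorem \ref{existence-thm}(a) yields a strictly positive bounded almost periodic solution of \eqref{main-nonlinear-eq}, which is in particular a strictly positive almost periodic solution.

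For part (b), I would split into the two cases. When $D$ is bounded, $a(t,x)\equiv a(x)$, and $\kappa(\cdot)$ is symmetric, Proposition \ref{spectrum-prop}(5) gives $\lambda_{PE}(a)\ge \bar a+\frac{1}{|D|}\int_D\int_D\kappa(y-x)\,dy\,dx$, so the assumption $\bar a>-\frac{1}{|D|}\int_D\int_D\kappa(y-x)\,dy\,dx$ forces $\lambda_{PE}(a)>0$. When $D=\RR^N$, the time-independent coefficient $a(x)=f(\cdot,x,0)$ is almost periodic in $x$ by (H2), so Proposition \ref{spectrum-prop}(6) applies and gives $\lambda_{PE}(a)\ge \bar a+1$; the assumption $\bar a>-1$ then gives $\lambda_{PE}(a)>0$. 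In either case Theorem \ref{existence-thm}(a) produces the desired strictly positive (bounded) almost periodic solution.

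I do not expect any real obstacle here: the argument is pure bookkeeping on top of Theorem \ref{existence-thm} and Proposition \ref{spectrum-prop}. The only points deserving a line of care are the verification that $a=f(\cdot,\cdot,0)$ inherits the regularity and almost periodicity required by (H3) and, for the $D=\RR^N$ case of (b), the almost periodicity in $x$ needed by Proposition \ref{spectrum-prop}(6); and the observation that the ``bounded'' qualifier in Theorem \ref{existence-thm}(a) is stronger than the conclusion asserted in the corollary, so nothing is lost. No new estimates or constructions are needed.
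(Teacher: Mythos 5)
Your proposal is correct and follows essentially the same route as the paper: both parts are obtained by combining the lower bounds for $\lambda_{PE}(a)$ from Proposition \ref{spectrum-prop} (items (4), (5), (6)) with the existence criterion of Theorem \ref{existence-thm}(a). The extra care you take in checking that $a(t,x)=f(t,x,0)$ inherits (H3) from (H2) is a welcome but unremarkable addition.
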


\begin{proof} (a)   By \cite[Theorem 1.3(1)]{OnSh}, $\lambda_{PE}(a)\ge \sup_{x\in D}\hat a(x)$;  (a) then follows from  Theorems \ref{uniqueness-thm} and \ref{existence-thm}.

(b)  By \cite[Theorem 1.3(2),(3)]{OnSh},
$\lambda_{PE}(a)\ge \bar a+ \frac{1}{|D|}\int_D\int_D \kappa(y-x)dydx$ when $D$ is bounded and  $\lambda_{PE}(a)\ge \bar a+1$ when $D=\RR^N$; (b) then follows from Theorems \ref{uniqueness-thm} and \ref{existence-thm}.
\end{proof}

We also establish a new property of the generalized principal eigenvalues of \eqref{main-linear-eq} on the domain $D$.

\begin{theorem}
\label{pev-thm1}  {Suppose that $a(t,x)$ satisfies (H3) with $D=\RR^N$.}
For any $D_1\subset D_2$,  there holds
\begin{equation}
\label{monotone-domain-eq2}
\lambda_{PE}(a,D_1)\le \lambda_{PE}(a,D_2),
\end{equation}
{where $D_1$ is bounded and $D_2$ is bounded or $D_2=\RR^N$.}
\end{theorem}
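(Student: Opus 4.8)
The plan is to deduce \eqref{monotone-domain-eq2} from the existence criterion for strictly positive solutions of an auxiliary Fisher-KPP equation on $D_2$ (Theorem~\ref{existence-thm}(a)), as anticipated in Remark~\ref{remark-0}(3). First I would make a shifting reduction. Since $\lambda_{PE}(a,D_1)\ge\sup_{x\in D_1}\hat a(x)>-\infty$ by Proposition~\ref{spectrum-prop}(4), it is enough to prove $\lambda_{PE}(a,D_2)\ge\lambda$ for every $\lambda<\lambda_{PE}(a,D_1)$ and then let $\lambda\uparrow\lambda_{PE}(a,D_1)$. Fix such a $\lambda$ and set $b:=a-\lambda$, which again satisfies (H3) with $D=\RR^N$. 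Because $L(b)\phi=L(a)\phi-\lambda\phi$, one has $\Lambda_{PE}(b,D)=\Lambda_{PE}(a,D)-\lambda$, hence $\lambda_{PE}(b,D)=\lambda_{PE}(a,D)-\lambda$ for every domain $D$; in particular $\lambda_{PE}(b,D_1)=\lambda_{PE}(a,D_1)-\lambda>0$, so it suffices to show $\lambda_{PE}(b,D_2)\ge0$.

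To exploit $\lambda_{PE}(b,D_1)>0$ I would pass to the nonlinearity $f(t,x,u):=b(t,x)-u$. One checks that $f$ satisfies (H2): $f$ is $C^1$ in $u$ with $f_u\equiv-1$; $f$ and $f_u$ are bounded, uniformly continuous, and (by (H3) for $b$) almost periodic in $t$, as well as in $x$ when $D=\RR^N$; $f(t,x,u)+1<0$ whenever $u>\|b\|_\infty+1$; and $\sup_{t,x}f_u=-1<0$. Moreover $f(t,x,0)=b(t,x)$, so \eqref{main-linear-eq} with coefficient $b$ is the linearization of \eqref{main-nonlinear-eq} at $u\equiv0$. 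Since $\lambda_{PE}(b,D_1)>0$, Theorem~\ref{existence-thm}(a) on $D_1$ yields a strictly positive bounded almost periodic solution $u_1^*$ of \eqref{main-nonlinear-eq} on $D_1$; put $\delta_1:=\inf u_1^*>0$ and $M:=\max\{\|u_1^*\|_\infty,\ \|b\|_\infty+1\}$.

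Next I would build a nonnegative, nontrivial, bounded entire solution of \eqref{main-nonlinear-eq} on $D_2$. Write $u_D(t,\cdot;s,u_0)$ for the solution of \eqref{main-nonlinear-eq} on the domain $D$ with value $u_0$ at time $s$. Since $\int_{D_2}\kappa(y-x)\,dy\le1$, the constant $M$ is a supersolution on $D_2$, so $0\le u_{D_2}(t,\cdot;s,M)\le M$ and, for fixed $t$, $s\mapsto u_{D_2}(t,\cdot;s,M)$ is nondecreasing by the comparison principle (Proposition~\ref{comparison-prop}); hence $U(t,\cdot):=\lim_{s\to-\infty}u_{D_2}(t,\cdot;s,M)$ exists with $0\le U\le M$, and passing to the limit in $u_{D_2}(t,\cdot;s,M)=u_{D_2}(t,\cdot;r,u_{D_2}(r,\cdot;s,M))$ shows $U$ is a bounded entire solution on $D_2$. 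For $x\in\bar D_1$ one has $\partial_t u_{D_2}(t,x;s,M)\ge\int_{D_1}\kappa(y-x)u_{D_2}(t,y;s,M)\,dy+u_{D_2}(t,x;s,M)f(t,x,u_{D_2}(t,x;s,M))$, so $u_{D_2}(\cdot;s,M)$ restricted to $\bar D_1$ is a supersolution of \eqref{main-nonlinear-eq} on $D_1$; comparison on $D_1$ together with $M\ge\|u_1^*\|_\infty\ge u_1^*(s,\cdot)$ gives $u_{D_2}(t,\cdot;s,M)\big|_{\bar D_1}\ge u_{D_1}(t,\cdot;s,M)\ge u_{D_1}(t,\cdot;s,u_1^*(s,\cdot))=u_1^*(t,\cdot)$. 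Letting $s\to-\infty$ yields $U(t,x)\ge u_1^*(t,x)\ge\delta_1$ for all $t\in\RR$, $x\in\bar D_1$; in particular $U\not\equiv0$.

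Finally, from $\partial_tU=\int_{D_2}\kappa(y-\cdot)U\,dy+U(b-U)$ on $D_2$ I get the identity
\[
(L(b)U)(t,x)=-\partial_tU(t,x)+\int_{D_2}\kappa(y-x)U(t,y)\,dy+b(t,x)U(t,x)=U(t,x)^2\ \ge\ 0\cdot U(t,x).
\]
Since $U\in\mathcal{X}(D_2)$, $U\ge0$, $U\not\equiv0$, and $U(\cdot,x)\in C^1(\RR)\subset W^{1,1}_{\rm loc}(\RR)$, this exhibits $U$ as an admissible test function, so $0\in\Lambda_{PE}(b,D_2)$ and $\lambda_{PE}(b,D_2)\ge0$; with the reduction this proves \eqref{monotone-domain-eq2}. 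The case $D_2=\RR^N$ needs no separate treatment, because the test-function criterion uses only nonnegativity and nontriviality of $U$, not a positive lower bound for $\inf_{D_2}U$ (which would in fact be false in general). The step requiring the most care is the construction of $U$: one must verify that the decreasing limit of the solutions $u_{D_2}(\cdot;s,M)$ is again a solution in the sense of this paper — in particular that $U\in\mathcal{X}(D_2)$, i.e.\ uniformly continuous and bounded on $\RR\times\bar D_2$ — and that Proposition~\ref{comparison-prop} applies both to the constant supersolution $M$ and to the restriction to $\bar D_1$ of the $D_2$-solution. Once these routine but necessary points are in place, the identity $(L(b)U)=U^2$ completes the argument with essentially no further work.
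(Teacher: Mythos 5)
Your overall strategy is the same as the paper's: introduce a logistic nonlinearity, use Theorem~\ref{existence-thm}(a) to obtain a strictly positive almost periodic solution $u_1^*$ on $D_1$, construct a candidate test function on $D_2$ as the decreasing limit of solutions started from a large constant, bound it below by $u_1^*$ on $D_1$, and then read off the test-function inequality. The algebraic identity $(L(b)U)=U^2$ at the end is exactly what the paper uses (written there as $L(a)u_2^*\ge(\delta/2)u_2^*$). So this is not a different route; it is the paper's route with one significant shortcut, and that shortcut is where the argument fails.

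The gap is the claim that admissibility of $U$ as a test function is ``routine'' and requires only nonnegativity and nontriviality. The definition of $\Lambda_{PE}(a,D_2)$ requires the test function to lie in $\mathcal{X}(D_2)=C_{\rm unif}^b(\RR\times\bar D_2)$, i.e.\ to be \emph{uniformly} continuous on $\RR\times\bar D_2$. A decreasing pointwise limit of uniformly continuous solutions is not automatically uniformly continuous (or even continuous); in this paper the only mechanism for upgrading such a pointwise limit to an element of $\mathcal{X}$ is Lemma~\ref{new-new-new-lm2}, whose hypotheses include that the limit function be \emph{strictly} positive. Lemma~\ref{new-new-new-lm2} in turn rests on the ODE uniqueness statement Lemma~\ref{new-new-new-lm1}, which again requires a positive lower bound. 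You explicitly decline to prove that $\inf_{\RR\times\bar D_2}U>0$ (and even assert it ``would in fact be false in general''), but in doing so you forfeit the only available route to $U\in\mathcal{X}(D_2)$. Without this, $U$ cannot legitimately be used in the definition of $\Lambda_{PE}(b,D_2)$, and the conclusion $0\in\Lambda_{PE}(b,D_2)$ is unjustified.

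This also explains a structural feature of the paper's proof that your proposal discards: the use of two distinct nonlinearities, $a-\delta$ on $D_1$ and $a-\delta/2$ on $D_2$. The $\delta/2$ gap is not cosmetic; it is the amount of slack that allows translates $u_1^*(\cdot,\cdot-x_n)$ to serve as subsolutions of the $D_2$ problem whenever $x_n$ is a $(\delta/2)$-almost period of $a$ in $x$, and since $\RR^N$ is covered by translates of a fixed ball by such almost periods, this yields a uniform positive lower bound for $u_2^*$ on all of $\RR^N$. With a single shifted coefficient $b=a-\lambda$ there is no slack to absorb the translation error $|a(t,x+x_n)-a(t,x)|$, and the almost-periodicity argument for global strict positivity cannot be run. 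So the missing strict-positivity step is not an optional refinement: it is precisely what the paper's $\delta$-versus-$\delta/2$ device is engineered to deliver, and it is a prerequisite for invoking Lemma~\ref{new-new-new-lm2} to conclude $U\in\mathcal{X}(D_2)$. To repair the proposal you would need to reinstate the gap (e.g.\ use $f(t,x,u)=b(t,x)-\eta-u$ on $D_1$ and $f(t,x,u)=b(t,x)-\eta/2-u$ on $D_2$ for small $\eta>0$), carry out the almost-periodic covering argument to show $U$ is strictly positive on $\bar D_2$, and only then cite Lemma~\ref{new-new-new-lm2} to verify $U\in\mathcal{X}(D_2)$.
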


{We point out that in this paper as well as \cite{OnSh}, we consider
$\lambda_{PE}(a,D)$   with $D$ being either bounded or the whole space $\RR^N$. Hence it is assumed that $D_1$ is bounded in Theorem \ref{pev-thm1}. For otherwise, if $D_1=\RR^N$, then $D_2=\RR^N$ and nothing needs to be proved.  }

\subsection{Comments on the main results}

In this subsection, we give some comments on the main results of this paper.

First, we give some comments on our results in some special cases.

Second,  we give some comments on the time and space variations.

\begin{comment} [Effects of time and space variations]
\label{variation-rk}
$\quad$

\begin{itemize}
\item[(1)]
If $a(t,x)=f(t,x,0)$ is limiting almost periodic, then $\lambda_{PE}(a)\ge \lambda_{PE}(\hat a)$
(see Proposition \ref{spectrum-prop}(4)), which   shows that time variation does not reduce  the generalized principal eigenvalue $\lambda_{PE}$. Thus  Theorem \ref{existence-thm}(b) indicates that time variation may favor the persistence of species.

\item[(2)]  If $a(t,x)=f(t,x,0)$ is independent of $t$, $\kappa(\cdot)$ is symmetric, and $D=\RR^N$, then
$\lambda_{PE}(a)\ge \bar a+1=\lambda_{PE}(\bar a)$ (see Proposition \ref{spectrum-prop}(6)).     Theorem \ref{existence-thm}(b) then  indicates that space variation may favor the persistence of species.
\end{itemize}
\end{comment}

Third, we give some comments on the proofs of the main results.

\begin{comment}[Difficulties in the proofs]
\label{proof-rk}
By Theorem \ref{existence-thm}, $\lambda_{PE}(a)>0$ is
 a necessary and sufficient condition
 for the existence of a unique strictly positive almost periodic solutuon of \eqref{main-nonlinear-eq}, where
$a(t,x)=f(t,x,0)$.  Note that $\lambda_{PE}(a)>0$ indicates that the trivial solution $u=0$ of \eqref{main-nonlinear-eq} is unstable.
It is naturally expected that the instability of the trivial solution $u=0$ implies the existence of a  positive entire solution.
In fact, this has been proved  for the random dispersal counterpart of \eqref{main-nonlinear-eq}.
  However, thanks to  the lack of  the regularizing effect of the forward flow associated with \eqref{main-nonlinear-eq}
and  the lack of Poincar\'e map in non-periodic time dependent case,
it is very nontrivial to prove the existence of strictly positive almost periodic solutions of \eqref{main-nonlinear-eq}.
\end{comment}

Fourth, we  give some comments on  the extension of the main results  to more general cases.

\begin{comment} [Extension of the main results to non-almost periodic cases]
\label{extension-rk}
As mentioned in Remark \ref{remark-0}, the definitions of $\lambda_{PL}(a),\lambda_{PL}^{'}(a),\lambda_{PE}(a),$ and $\lambda_{PE}^{'}(a)$ apply to general $a(t,x)$ which is bounded and uniformly continuous.
 When $f(t,x,u)$ is not assumed to be almost periodic in $t,$ if $\lambda_{PE}(a)>0$ ($a(t,x)=f(t,x,0)$),  we still have a positive continuous function $u^*(t,x)$ which satisfies \eqref{main-nonlinear-eq} for all $t\in\RR$ and $x\in\bar D$.
Moreover, if  $D$ is bounded, then $u^*(t,x)$ is a
 strictly positive entire solution of \eqref{main-nonlinear-eq} and  is asymptotically stable with respect to positive perturbations.
But in general, $u^*(t,x)$ may not be strictly positive (see Remark \ref{positive-entire-solution-rk}).
\end{comment}

Finally, we give some comments on the application of the main results to the study of propagation phenomena in \eqref{main-nonlinear-eq} when $D=\RR^N$.

The rest of the paper is organised as follows. In section 2 we give the preliminary definitions and results to be used in the rest of the paper.
Theorems \ref{uniqueness-thm} and \ref{existence-thm} are proved in Sections 3 and 4 respectively. Section 5 is devoted to the proof of Theorem \ref{pev-thm1}.

\section{Preliminary}

In this section, we present the preliminary definitions and results to be used in the rest of the paper.

\subsection{Almost Periodic functions}

In this subsection, we recall the definition  and present some basic properties of almost periodic functions.

\begin{definition} \label{almost-periodic-def}
\begin{itemize}

\item[(1)]  A bounded function $f\in\C(\RR,\RR)$ is said to be {\rm almost periodic} if for any $\epsilon>0$,
the set
$$
T(f,\epsilon)=\{\tau\in\RR\,|\, |f(t+\tau)-f(t)|<\epsilon\quad \forall\, t\in\RR\}
$$
is relatively dense in $\RR$.

\item[(2)]  Let $f(t,x) \in C(\mathbb{R} \times E, \mathbb{R})$, where $E$ is a subset of $\mathbb{R}^N$, $f(t,x)$ is said to be almost periodic in t uniformly with respect to  $x \in E$, if it is uniformly continuous on
$\RR\times E$ and for any fixed $x \in E$, $f(t,x)$ is an almost periodic function of $t$.

\item[(3)] Let $E\subset\RR^N$ and $f\in C(\RR\times E,\RR)$. $f$ is said to be {\rm limiting almost periodic in $t$ uniformly with respect to $x\in E$}  if there is a sequence $f_n(t,x)$ of uniformly continuous functions which are periodic in $t$ such that $$
    \lim_{n\to\infty} f_n(t,x)=f(t,x)
    $$
    uniformly in $(t,x)\in\RR\times E$.

    \item [(4)] Let $f \in C(\RR\times \RR^N,\RR)$.  $f(t,x)$ is said to be {\rm almost periodic in $x$ uniformly with respect to $t\in\RR$} if $f$ is uniformly continuous in $(t,x)\in\RR\times\RR^N$ and for each $1\le i\le N$, $f(t,x_1,x_2,\cdots,x_N)$ is almost periodic
    in $x_i$.

\item[(5)]  Let $f(t,x) \in  C(\RR \times E, \RR)$ be an almost periodic function in $t$ uniformly with respect to $x\in E\subset \RR^N$. Let $\Lambda$ be the set of real numbers $\lambda$ such that
$$ a(x,\lambda,f):=\lim_{T\to\infty} \int_{0}^{T} f(t,x)e^{-i\lambda t}\; dt $$
 is not identically zero for $x \in E$. The set consisting of all real numbers which are linear combinations of elements of the set $\Lambda$ with integer coefficients is called the  {\rm frequency module of $f(t,x)$},  which we denote by $\mathcal{M}(f).$

\end{itemize}

\end{definition}

\begin{lemma}\label{ptw def}
A function $f(t,x)$ is almost periodic in $t$ uniformly with respect to $x\in E\subset \RR^K$  if and only if
it is uniformly continuous on $\RR\times E$ and
 for every pair of sequences $\{s_n\}_{n = 1}^{\infty}, \; \{r_m\}_{m = 1}^{\infty}$, there are subsequences $\{s_n^{'}\}_{n = 1}^{\infty} \subset \{s_n\}_{n = 1}^{\infty} , \; \{r_m^{'}\}_{m = 1}^{\infty} \subset \{r_m\}_{m = 1}^{\infty}$ such that for each $(t,x) \in\RR \times \RR^K$,
$$ \underset{m \to \infty}{\lim}\underset{n \to \infty}{\lim}f(t + s_n^{'} + r_m^{'},x) = \underset{n \to \infty}{\lim}f(t + s_n^{'} + r_n^{'},x).
$$
\end{lemma}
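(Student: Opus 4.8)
\emph{Proof strategy.} The statement is the Bochner double-limit (``normality'') characterization of almost periodicity, in the form parametrized by $x\in E$. The plan is to reduce both implications to the basic Bochner equivalence: a bounded, uniformly continuous $f$ on $\RR\times E$ is almost periodic in $t$ uniformly with respect to $x\in E$ if and only if its family of time-translates $\{f_\tau:=f(\cdot+\tau,\cdot)\mid\tau\in\RR\}$ is relatively compact in $C_{\rm unif}^b(\RR\times E)$ with the sup-norm $\norminfty{\cdot}$; equivalently, every sequence of time-translates of $f$ has a $\norminfty{\cdot}$-convergent subsequence. Throughout I would use that $E\subset\RR^K$ is separable and that, $f$ being bounded and uniformly continuous, all its time-translates are uniformly bounded with a common modulus of continuity.

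\smallskip
\noindent\emph{Necessity ($\Rightarrow$).} Assuming $f$ almost periodic in $t$ uniformly with respect to $x$ and given $\{s_n\}$, $\{r_m\}$, I would first use precompactness of $\{f_\tau\}$ to extract $\{s_n'\}\subset\{s_n\}$ with $f_{s_n'}\to f^*$ in $\norminfty{\cdot}$; since the class of functions almost periodic in $t$ uniformly with respect to $x$ is $\norminfty{\cdot}$-closed and contains each $f_{s_n'}$, the limit $f^*$ again has precompact time-translates, so I can extract $\{r_m'\}\subset\{r_m\}$ with $f^*_{r_m'}\to g$ in $\norminfty{\cdot}$. Then for each fixed $m$ uniform convergence gives $\lim_n f(t+s_n'+r_m',x)=f^*(t+r_m',x)$, and letting $m\to\infty$ the left-hand iterated limit equals $g(t,x)$; for the diagonal, translation invariance of $\norminfty{\cdot}$ gives $\norminfty{f_{s_n'+r_n'}-g}\le\norminfty{f_{s_n'}-f^*}+\norminfty{f^*_{r_n'}-g}\to 0$, so the diagonal limit is also $g(t,x)$. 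This direction is essentially bookkeeping.

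\smallskip
\noindent\emph{Sufficiency ($\Leftarrow$).} Assuming $f$ bounded, uniformly continuous and satisfying the double-limit condition, the goal is to show $\{f_\tau\}$ is $\norminfty{\cdot}$-precompact. Given a sequence $\{s_n\}$, a Cantor diagonal argument over a countable dense $Q\subset\RR\times E$ (using $\norminfty{f}<\infty$) yields $\{s_n'\}\subset\{s_n\}$ along which $f(\cdot+s_n',\cdot)$ converges at every point of $Q$; uniform continuity then upgrades this to pointwise convergence on all of $\RR\times E$ to a bounded uniformly continuous $f^*$. The crux is to show the convergence is \emph{uniform}. If it failed there would be $\epsilon_0>0$, indices $n_j<m_j\to\infty$ and points $(t_j,x_j)$ with $|f(t_j+s_{n_j}',x_j)-f(t_j+s_{m_j}',x_j)|\ge\epsilon_0$, i.e.\ a sequence of translates of $f$ that is $\epsilon_0$-separated in $\norminfty{\cdot}$. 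From such a separated family I would run the classical Bochner construction: using the ``far apart'' translation amounts together with the separating points $(t_j,x_j)$, build two sequences $\{s_n\},\{r_m\}$ for which, no matter which subsequences $\{s_n'\},\{r_m'\}$ one picks, the iterated limit $\lim_m\lim_n f(\cdot+s_n'+r_m',\cdot)$ and the diagonal limit $\lim_n f(\cdot+s_n'+r_n',\cdot)$ are forced to disagree by at least $\epsilon_0/2$ at a suitable point, contradicting the hypothesis. Hence $f_{s_n'}\to f^*$ uniformly; as $\{s_n\}$ was arbitrary, $\{f_\tau\}$ is precompact, so $f$ is almost periodic in $t$ uniformly with respect to $x$.

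\smallskip
\noindent\emph{Main obstacle.} I expect the hard part to be exactly this last point in the sufficiency direction: upgrading pointwise (in $(t,x)$) convergence of a translate subsequence to uniform convergence on the unbounded set $\RR\times E$. The full strength of the double-limit equality (iterated limit $=$ diagonal limit for \emph{independently} chosen subsequences of the two sequences) is precisely what rules out ``almost automorphic but not almost periodic'' behaviour, and producing the contradiction needs the careful lacunary, separated-translates construction of Bochner, carried out here in the parametrized setting, which is where separability of $E\subset\RR^K$ and the uniform continuity hypothesis both enter. The scalar version of the lemma is classical; the version uniform in $x$ follows by the same argument, or, alternatively, by regarding $t\mapsto f(t,\cdot)$ as a Bohr-almost-periodic function valued in the Banach space $C_{\rm unif}^b(E)$ and invoking the Banach-space-valued Bochner theorem (after using uniform continuity to pass from the pointwise-in-$(t,x)$ hypothesis to its $C_{\rm unif}^b(E)$-valued form).
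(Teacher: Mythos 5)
The paper does not actually prove this lemma: its entire ``proof'' is the citation ``See \cite[Theorems 1.17 and 2.10]{fink}'', i.e.\ it treats the statement as the classical Bochner double-limit criterion for (parameter-uniform) almost periodicity and defers to Fink's book. Your outline is precisely the standard argument behind that citation, so in substance you are reproducing the cited proof rather than taking a different route. The necessity direction as you describe it is complete and correct (precompactness of the translate family, closedness of the almost periodic class under uniform limits, and the triangle inequality for the diagonal). In the sufficiency direction your reduction --- diagonal extraction over a countable dense set, upgrade to pointwise convergence via the common modulus of continuity, and a contradiction from an $\epsilon_0$-separated family of translates --- is the right skeleton, but the one genuinely nontrivial step, namely manufacturing from the separated translates a pair of sequences that violates the iterated-limit-equals-diagonal-limit hypothesis for \emph{every} choice of subsequences, is left as ``the classical Bochner construction''. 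That is exactly the content of Fink's Theorem 1.17, so nothing is wrong, but be aware that your write-up, like the paper's, ultimately leans on the classical source for the hard combinatorial step; your alternative suggestion of viewing $t\mapsto f(t,\cdot)$ as a $C_{\rm unif}^b(E)$-valued Bohr almost periodic function and invoking the Banach-space-valued Bochner theorem is a clean way to make the reduction honest.
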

\begin{proof}
See \cite[Theorems 1.17 and 2.10 ]{fink}.
\end{proof}

\begin{definition}
For an almost periodic function $a(t,x)$ in $t$, the value
$$\hat a(x) := \lim_{t\to\infty} { \frac{1}{t}}\int_{0}^{t} a(t,x) dt. $$
is called the mean value of $a$
\end{definition}

\begin{proposition}
\label{almost-periodic-prop}
\begin{itemize}
\item[(1)] If $f(t,x)$ is almost periodic in $t$ uniformly with respect to $x\in E$, then for any sequence $\{t_n\}\subset\RR$, there is a subsequence $\{t_{n_k}\}$ such that the limit $\lim_{k\to\infty} f(t+t_{n_k},x)$ exists uniformly in
    $(t,x)\in\RR\times E$.

\item[(2)] If $f(t,x)$ is almost periodic in $t$ uniformly with respect to $x\in E$, then
 the limit
 $$\hat f(x):=\lim_{T\to\infty}\frac{1}{T}\int_0^T f(t,x)dt
 $$
   exists uniformly with respect to $x\in E$.
 If $E=\RR^N$  and for each $1\le i\le N$, $f(t,x_1,x_2,\cdots,x_N)$ is also almost periodic in $x_i$ uniformly with respect to
 $t\in\RR$  and $x_j\in\RR$ for $1\le j\le N$, $j\not =i$, then  the limit
 $$
 \bar f:=\lim_{q_1,q_2,\cdots,q_N\to \infty}\frac{1}{q_1 q_2\cdots q_N}\int_0^{q_N}\cdots\int_0^{q_2}\int_0^{p_N} \hat f(x_1,x_2,\cdots,x_N)dx_1dx_2\cdots  dx_N$$
  exists.

\end{itemize}
\end{proposition}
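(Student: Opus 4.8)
\textbf{Proof proposal for Proposition \ref{almost-periodic-prop}.}
The plan is to reduce both statements to two classical facts about uniformly almost periodic functions — Bochner's normality criterion (which is exactly statement (1)) and Bohr's mean value theorem — while tracking carefully that every estimate can be taken uniform in $x\in E$. The single ingredient that makes the ``uniform in $x$'' part work is that, under the hypotheses of Definition \ref{almost-periodic-def}(2) (uniform continuity on $\RR\times E$ together with pointwise-in-$x$ almost periodicity), the set $T(f(\cdot,x),\epsilon)$ of $\epsilon$-almost periods is relatively dense with a density constant $\ell(\epsilon)$ that can be chosen independent of $x$; this is the point I expect to require the most care, and it is proved in \cite{fink} (the same source already cited for Lemma \ref{ptw def}).

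\textbf{Part (1).} I would first recall the scalar Bochner criterion: for an almost periodic $g:\RR\to\RR$, every sequence of translates $\{g(\cdot+t_n)\}$ has a subsequence converging uniformly on $\RR$. Since $E\subset\RR^N$ is separable, fix a countable dense set $\{x_j\}_{j\ge1}\subset E$. Applying the scalar criterion to $f(\cdot,x_1)$, then along the resulting subsequence to $f(\cdot,x_2)$, and so on, a diagonal extraction produces a single subsequence $\{t_{n_k}\}$ along which $f(\cdot+t_{n_k},x_j)$ converges uniformly on $\RR$ for every $j$. To upgrade this to uniform convergence on $\RR\times E$, use that uniform continuity of $f$ gives, for each $\epsilon>0$, a $\delta>0$ with $|f(t,x)-f(t,x')|<\epsilon$ for all $t$ whenever $|x-x'|<\delta$; together with the Cauchy property along $\{x_j\}$ this yields a Cauchy estimate for $\{f(\cdot+t_{n_k},x)\}_k$ uniform in $t$, once one knows the Cauchy threshold along $\{x_j\}$ can be made uniform in $j$. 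The last point is the main technical obstacle; it is handled by observing that the family $\{f(\cdot+t_{n_k},\cdot)\}_k$ is uniformly bounded and uniformly equicontinuous with an almost periodic (uniform) limit, so an Ascoli-type argument forces the convergence to be uniform in $(t,x)$. All of this is classical and carried out in \cite{fink}.

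\textbf{Part (2).} For fixed $x$, existence of $\hat f(x)=\lim_{T\to\infty}\frac1T\int_0^T f(t,x)\,dt$ is Bohr's mean value theorem: with $M_T(a,x)=\frac1T\int_a^{a+T}f(t,x)\,dt$, for any $\epsilon$-almost period $\tau$ of $f(\cdot,x)$ one has $|M_T(\tau,x)-M_T(0,x)|<\epsilon$, and relative density of $\epsilon$-almost periods makes $M_T(0,x)$ Cauchy as $T\to\infty$ via a tiling argument; the estimates involved use only $\|f\|_\infty$ and the density constant $\ell(\epsilon)$, which by the remark in the first paragraph is uniform in $x$, so the convergence is uniform in $x\in E$. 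For the spatial mean when $E=\RR^N$, I would first verify that $\hat f(x)$ is almost periodic in each $x_i$ uniformly in the remaining variables: $\hat f$ is uniformly continuous and inherits $\epsilon$-almost periods in $x_i$ from $f$, since averaging in $t$ does not enlarge oscillations. Then apply the mean value theorem just established in the variable $x_1$, obtaining that $\frac1{q_1}\int_0^{q_1}\hat f(x_1,\dots,x_N)\,dx_1$ converges as $q_1\to\infty$, uniformly in $(x_2,\dots,x_N)$, to a function still almost periodic in each of $x_2,\dots,x_N$; iterating over $x_2,\dots,x_N$ yields the iterated limit, and uniformity of each successive mean shows that the simultaneous limit $\lim_{q_1,\dots,q_N\to\infty}\frac1{q_1\cdots q_N}\int_0^{q_N}\cdots\int_0^{q_1}\hat f\,dx_1\cdots dx_N$ exists and coincides with it.
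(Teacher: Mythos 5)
Your proposal is correct and follows essentially the same route as the paper, which proves both parts simply by citing the classical theory in \cite{fink} (Theorems 2.7 and 3.1 there are exactly Bochner's normality criterion and Bohr's mean value theorem in the uniform-in-parameter setting). Your sketch fills in the standard arguments and correctly isolates the one genuinely delicate point --- that the relative density constant $\ell(\epsilon)$ of the $\epsilon$-almost periods can be chosen independent of $x\in E$ --- which you, like the paper, defer to \cite{fink}.
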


\begin{proof}
(1) It follows from \cite[Theorem 2.7]{fink}

(2) It follows from \cite[Theorem 3.1]{fink}

\end{proof}

\begin{proposition}\label{mod cont}
Let $f, \; g \in C(\R \times \R^N, \R)$ be two almost periodic functions in $t$ uniformly with respect to $x$ in bounded sets. $\mathcal{M}(g) \subset \mathcal{M}(f)$ if and only if for any sequence $\{t_n\} \subset \R$, if $\underset{n\to\infty}{\lim}f(t+t_n,x) = f(t,x)$ uniformly for $t\in \R$ and $x$ in bounded sets, then there is $\{t_{n_k}\}$ a subsequence of $\{t_n\}$ such that $\underset{k\to\infty}{\lim}g(t+t_{n_k},x) = g(t,x)$ uniformly for $t\in \R$ and $x$ in bounded sets.
\end{proposition}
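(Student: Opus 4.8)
\medskip

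\noindent The plan is to adapt the classical module containment theorem for scalar almost periodic functions (see \cite{fink}) to the present setting, carrying out the standard Fourier/Bochner argument while tracking the dependence on the spatial variable $x$. For $\lambda\in\R$ I will write $\hat f(x,\lambda)=\lim_{T\to\infty}\frac1T\int_0^T f(t,x)e^{-i\lambda t}\,dt$; applying Proposition \ref{almost-periodic-prop}(2) to the almost periodic function $(t,x)\mapsto f(t,x)e^{-i\lambda t}$ shows this limit exists uniformly for $x$ in bounded sets, and by Definition \ref{almost-periodic-def}(5) the module $\mathcal{M}(f)$ is the additive subgroup of $\R$ generated by the Fourier exponents of $f$, i.e. those $\lambda$ with $\hat f(\cdot,\lambda)\not\equiv 0$ --- a countable set (for fixed $x$ it is the countable spectrum of $f(\cdot,x)$, and $\hat f(\cdot,\lambda)$ is continuous), so $\mathcal{M}(f)$ is countable, and likewise for $g$. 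The two recurring tools will be: \textbf{(a)} if $h(\cdot+t_n,\cdot)\to\tilde h$ uniformly for $t\in\R$ and $x$ in bounded sets, then $\hat{\tilde h}(x,\lambda)=\lim_n e^{i\lambda t_n}\hat h(x,\lambda)$ for all $x,\lambda$, so whenever $\hat h(x_0,\lambda)\neq 0$ the numbers $e^{i\lambda t_n}$ converge; and \textbf{(b)} Bochner normality (Proposition \ref{almost-periodic-prop}(1), plus a diagonal argument over an exhaustion of $\R^N$ by balls) together with the metrizability of uniform convergence on $\R\times$(bounded sets), so a sequence of translates converges iff every subsequence has a further subsequence converging to the same limit. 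Combining (a), (b), and uniqueness of Fourier series of almost periodic functions (applied for each fixed $x$) gives the key reduction I intend to use both ways: \emph{$f(\cdot+t_n,\cdot)\to f$ uniformly on $\R\times$(bounded sets) if and only if $e^{i\lambda t_n}\to 1$ for every Fourier exponent $\lambda$ of $f$.}

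\medskip

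\noindent For the forward implication I would assume $\mathcal{M}(g)\subset\mathcal{M}(f)$ and take $\{t_n\}$ with $f(\cdot+t_n,\cdot)\to f$ uniformly for $t\in\R$ and $x$ in bounded sets. Using (b) I pass to a subsequence $\{t_{n_k}\}$ along which $g(\cdot+t_{n_k},\cdot)\to h$ uniformly, with $h$ again almost periodic in $t$ uniformly with respect to $x$ in bounded sets. By the key reduction, $e^{i\mu t_{n_k}}\to 1$ for every Fourier exponent $\mu$ of $f$; hence, writing an arbitrary Fourier exponent $\lambda$ of $g$ (which lies in $\mathcal{M}(g)\subset\mathcal{M}(f)$) as $\lambda=\sum_j m_j\mu_j$ with $m_j\in\ZZ$ and each $\mu_j$ a Fourier exponent of $f$, I obtain $e^{i\lambda t_{n_k}}=\prod_j\bigl(e^{i\mu_j t_{n_k}}\bigr)^{m_j}\to 1$. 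Then (a) applied to $g$ yields $\hat h(x,\lambda)=\hat g(x,\lambda)$ for all $x$ and all $\lambda$ (trivially when $\hat g(\cdot,\lambda)\equiv 0$), so by uniqueness of Fourier series $h(\cdot,x)=g(\cdot,x)$ for every $x$, i.e. $h\equiv g$; this $\{t_{n_k}\}$ is the subsequence the statement asks for.

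\medskip

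\noindent For the converse I would argue by contraposition: assume $\mathcal{M}(g)\not\subset\mathcal{M}(f)$, so some Fourier exponent $\lambda_0$ of $g$ is not in $\mathcal{M}(f)$, and fix $x_0$ with $\hat g(x_0,\lambda_0)\neq 0$. The goal is a sequence $\{t_n\}$ with $e^{i\mu t_n}\to 1$ for all $\mu\in\mathcal{M}(f)$ and $e^{i\lambda_0 t_n}\to c$ for some $c\neq 1$. For this I would work in the countable discrete abelian group $D=\mathcal{M}(f)+\ZZ\lambda_0$ with compact, metrizable dual $\widehat D$: the evaluations $e_\alpha:\lambda\mapsto e^{i\lambda\alpha}$ ($\alpha\in\R$) form a subgroup of $\widehat D$ whose closure is all of $\widehat D$ (otherwise its annihilator in $D$ would contain a nonzero $d$ with $e^{id\alpha}=1$ for all $\alpha\in\R$, which is impossible), while pulling back along $D\to D/\mathcal{M}(f)$ the character of the nontrivial cyclic group $D/\mathcal{M}(f)$ that sends the coset of $\lambda_0$ to some $c\neq 1$ produces $\chi\in\widehat D$ with $\chi|_{\mathcal{M}(f)}\equiv 1$ and $\chi(\lambda_0)=c\neq 1$; choosing $t_n$ with $e_{t_n}\to\chi$ in $\widehat D$ then does the job. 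By the key reduction $f(\cdot+t_n,\cdot)\to f$ uniformly on $\R\times$(bounded sets), so the hypothesis of the proposition furnishes a subsequence $\{t_{n_k}\}$ with $g(\cdot+t_{n_k},\cdot)\to g$ uniformly; but then (a) at $x_0,\lambda_0$ forces $\hat g(x_0,\lambda_0)=c\,\hat g(x_0,\lambda_0)$, hence $c=1$, a contradiction. Therefore $\mathcal{M}(g)\subset\mathcal{M}(f)$.

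\medskip

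\noindent I expect the main obstacle to be the construction of the separating sequence in the converse, i.e. the Pontryagin/Kronecker argument on $D=\mathcal{M}(f)+\ZZ\lambda_0$ producing a \emph{single} real sequence $\{t_n\}$ that drives every Fourier exponent of $f$ to $1$ while keeping $e^{i\lambda_0 t_n}$ away from $1$. Everything else is bookkeeping: the Fourier computations are routine, and the uniformity in $x$ is controlled once and for all by Proposition \ref{almost-periodic-prop}(2), since $\mathcal{M}(g)$ only ``sees'' the $\lambda$ for which $\hat g(\cdot,\lambda)$ fails to vanish identically, so one never needs more than the existence of a single $x_0$ realizing a given Fourier exponent.
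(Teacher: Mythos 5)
Your proof is correct, but it is worth noting that the paper does not actually prove this proposition at all: its ``proof'' is a one-line citation of Fink's module containment theorem (\cite[Theorem 4.5]{fink}). What you have written is, in effect, a self-contained proof of that cited theorem, adapted to the parameter-dependent setting, and the mechanics check out: the translation identity $\widehat{h(\cdot+t_n,\cdot)}(x,\lambda)=e^{i\lambda t_n}\hat h(x,\lambda)$ justifies your tool (a); Bochner normality plus uniqueness of Fourier series gives the ``key reduction'' in both directions (for $\lambda$ outside the spectrum both sides vanish, so the reduction is genuinely an equivalence); the forward implication then follows by writing each exponent of $g$ as an integer combination of exponents of $f$; and the contrapositive of the converse is handled by the Pontryagin/Kronecker construction on $D=\mathcal{M}(f)+\ZZ\lambda_0$, where the density of $\{e_\alpha\}$ in $\widehat D$ (trivial annihilator) and the existence of a character of the nontrivial cyclic quotient $D/\mathcal{M}(f)$ separating $\lambda_0$ are both sound. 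Your observation that continuity of $\hat f(\cdot,\lambda)$ in $x$ keeps the exponent set countable, and that one only ever needs a single $x_0$ realizing a given exponent, is exactly the point at which the classical scalar argument extends to uniformity in $x$ on bounded sets. The trade-off is the obvious one: the paper's citation buys brevity at the cost of leaving the reader to verify that Fink's scalar statement really does transfer to functions of $(t,x)$ with the required uniformity, whereas your argument makes that transfer explicit; the price is the extra machinery (duality on a countable discrete group) needed for the separating sequence, which is also where the real content of the theorem lives.
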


\begin{proof}
See \cite[Theorem 4.5]{fink}
\end{proof}
\bk

\subsection{Comparison principle}

In this subsection, we introduce super- and sub-solutions of \eqref{main-nonlinear-eq} in some general sense and present
a comparison principle and some related  properties for  solutions of \eqref{main-nonlinear-eq}.

 Recall that,  for any $s\in\RR$ and $u_0\in X$,  $u(t,x;s,u_0)$ denotes the unique solution of  \eqref{main-nonlinear-eq}  with $u(s,x;s,u_0)=u_0(x)$. Let  $T_{\max}(s,u_0)\in (0,\infty]$ be the largest number such that $u(t,x;s,u_0)$ exists on $[s,s+T_{\max}(s,u_0))$. To indicate the dependence of $u(t,x;s,u_0)$ on $D$,
we may write it as $u(t,x;s,u_0,D)$.

\begin{definition}
A continuous function $u(t,x)$ on $[t_0,t_0+\tau)\times\bar D$ is called a {\rm  super-solution (or sub-solution)} of \eqref{main-nonlinear-eq} on $[t_0,t_0+\tau)$  if for any $x \in \bar D$,  { $u(\cdot,x)\in W^{1,1}(t_0, t_0+\tau)$}, and satisfies,
\begin{equation}
\label{super-sub-solution-eq}
\frac{\partial u}{\partial t}(t,x) \geq (or \leq)
\int_D \kappa(y-x)u(t,y)dy + u(t,x)f(t,x,u)\quad {a.e. \,\, t\in (t_0, t_0+\tau).}
\end{equation}
\end{definition}

{Note  that, in literature, super-solutions  (or sub-solutions) of \eqref{main-nonlinear-eq} on $[t_0, t_0+\tau)$  are defined to be functions $u(\cdot,\cdot)\in C^{1,0}([t_0, t_0+\tau)\times \bar D)$ satisfying \eqref{super-sub-solution-eq} for all $t\in (t_0,t_0+\tau)$ and $x\in\bar D$.  Super-solutions (sub-solutions) of \eqref{main-nonlinear-eq} defined  in the above are more general. Nevertheless, we still have the following
comparison principle.
}

\begin{proposition}
\label{comparison-prop}
(Comparison Principle)
\begin{itemize}
    \item[(1)]  If $u^1(t,x)$ and $u^2(t,x)$ are bounded { sub and super-solutions} of \eqref{main-nonlinear-eq} on $[0, \tau)$ and $u^1(0,\cdot) \leq u^2(0,\cdot)$, then $u^1(t,\cdot) \leq u^2(t,\cdot)$ for $t \in [0,\tau)$.

    \item[(2)]   For every $u_0 \in X^+$, $u(t,x;s,u_0)$ exists for all  $t \geq s.$
\end{itemize}
\end{proposition}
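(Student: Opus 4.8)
The plan is to establish the comparison inequality (1) via a Gronwall estimate on the negative part of the difference of the two functions, and then to deduce the global existence in (2) by trapping $u(t,\cdot;s,u_0)$ between the trivial solution and a large constant, which is a super-solution thanks to (H2). For (1), set $w:=u^2-u^1$, so that $w(0,\cdot)\ge 0$, and fix an arbitrary $\tau'<\tau$. Writing $g(t,x,u):=u\,f(t,x,u)$ and using that $u^1,u^2$ are bounded, the mean value theorem in the last variable gives
\[
g(t,x,u^2)-g(t,x,u^1)=c(t,x)\,w(t,x),\qquad c(t,x):=\int_0^1 g_u\big(t,x,u^1+\sigma(u^2-u^1)\big)\,d\sigma,
\]
and by (H2) the functions $f$ and $f_u$, hence $g_u=f+u\,f_u$, are bounded on $\RR\times\bar D\times E$ for every bounded $E$; therefore $|c(t,x)|\le L$ for a constant $L$ depending only on a uniform bound for $u^1$ and $u^2$. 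Subtracting the sub-solution inequality for $u^1$ from the super-solution inequality for $u^2$ then yields, for each fixed $x\in\bar D$ and a.e.\ $t\in(0,\tau')$,
\[
\partial_t w(t,x)\ \ge\ \int_D\kappa(y-x)\,w(t,y)\,dy+c(t,x)\,w(t,x).
\]

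Since each $u^i(\cdot,x)$ lies in $W^{1,1}(0,\tau')$ --- this is precisely the point at which the $W^{1,1}_{\mathrm{loc}}$ hypothesis on the comparison functions enters --- so does $w(\cdot,x)$, and the chain rule shows $w^-(\cdot,x):=\max\{-w(\cdot,x),0\}\in W^{1,1}(0,\tau')$ with $\partial_t w^-(t,x)=-\partial_t w(t,x)\,\mathbf{1}_{\{w(t,x)<0\}}$ for a.e.\ $t$. On the set where $w(t,x)<0$ the last displayed inequality, combined with $-w(t,y)\le w^-(t,y)\le\|w^-(t,\cdot)\|$, with $\int_D\kappa(y-x)\,dy\le 1$, and with $|c|\le L$, gives $\partial_t w^-(t,x)\le(1+L)\,\|w^-(t,\cdot)\|$; on the complementary set $\partial_t w^-(t,x)=0$ a.e. Integrating in $t$, using $w^-(0,\cdot)\equiv 0$ (because $u^1(0,\cdot)\le u^2(0,\cdot)$), and taking the supremum over $x\in\bar D$ produces
\[
\|w^-(t,\cdot)\|\ \le\ (1+L)\int_0^t\|w^-(s,\cdot)\|\,ds,\qquad t\in[0,\tau'),
\]
with $s\mapsto\|w^-(s,\cdot)\|$ bounded and measurable. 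Gronwall's inequality forces $w^-\equiv 0$ on $[0,\tau')$, and since $\tau'<\tau$ was arbitrary we conclude $u^1\le u^2$ on $[0,\tau)$, which is (1).

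For (2), observe that $u\equiv 0$ is a global solution, hence at once a sub- and a super-solution, so (1) applied with $u_0\ge 0$ gives $u(t,\cdot;s,u_0)\ge 0$ on the maximal interval of existence $[s,s+T_{\max}(s,u_0))$. For an upper bound, (H2) furnishes $K>0$ with $f(t,x,u)+1<0$ for all $(t,x)$ and all $u\ge K$; then for any constant $C\ge\max\{K,\|u_0\|\}$ one has
\[
0=\partial_t C\ \ge\ C\Big(\int_D\kappa(y-x)\,dy+f(t,x,C)\Big)\ \ge\ \int_D\kappa(y-x)\,C\,dy+C\,f(t,x,C),
\]
because $\int_D\kappa\le 1$ and $f(t,x,C)<-1$; thus the constant function $C$ is a bounded super-solution with $u_0\le C$. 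Applying (1) once more (after a harmless time shift) gives $0\le u(t,\cdot;s,u_0)\le C$ throughout $[s,s+T_{\max}(s,u_0))$, so the solution stays bounded in $X$, and the continuation alternative from the general semigroup theory (\cite{Paz}) forces $T_{\max}(s,u_0)=\infty$.

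The main technical obstacle is the low regularity in the time variable: sub- and super-solutions, as well as the solutions themselves, are only assumed to belong to $W^{1,1}$ in $t$, and since $\bar D$ may be unbounded one cannot argue at an interior extremal point as in the classical maximum-principle proof. This is why the argument proceeds through the $L^\infty$-valued differential inequality for $w^-$ and relies on the chain rule and the fundamental theorem of calculus for $W^{1,1}$ functions --- exactly the regularity assumption on the comparison functions flagged in the text following \eqref{Lambda-PE-set}--\eqref{Lambda-PE-prime-set}.
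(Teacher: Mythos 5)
Your proof is correct, and both parts reach the same conclusions as the paper by closely related means. For (1), you and the paper both reduce to a linear nonlocal comparison inequality via the mean value theorem, and both lean on the $W^{1,1}$-in-$t$ hypothesis to invoke the fundamental theorem of calculus. Where you differ is in execution: you run a Gronwall estimate directly on $\|w^-(t,\cdot)\|_\infty$, using the Sobolev chain rule for $w^-=\max\{-(u^2-u^1),0\}$ and the kernel bound $\int_D\kappa\le 1$, which closes the argument in a self-contained way. The paper instead normalizes by $v=e^{ct}(u^2-u^1)$ so that the zeroth-order coefficient $p(t,x)=a(t,x)+c$ becomes strictly positive, passes to the integral form, and then defers the conclusion to the argument of Proposition~2.1 in \cite{ShZh1}. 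Your route avoids the external reference and makes transparent exactly where the $W^{1,1}$ regularity of the comparison functions is needed --- the very point flagged in the text after \eqref{Lambda-PE-set}--\eqref{Lambda-PE-prime-set}. One detail worth recording explicitly: $t\mapsto\|w^-(t,\cdot)\|_\infty$ is lower semicontinuous as a pointwise supremum of continuous functions, hence measurable, and it is bounded on $[0,\tau']$ because the sub- and super-solutions are bounded; this is what makes the Gronwall step legitimate. Part (2) is essentially verbatim the paper's argument: $u\equiv 0$ is a solution and, by (H2), any constant $C\gg 1$ is a super-solution, so $0\le u(t,\cdot;s,u_0)\le C$ on the maximal interval of existence and the continuation alternative gives global existence.
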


\begin{proof}
(1)
{Set $v(t,x) = e^{ct}(u^2(t,x) - u^1(t,x))$. Then  for each $x\in\bar D$, $v(t,x)$ satisfies
\begin{equation}\label{veqn}
\frac{\partial v}{\partial t} \ge \int_D \kappa(y-x)v(t,y)dy+p(t,x)v(t,x) ~~\text{for}~~
a.e. \;\;t\in [0,\tau),
\end{equation}
where $p(t,x)=a(t,x) + c$,
$$a(t,x)=\int_0^ 1 \frac{\p }{\p s} \Big( (su^2(t,x)+(1-s)u^1(t,x))f(t,x,  su^2(t,x)+(1-s)u^1(t,x))\Big) ds,
$$  and  $c>0$ is  such that $p(t,x) >0$ for all $t \in \RR$ and  $x \in D$. Since $u^i(\cdot,x)\in W^{1,1}(0,\tau)$ for each $x\in\bar D$,  by \cite[Theorem 2, Section 5.9]{Eva}, we have that
\begin{align*}
v(t,x)-v(0,x)&=\int_0^t v_t(s,x)ds\\
&\ge \int_0 ^t   \Big(\int_D \kappa(y-x)v(s,y)dy+p(s,x)v(s,x)\Big)ds
 \quad \forall\, t\in (0,\tau),\,\, x\in\bar D.
\end{align*}
The rest of the proof follows  from the arguments in Proposition 2.1 of \cite{ShZh1}.}

(2) Note that $u\equiv 0$ is an entire solution of \eqref{main-nonlinear-eq} and $u\equiv M$ is a super-solution of \eqref{main-nonlinear-eq} when $M\gg 1$. By (1),
$$
0\le u(t,x;s,u_0)\le M\quad \forall\,\, t\in [s,s+T_{\max}(s,u_0)),\,\, x\in\bar D,\,\, M\gg 1.
$$
This implies that $T_{\max}(s,u_0)=\infty$ and (2) follows.
\end{proof}

\begin{proposition}
\label{new-comparision-prop}
 Let $D_0\subset D$.    Then
$$ u(t,x;s,u_0|_{D_0},D_0)\le u(t,x;s,u_0,D)\quad \forall t\ge s, \,\, x\in \bar D_0,
$$
where $u_0\in C_{\rm unif}^b(\bar D)$, $u_0\ge 0$.
\end{proposition}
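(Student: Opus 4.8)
\textbf{Proof proposal for Proposition \ref{new-comparision-prop}.}

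The plan is to exhibit $u(t,x;s,u_0|_{D_0},D_0)$, suitably extended to $\bar D$, as a sub-solution of \eqref{main-nonlinear-eq} on the larger domain $D$, and then invoke the comparison principle (Proposition \ref{comparison-prop}(1)). Write $v(t,x) := u(t,x;s,u_0|_{D_0},D_0)$ for $x\in\bar D_0$; this is the global solution on $D_0$, which exists for all $t\ge s$ by Proposition \ref{comparison-prop}(2) applied with $D_0$ in place of $D$, and it is nonnegative since $u_0|_{D_0}\ge 0$. Extend $v$ to all of $\bar D$ by setting $\tilde v(t,x)=v(t,x)$ for $x\in\bar D_0$ and $\tilde v(t,x)=0$ for $x\in\bar D\setminus\bar D_0$. (One should remark that $\tilde v(t,\cdot)$ need not lie in $X(D)$ because of a possible jump across $\p D_0$; this is harmless, since the comparison argument in Proposition \ref{comparison-prop}(1), via Proposition 2.1 of \cite{ShZh1}, only uses boundedness, nonnegativity, the $W^{1,1}$-in-$t$ regularity for each fixed $x$, and the differential inequality — alternatively one restricts attention to test points in $\bar D_0$.)

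First I would check the sub-solution inequality for $\tilde v$ on $D$. For $x\in\bar D_0$ and a.e.\ $t>s$,
\begin{align*}
\p_t \tilde v(t,x) &= \p_t v(t,x) = \int_{D_0}\kappa(y-x)v(t,y)\,dy + v(t,x)f(t,x,v(t,x))\\
&\le \int_{D_0}\kappa(y-x)\tilde v(t,y)\,dy + \tilde v(t,x)f(t,x,\tilde v(t,x))\\
&\le \int_{D}\kappa(y-x)\tilde v(t,y)\,dy + \tilde v(t,x)f(t,x,\tilde v(t,x)),
\end{align*}
where the last inequality uses $\kappa\ge 0$, $\tilde v\ge 0$, and $D_0\subset D$ so that $\int_{D\setminus D_0}\kappa(y-x)\tilde v(t,y)\,dy\ge 0$. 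For $x\in\bar D\setminus\bar D_0$ one has $\tilde v(t,x)\equiv 0$, hence $\p_t\tilde v(t,x)=0$, while the right-hand side equals $\int_D\kappa(y-x)\tilde v(t,y)\,dy\ge 0$; so the sub-solution inequality holds there as well. Thus $\tilde v$ is a bounded sub-solution of \eqref{main-nonlinear-eq} on $[s,\infty)$ (after the trivial time shift to start at $0$).

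Next, $u(t,x;s,u_0,D)$ is a (super-)solution of \eqref{main-nonlinear-eq} on $D$ for $t\ge s$, again global and nonnegative by Proposition \ref{comparison-prop}(2). At the initial time, $\tilde v(s,x) = u_0(x)$ for $x\in\bar D_0$ and $\tilde v(s,x)=0\le u_0(x)$ for $x\in\bar D\setminus\bar D_0$, so $\tilde v(s,\cdot)\le u_0 = u(s,\cdot;s,u_0,D)$ on $\bar D$. Applying Proposition \ref{comparison-prop}(1) to the sub-solution $\tilde v$ and the solution $u(\cdot,\cdot;s,u_0,D)$ gives $\tilde v(t,\cdot)\le u(t,\cdot;s,u_0,D)$ for all $t\ge s$; restricting to $\bar D_0$ yields exactly $u(t,x;s,u_0|_{D_0},D_0)\le u(t,x;s,u_0,D)$ for all $t\ge s$ and $x\in\bar D_0$.

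The only genuine subtlety — the main obstacle — is the regularity mismatch: the zero-extension $\tilde v$ is generally discontinuous across $\p D_0$, so it is not literally a sub-solution in the sense of the Definition preceding Proposition \ref{comparison-prop}, which requires continuity on $[t_0,t_0+\tau)\times\bar D$. I would handle this either by (i) observing that the proof of the comparison principle does not actually use continuity in $x$ (only boundedness, nonnegativity, measurability, $W^{1,1}$ regularity in $t$, and the integral inequality, exactly as in Proposition 2.1 of \cite{ShZh1}), and re-running that argument for $\tilde v$; or (ii) approximating: replace $D_0$ by a slightly smaller domain, or mollify the extension and pass to the limit using continuous dependence of solutions on initial data. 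Option (i) is cleanest and is what I would present, with a one-line remark that the discontinuity of $\tilde v$ across $\p D_0$ is immaterial for the comparison argument.
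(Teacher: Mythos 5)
Your proof is correct in spirit but takes the opposite route from the paper, and the paper's route is noticeably cleaner. The paper does not extend the solution on the smaller domain to the larger one; instead it \emph{restricts} the solution $u(t,\cdot;s,u_0,D)$ on the larger domain to $\bar D_0$ and observes that, since $\kappa\ge0$ and $u\ge0$, dropping the contribution of $\int_{D\setminus D_0}\kappa(y-x)u(t,y)\,dy$ yields
$$\p_t u(t,x;s,u_0,D)\ge\int_{D_0}\kappa(y-x)u(t,y;s,u_0,D)\,dy+u(t,x;s,u_0,D)\,f\big(t,x,u(t,x;s,u_0,D)\big),\quad x\in\bar D_0,$$
so the restriction is a \emph{super}-solution of \eqref{main-nonlinear-eq} posed on $D_0$, with the same initial data $u_0|_{D_0}$ at time $s$; Proposition \ref{comparison-prop}(1) applied on $D_0$ then finishes it in one line. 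The key advantage is that restriction of a uniformly continuous bounded function to $\bar D_0$ trivially stays in $C^b_{\rm unif}(\bar D_0)$ with the same $W^{1,1}_{\rm loc}$-in-$t$ regularity, so the super-solution is literally admissible in the sense of the Definition preceding Proposition \ref{comparison-prop}, and there is nothing further to justify. By contrast, your zero-extension $\tilde v$ is generally discontinuous across $\p D_0$, so it fails the continuity hypothesis of that Definition, and you are forced into your option (i) (re-auditing the proof of the comparison principle to check that continuity in $x$ is inessential) or option (ii) (mollify-and-pass-to-the-limit). Option (ii) is safe but adds a genuine approximation argument; option (i) is an assertion about the internals of Proposition 2.1 of \cite{ShZh1} that you would need to actually verify rather than take on faith. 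So your argument can be made to work, but it spends effort on a regularity obstacle that the paper's choice of comparison domain eliminates entirely. When you have both a restriction route and an extension route available, restricting is almost always the lower-friction one for nonlocal problems, precisely because it preserves function-space membership for free.
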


\begin{proof}
Observe that $u(t,x;s,u_0,D)$ solves
\begin{eqnarray*}\label{comp eqn}
u_t &=& \int_{D}\kappa(y-x)u(t,y)dy+u(t,x)f(t,x,u),\quad x\in\bar D.\\
&\ge& \int_{D_0}\kappa(y-x)u(t,y)dy+u(t,x)f(t,x,u),\quad x\in\bar D_1.
\end{eqnarray*}
Since $u_0|_{D_0} \le u_0$
the inequality follows from Proposition \ref{comparison-prop}.
\end{proof}

For given $r>0$ and $x_0\in\RR^N$, let
$$
B_r(x_0)=\{x\in\RR^N\,|\, |x-x_0|<r\}.
$$

\begin{proposition}\label{new-prop1}
Let $ 0<\delta_0<1$ and $r_0>0$ be given positive numbers. Suppose that {\bf (H1)} holds. Then for any given positive integer $k$, there exist a positive number $\mu=\mu({r_0,\delta_0,k})$  and a positive integer $i=i({r_0,\delta_0,k})$ such that
\begin{equation}\label{Eq::1}
    \inf_{x\in B_{kr_0}(0)}\sum_{j=0}^{i}\frac{(\mathcal{K}^ju)(x)}{j!}\geq \mu \ \ \forall\  u\in L^{\infty}(\R^n), \ u\ge 0, \ \text{with}\ \int_{B_{r_0}(0)}u\,dx\ge \delta_0,
\end{equation}
where $\mathcal{K}u=\kappa*u$. In particular
$$
(e^{\mathcal{K}}u)(x)\geq \mu \quad \forall\ x\in B_{kr_0}(0).
$$
\end{proposition}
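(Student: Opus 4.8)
The plan is to reduce both conclusions to a single fact about convolution powers of $\kappa$. Write $\mathcal{K}^j u=\kappa^{*j}*u$, where $\kappa^{*j}$ denotes the $j$-fold convolution of $\kappa$; this is legitimate since, by \textbf{(H1)}, $\kappa\in L^1(\R^N)\cap L^\infty(\R^N)$ is nonnegative with $\|\kappa\|_{L^1}=1$, so each $\kappa^{*j}$ is a nonnegative bounded function with $\|\kappa^{*j}\|_{L^1}=1$, associativity of convolution gives $\mathcal{K}^j u=\kappa^{*j}*u$, and $\|\mathcal{K}^j u\|_\infty\le\|u\|_\infty$. The key lemma I would isolate is: \emph{for every $R>0$ there are a positive integer $m=m(R)$ and a constant $\eta=\eta(R)>0$ with $\kappa^{*m}(z)\ge\eta$ whenever $|z|\le R$.} Granting it, fix $k$, put $R=(k+1)r_0$, take $m,\eta$ from the lemma, and set $i=m$. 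For $x\in B_{kr_0}(0)$ and $y\in B_{r_0}(0)$ one has $|x-y|<(k+1)r_0=R$, so for every admissible $u$
\[
(\mathcal{K}^m u)(x)=\int_{\R^N}\kappa^{*m}(x-y)\,u(y)\,dy\ \ge\ \int_{B_{r_0}(0)}\kappa^{*m}(x-y)\,u(y)\,dy\ \ge\ \eta\int_{B_{r_0}(0)}u\,dy\ \ge\ \eta\delta_0 .
\]
Since $u\ge0$ and $\kappa\ge0$, every term $\mathcal{K}^j u$ is nonnegative, hence $\sum_{j=0}^{i}\frac{(\mathcal{K}^j u)(x)}{j!}\ge\frac{(\mathcal{K}^m u)(x)}{m!}\ge\frac{\eta\delta_0}{m!}=:\mu>0$ for all $x\in B_{kr_0}(0)$, which is \eqref{Eq::1}; and then $(e^{\mathcal{K}}u)(x)=\sum_{j\ge0}\frac{(\mathcal{K}^j u)(x)}{j!}\ge\sum_{j=0}^{i}\frac{(\mathcal{K}^j u)(x)}{j!}\ge\mu$, the discarded terms being nonnegative and the series converging uniformly because $\|\mathcal{K}^j u\|_\infty\le\|u\|_\infty$.

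For the key lemma the point is that each convolution with $\kappa$ enlarges the region of uniform positivity by a fixed amount. Since $\kappa$ is continuous with $\kappa(0)>0$, fix $\rho_0>0$ and $c_0>0$ with $\kappa\ge c_0$ on $\overline{B_{\rho_0}(0)}$. I would then show by induction on $m$ that $\kappa^{*m}\ge\eta_m>0$ on $\overline{B_{R_m}(0)}$ with $R_m=(m+1)\rho_0/2$: the base case $m=1$ is the choice of $\rho_0,c_0$, and for the step, given $|z|\le R_{m+1}=R_m+\rho_0/2$, pick a point $p$ on the segment $[0,z]$ for which $\overline{B_{\rho_0/4}(p)}\subset\overline{B_{\rho_0}(0)}\cap\overline{B_{R_m}(z)}$ — writing $|p|=t$, the two containments amount to $t\le 3\rho_0/4$ and $t\ge|z|-R_m+\rho_0/4$, and this interval of admissible $t$ is nonempty exactly when $|z|\le R_m+\rho_0/2$ — so that
\[
\kappa^{*(m+1)}(z)=\int_{\R^N}\kappa^{*m}(z-y)\,\kappa(y)\,dy\ \ge\ \eta_m c_0\,\bigl|\,\overline{B_{\rho_0}(0)}\cap\overline{B_{R_m}(z)}\,\bigr|\ \ge\ \eta_m c_0\,|B_{\rho_0/4}(0)|=:\eta_{m+1}.
\]
Since $R_m\to\infty$, for any $R$ we may choose $m$ with $R_m\ge R$, which proves the lemma. (Alternatively, $\kappa\ge c_0\mathbf{1}_{\overline{B_{\rho_0}(0)}}$ gives $\kappa^{*m}\ge c_0^m(\mathbf{1}_{\overline{B_{\rho_0}(0)}})^{*m}$, and the latter is continuous and strictly positive on the open ball $B_{m\rho_0}(0)$, hence bounded below by a positive constant on $\overline{B_R(0)}$ once $m\rho_0>R$.)

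The one step requiring care is this spreading lemma; once the elementary ball-geometry estimate above is set up it is routine, but it is the crux of the argument and the only place where the structural hypothesis $\kappa(0)>0$ from \textbf{(H1)} enters, and it is essential that the enlargement $R_{m+1}-R_m$ stays bounded below by a positive constant independent of $m$, so that every radius is eventually reached. Finally, tracing the construction shows that $i=m$ depends only on $r_0$ and $k$ (and on the fixed data $\kappa,N$), while $\mu=\eta\delta_0/m!$ depends only on $r_0,\delta_0,k$ (and $\kappa,N$), as required.
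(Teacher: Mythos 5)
Your proof is correct and follows essentially the same strategy as the paper's: both exploit the continuity and positivity of $\kappa$ near $0$ to show that each convolution with $\kappa$ enlarges the region of uniform positivity by a fixed radius, then iterate until the target ball $B_{kr_0}(0)$ is covered, and both use the same ball-geometry estimate (a small ball placed along the radial segment, lying in the intersection of a ball around $0$ with a ball around the target point). The mild difference is one of packaging: you isolate the spreading as a lemma about the convolution powers $\kappa^{*m}$ of the kernel itself, and only afterward apply it to $u$ via $\mathcal{K}^m u = \kappa^{*m} * u$, whereas the paper runs the same outward induction directly on $\mathcal{K}^{m+1}u$ over the annuli $\bar B_{(m+1)r}(0)\setminus B_{mr}(0)$. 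Your factoring is a cleaner separation of concerns (the kernel property versus its application) but is the same underlying argument, not a genuinely different route.
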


\begin{proof}
From ${\bf(H1)}$, we know that $\kappa$ is continuous and $\kappa(0) > 0$ so we can find $0<r <\frac{r_0}{2}$ such that $\kappa(x) \ge \frac{1}{2}\kappa(0)$ for every $x$ in $\bar{B}_{2r}(0)$. Now let $u\in L^{\infty}(\RR^N)$ be a nonnegative function satisfying $\int_{B_r(0)}u\,dx\ge \delta_0 .$
We claim that
\begin{equation}\label{Eq0}
    \inf_{x\in \bar{B}_{(m+1)r}(0)\setminus B_{mr}(0)}(\mathcal{K}^{m+1}u)(x)\geq \frac{[\delta_0\kappa(0)]^{m+1}}{2^{m+1}}\Pi_{i=1}^{m}\Big|B_{r}(ir{e_1})\cap B_{r}((i-1)re_1) \Big|\quad \forall\ m\ge 1
\end{equation}
where $e_1$ is the unit vector $(1,0,\dots,0) \in \RR^N.$

Observe from the definition of $r$ that
\begin{eqnarray*}
(\mathcal{K}u)(x) \geq \int_{B_{r}(0)} \kappa(y - x)u(y)dy\ge \frac{1}{2}\kappa(0) \int_{B_{r}(0)}u(y)dy
\ge \frac{1}{2}\kappa(0)\delta_0\quad \forall\ x\in \bar{B}_r(0).
\end{eqnarray*}
Hence
 \begin{equation}\label{Eq1}
 \inf_{x \in \bar{B}_{r}(0)}(\mathcal{K}u)(x) \;\; \ge \;\; \frac{1}{2}\kappa(0)\delta_0.
 \end{equation}
We proceed by induction to show that \eqref{Eq0} holds.

 To this end, let us first show that the claim holds for $m=1$. Observe  that for every $r\leq |x|\le 2r$ and $y\in B_{r}(\frac{rx}{|x|})$,
$|y-x|\leq |y-\frac{rx}{|x|}|+|x-\frac{rx}{|x|}|$ $ = |y-\frac{rx}{|x|}| + |x| - r$  $ < 2r$.  Hence, by \eqref{Eq1} for every $x\in \bar{B}_{2r}(0)\setminus B_{r}(0)$, we have
$$
\aligned
\mathcal{K}^2u(x)\geq \int_{B_{r}(\frac{rx}{|x|})}\kappa(y-x)(\mathcal{K}u)(y)dy\geq  \frac{1}{2}\kappa(0)\int_{B_r(\frac{rx}{|x|})}(\mathcal{K}u)(y)dy\ge \frac{\kappa(0)^2}{2^2}\delta_0\Big|B_{r}\big(\frac{rx}{|x|}\big)\cap B_r(0)\Big|.
\endaligned
$$
Since the Lebesgue measure is rotation invariant and $0<\delta_0<1$, we conclude from the last inequality that
\begin{equation}
    \inf_{x\in\bar{B}_{2r}(0)\setminus B_r(0)}\mathcal{K}^2u(x)\geq \frac{\kappa(0)^2}{2^2}\delta_0\Big|B_{r}(re_1)\cap B_r(0)\Big|\geq \frac{[\delta_0\kappa(0)]^2}{2^2}\Big|B_{r}(re_1)\cap B_r(0)\Big|
\end{equation}
 which proves \eqref{Eq0} for $m=1$.

Next, suppose that \eqref{Eq0} holds for some $m\ge 1$, we show that it also holds for $m+1$. Indeed, as in the previous case, observe that, as shown in the schematic below, we have the following:
$$
|y-x|\leq \big|y-(m+1)r\frac{ x}{|x|}\big|+\big|x-(m+1)r\frac{ x}{|x|}\big|< 2r \,\, $$
$\text{for }\ \ (m+1)r\le |x|\leq (m+2)r \ \text{and}\ y\in B_{r}(\frac{(m+1)rx}{|x|}).
$

\centerline{\includegraphics[width =2.8in]{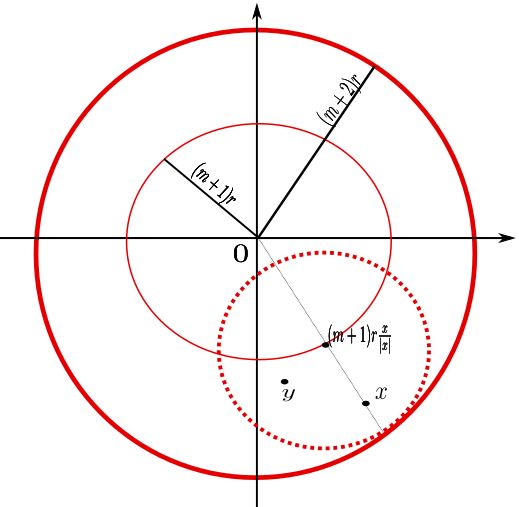}}
Observe also that
$$
B_{r}(\frac{(m+1)rx}{|x|})\cap\Big(\overline{B}_{(m+1)r}(0)\setminus B_{rm}(0)\Big)= B_{r}(\frac{(m+1)rx}{|x|})\cap\overline{B}_{(m+1)r}(0) \quad \forall x\ne 0.
$$  For notational convenience, let $B_{mr}(0) := B_0^{m}$ and $B_{(m+1)r}(0) := B_0^{m+1}.$ 
Using the induction hypothesis and recalling the choice of $r$, we obtain for every $x\in \bar{B}_{(m+2)r}(0)\setminus B_{(m+1)r}(0)$ that
$$
\aligned
\mathcal{K}^{m+2}u(x)
\geq & \int_{B_r(\frac{(m+1)rx}{|x|})}\kappa(y-x)\mathcal{K}^{m+1}u(y)dy\\
\ge & \frac{\kappa(0)}{2}\delta_0\int_{B_r(\frac{(m+1)rx}{|x|})}\mathcal{K}^{m+1}u(y)dy\\
\ge &  \frac{\kappa(0)}{2}\delta_0\Big|B_{r}(\frac{(m+1)rx}{|x|})\cap\big(\bar{B}_0^{m+1}\setminus B_0^{m}\big) \Big| \inf_{ x \in B_{r}(\frac{(m+1)rx}{|x|})\cap\big(\bar{B}_0^{m+1}\setminus B_0^{m}\big)}\mathcal{K}^{m+1}u(x)\\
=&  \frac{\kappa(0)}{2}\delta_0\Big|B_{r}(\frac{(m+1)rx}{|x|})\cap\bar{B}_0^{m+1} \Big| \inf_{ x \in B_{r}(\frac{(m+1)rx}{|x|})\cap\Big(\Bar{B}_0^{m+1}\setminus B_0^{m}\Big)}\mathcal{K}^{m+1}u(x) \\
\ge & \frac{[\delta_0\kappa(0)]^{m+2}}{2^{m+2}}\Big|B_{r}(\frac{(m+1)rx}{|x|})\cap\Bar{B}_0^{m+1} \Big|\Pi_{i=1}^{m}\Big|B_{r}(ir{e_1})\cap B_{r}((i-1)re_1) \Big|.
\endaligned
$$
Again, since the Lebesgue measure is rotation invariant, then $\Big|B_{r}(\frac{(m+1)rx}{|x|})\cap\overline{B}_{(m+1)r}(0) \Big|=\Big|B_{r}((m+1)re_1)\cap\overline{B}_{(m+1)r}(0) \Big|$, which together with the last inequality show that the claim also holds for $m+1$.

We then deduce that the claim holds for every $m\ge 1$. Now, by choosing $m\gg 1$ such that $B_{kr_0}(0)\subset B_{mr}(0)$, we can derive from \eqref{Eq0} that \eqref{Eq::1} holds with $i=m$.
\end{proof}

\subsection{Part metric}

In this subsection, we recall the decreasing property of part metric between two positive solutions of \eqref{main-nonlinear-eq}.

For given $u,v\in X^{++}$, the part metric between $u$ and $v$, denoted by $\rho(u,v)$,  is defined by
$$
\rho(u,v) = \inf\{\ln \alpha\,|\, \frac{1}{\alpha}u \le v \le \alpha u, \; \alpha \ge 1 \}.$$

\begin{proposition}
\label{new-prop2}
\begin{itemize}
\item[(1)]
For any $u_1,u_2\in X^{++}$ and $t>s$,
$\rho(u(t,\cdot;s,u_1),u(t,\cdot;s,u_2))\le \rho(u_1,u_2)$.

\item[(2)] For any $\delta > 0, \; \sigma > 0, \; M > 0$ and $\tau > 0$ with $\delta < M$ and $\sigma \le \ln{\frac{M}{\delta}},$ there is $\tilde{\sigma} > 0$ such that for any $u_0, \; v_0 \in X^{++}$ with $\delta \le u_0(x) \le M. \;\; \delta \le v_0(x) \le M$ for $x \in \RR^N$ and $\rho(u_0, v_0) \ge \sigma,$ there holds
$$
\rho(u(s+\tau,\cdot;s,u_0), u(s+\tau,\cdot ;s, v_0)) \le \rho(u_0, v_0) - \tilde{\sigma}\quad \forall\, s\in\RR.
 $$
\end{itemize}
\end{proposition}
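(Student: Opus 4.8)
The plan rests on the strict monotonicity built into (H2): since $\sup_{t,x}f_u(t,x,u)<0$ for every $u\ge0$, the map $u\mapsto f(t,x,u)$ is strictly decreasing, so for any constant $\alpha\ge1$ and any positive solution $u(t,x)$ of \eqref{main-nonlinear-eq} the rescaled functions $\alpha u$ and $\alpha^{-1}u$ are, respectively, a bounded super-solution and a bounded sub-solution of \eqref{main-nonlinear-eq} (for $\alpha u$: $\partial_t(\alpha u)=\int_D\kappa(y-x)(\alpha u)(t,y)\,dy+(\alpha u)f(t,x,u)\ge\int_D\kappa(y-x)(\alpha u)(t,y)\,dy+(\alpha u)f(t,x,\alpha u)$ because $\alpha u\ge u$; dually for $\alpha^{-1}u$). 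Both parts will then be obtained by feeding these into Proposition \ref{comparison-prop}(1).

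For part (1), take $\alpha=e^{\rho(u_1,u_2)}$; the infimum defining $\rho$ is attained (the admissible set of $\alpha$'s is a closed half-line), so $\alpha^{-1}u_1\le u_2\le\alpha u_1$. Writing $u^{(i)}(t,\cdot)=u(t,\cdot;s,u_i)$, the function $\alpha u^{(1)}$ is a bounded super-solution with $\alpha u^{(1)}(s,\cdot)=\alpha u_1\ge u_2=u^{(2)}(s,\cdot)$, and $\alpha^{-1}u^{(1)}$ is a bounded sub-solution with $\alpha^{-1}u^{(1)}(s,\cdot)\le u^{(2)}(s,\cdot)$. Proposition \ref{comparison-prop}(1) then gives $\alpha^{-1}u^{(1)}(t,\cdot)\le u^{(2)}(t,\cdot)\le\alpha u^{(1)}(t,\cdot)$ for all $t\ge s$, i.e. $\rho(u^{(1)}(t,\cdot),u^{(2)}(t,\cdot))\le\ln\alpha=\rho(u_1,u_2)$.

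For part (2), the idea is to make the two inequalities of part (1) \emph{quantitatively} strict and let a scalar Gronwall estimate propagate the gap. Fix $u_0,v_0$ as in the statement, put $\sigma_0=\rho(u_0,v_0)\in[\sigma,\ln(M/\delta)]$, $\alpha_0=e^{\sigma_0}$, and $u^{(1)},u^{(2)}$ as above. First record the a priori bounds that keep all constants uniform: since $1+f(t,x,u)<0$ for $u\gg1$ while $\int_D\kappa(y-x)\,dy\le1$, a large constant $M_1\ge M$ is a super-solution, so $0\le u^{(i)}(t,\cdot)\le M_1$ for $t\ge s$; and, dropping the nonnegative dispersal term and using $f\ge-C_1$ on $[0,M_1]$, $\partial_t u^{(i)}\ge-C_1u^{(i)}$, hence $u^{(i)}(t,x)\ge\delta e^{-C_1\tau}=:\delta_1>0$ for $t\in[s,s+\tau]$. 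With $\tilde M=M_1M/\delta$, let $\beta:=-\max_{0\le u\le\tilde M}\sup_{t,x}f_u(t,x,u)>0$ (positive since $f_u$ is continuous and $\sup_{t,x}f_u(t,x,u)<0$ for each $u$) and let $C$ be a Lipschitz constant of $v\mapsto vf(t,x,v)$ on $[0,\tilde M]$, uniform in $(t,x)$. By part (1), $w:=\alpha_0u^{(1)}-u^{(2)}\ge0$ and $\tilde w:=u^{(2)}-\alpha_0^{-1}u^{(1)}\ge0$ on $[s,s+\tau]$. Splitting $\alpha_0u^{(1)}f(t,x,u^{(1)})-u^{(2)}f(t,x,u^{(2)})$ through $\alpha_0u^{(1)}f(t,x,\alpha_0u^{(1)})$, and using $f(t,x,u^{(1)})-f(t,x,\alpha_0u^{(1)})\ge\beta(\alpha_0-1)u^{(1)}$ together with the Lipschitz bound, one obtains $\partial_t w\ge\int_D\kappa(y-x)w(t,y)\,dy-Cw+\alpha_0\beta(\alpha_0-1)(u^{(1)})^2$ on $[s,s+\tau]$, hence $\partial_t w\ge-Cw+\beta(e^\sigma-1)\delta_1^2$; an analogous inequality holds for $\tilde w$ with source term $\ge\beta(e^\sigma-1)\delta_1^2\delta^2/M^2$. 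Integrating these scalar inequalities with $w(s,\cdot),\tilde w(s,\cdot)\ge0$ produces a constant $\gamma_1>0$ depending only on $\delta,M,\sigma,\tau$ with $w(s+\tau,\cdot)\ge\gamma_1$ and $\tilde w(s+\tau,\cdot)\ge\gamma_1$. Therefore $\alpha_0^{-1}u^{(1)}(s+\tau,\cdot)+\gamma_1\le u^{(2)}(s+\tau,\cdot)\le\alpha_0u^{(1)}(s+\tau,\cdot)-\gamma_1$; dividing by $u^{(1)}(s+\tau,x)\in[\delta_1,M_1]$ and using $\alpha_0\le M/\delta$ shows that $\alpha_1:=\alpha_0-\gamma_1/M_1$ satisfies $\alpha_1\ge1$ and $\alpha_1^{-1}u^{(1)}(s+\tau,\cdot)\le u^{(2)}(s+\tau,\cdot)\le\alpha_1u^{(1)}(s+\tau,\cdot)$, so $\rho(u^{(1)}(s+\tau,\cdot),u^{(2)}(s+\tau,\cdot))\le\ln\alpha_1\le\sigma_0-\gamma_1\delta/(MM_1)=:\sigma_0-\tilde\sigma$.

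The super/sub-solution identities and the scalar Gronwall estimates are routine; the real work in part (2) is the constant bookkeeping, namely arranging that $\tilde\sigma$ depends only on $\delta,M,\sigma,\tau$ and neither on $s$ nor on the particular $u_0,v_0$. This forces one to extract the monotonicity margin $\beta$ uniformly over the a priori range $[0,\tilde M]$ of the solutions, and to dispose of the borderline case $\alpha_0-\gamma_1/M_1<1$, which in fact cannot occur: in that case the lower bounds on $w(s+\tau,\cdot)$ and $\tilde w(s+\tau,\cdot)$ would simultaneously force $u^{(2)}(s+\tau,\cdot)<u^{(1)}(s+\tau,\cdot)$ and $u^{(2)}(s+\tau,\cdot)>u^{(1)}(s+\tau,\cdot)$, a contradiction.
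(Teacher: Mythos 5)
Your proof is correct, and it fills in what the paper outsources: the paper's own ``proof'' of this proposition is just two citations ([RaSh, Proposition 5.1] for (1) and [Kong, Proposition 3.4] for (2)), so you have essentially reconstructed the standard argument behind those references. The key observations --- that the strict sign condition $\sup_{t,x}f_u(t,x,u)<0$ in (H2) makes $\alpha u$ a super-solution and $\alpha^{-1}u$ a sub-solution of \eqref{main-nonlinear-eq} for any positive solution $u$ and any $\alpha\ge 1$, giving part (1) by Proposition \ref{comparison-prop}; and that this same sign condition provides a uniform negative margin $\beta$ on $f_u$ over the a priori range $[0,\tilde M]$ of the trajectories, which a scalar Gronwall estimate converts into a uniform gap $\gamma_1$ at time $s+\tau$, giving part (2) --- are exactly the right ingredients, and all your constants ($\delta_1$, $M_1$, $\tilde M$, $\beta$, $C$, $\gamma_1$, $\tilde\sigma$) really do depend only on $\delta,M,\sigma,\tau$ and the fixed data $\kappa,f$, not on $s$ or on the particular $u_0,v_0$.

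Two small points that deserve to be spelled out rather than asserted. First, you use a single $\gamma_1$ for both $w$ and $\tilde w$ even though the two source terms you derive ($\beta(e^\sigma-1)\delta_1^2$ and $\beta(e^\sigma-1)\delta_1^2\delta^2/M^2$) differ; you should take $\gamma_1$ to be the minimum of the two resulting Duhamel lower bounds, which is harmless since both are uniformly positive. Second, the claim that $\alpha_1:=\alpha_0-\gamma_1/M_1$ satisfies the \emph{lower} inequality $\alpha_1^{-1}u^{(1)}(s+\tau,\cdot)\le u^{(2)}(s+\tau,\cdot)$ is not immediate from ``dividing by $u^{(1)}$'': you have $u^{(2)}/u^{(1)}\ge\alpha_0^{-1}+\gamma_1/M_1$, and you need $\alpha_0^{-1}+\gamma_1/M_1\ge(\alpha_0-\gamma_1/M_1)^{-1}$, which with $c=\gamma_1/M_1$ reduces to $c\le\alpha_0-\alpha_0^{-1}$. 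This does hold --- once the borderline argument establishes $\alpha_1\ge 1$, i.e. $c\le\alpha_0-1$, one has $c\le\alpha_0-1\le\alpha_0-\alpha_0^{-1}$ since $\alpha_0\ge 1$ --- but the one-line computation should appear, since it is precisely the step that turns the two one-sided quantitative bounds into a single improved $\alpha_1$ in the definition of the part metric.
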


\begin{proof}
(1)  See \cite[Proposition 5.1]{RaSh}.

(2)  See \cite[Proposition 3.4] {Kong}.
\end{proof}

\section{Uniqueness, stability, and frequency module of almost periodic solutions}

 In this section, we study the uniqueness, almost periodicity, and stability of a strictly positive   bounded  entire solution of \eqref{main-nonlinear-eq} and prove Theorem \ref{uniqueness-thm}.

We first prove two lemmas.

\begin{lemma}
\label{new-new-new-lm1}
Suppose that $g(t,x)$ is a uniformly continuous,  bounded  function in $t\in\RR$ and $x\in\bar D$, with $g(t,x)>0$ for all $(t,x) \in \RR \times D,$  and $f(t,x,u)$ satisfies {\bf (H2)}.
Then for any fixed $x\in\bar D$, the ODE
\begin{equation}
\label{new-new-new-ode1}
u_t=g(t,x)+uf(t,x,u)
\end{equation}
has at most one strictly positive   bounded  entire solution $u^*(t)$.
\end{lemma}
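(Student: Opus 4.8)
The plan is to freeze $x\in\bar D$, treat \eqref{new-new-new-ode1} as a scalar ODE, and exploit the strict monotonicity in $u$ that emerges after the substitution $v=\ln u$. Suppose $u_1(t)$ and $u_2(t)$ are two strictly positive bounded entire solutions; put $m=\min\{\inf_t u_1(t),\inf_t u_2(t)\}>0$ and $M=\max\{\sup_t u_1(t),\sup_t u_2(t)\}<\infty$, so that $u_i(t)\in[m,M]$ for all $t$, and write $G(t,u)=\frac{g(t,x)}{u}+f(t,x,u)$. Then each $u_i$ satisfies $(\ln u_i)'(t)=G(t,u_i(t))$, and, since $g(t,x)\ge 0$,
\[
\partial_u G(t,u)=-\frac{g(t,x)}{u^2}+f_u(t,x,u)\le f_u(t,x,u).
\]

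First I would record the uniform negativity $\bar\gamma:=\sup\{f_u(t,x,u):t\in\RR,\ x\in\bar D,\ u\in[m,M]\}<0$. This is the one place where \textbf{(H2)} is used in a slightly non-obvious way: the function $\eta(u):=\sup_{t,x}f_u(t,x,u)$ is negative for every $u\ge 0$ and is continuous (by the uniform continuity of $f_u$ on $\RR\times\bar D\times[m,M]$), hence attains a strictly negative maximum on the compact interval $[m,M]$; this gives $\partial_u G(t,u)\le\bar\gamma<0$ for all $t$ and all $u\in[m,M]$. Next, set $\alpha(t)=\ln u_1(t)-\ln u_2(t)$, which is bounded with $|\alpha(t)|\le\ln(M/m)$. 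At any $t$ with $\alpha(t)>0$ the mean value theorem in $u$, together with $e^s-1\ge s$, gives
\[
\alpha'(t)=G(t,u_1(t))-G(t,u_2(t))\le\bar\gamma\bigl(u_1(t)-u_2(t)\bigr)=\bar\gamma\,u_2(t)\bigl(e^{\alpha(t)}-1\bigr)\le -c\,\alpha(t),
\]
where $c:=-\bar\gamma\,m>0$. Now, if $\alpha(t_1)>0$ for some $t_1$, let $I\ni t_1$ be the maximal interval on which $\alpha>0$; on $I$ the function $\psi(t):=\alpha(t)e^{c(t-t_1)}$ is nonincreasing, so $\alpha(t)\ge\alpha(t_1)e^{c(t_1-t)}$ for every $t\le t_1$ in $I$. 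If $I$ has a finite left endpoint $t_*$ then continuity forces $\alpha(t_*)=0$ while the bound forces $\alpha(t_*)\ge\alpha(t_1)e^{c(t_1-t_*)}>0$; if $I$ is unbounded below then $\alpha(t)\to\infty$ as $t\to-\infty$. Either way we contradict the boundedness of $\alpha$, so $\alpha\le 0$, i.e. $u_1\le u_2$; interchanging $u_1$ and $u_2$ gives $u_1\equiv u_2$.

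Rather than a genuine obstacle, the two points that need care are: extracting the uniform bound $\bar\gamma<0$ from a hypothesis that only asserts negativity for each fixed $u$ (handled by the continuity of $\eta$); and the fact that $\sup_t\alpha$ need not be attained, which is why the argument runs the differential inequality backward in time along the maximal positivity interval and closes using the a priori bound $|\alpha|\le\ln(M/m)$, instead of evaluating at a maximum point. Because $x$ is a frozen parameter and the underlying object is a scalar ODE, no compactness, hull, or translation machinery is needed here.
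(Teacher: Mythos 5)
Your argument is correct and follows essentially the same route as the paper's own proof: compare two bounded strictly positive entire solutions via the logarithm of their ratio, extract a uniform negative bound $\bar\gamma$ on $f_u$ over the relevant compact $u$-range, derive an exponential growth estimate for the log-ratio backward in time, and contradict boundedness. The only differences are presentational: you make explicit the compactness argument yielding $\bar\gamma<0$ (the paper simply asserts the existence of such a constant), and you work with the maximal positivity interval of $\alpha=\ln u_1-\ln u_2$ instead of first invoking an ODE comparison principle to order the two solutions; the paper instead assumes WLOG $u_1^*(0)<u_2^*(0)$, shows the ratio is monotone, and integrates a constant lower bound $\beta$ on $\frac{d}{dt}\ln(u_1^*/u_2^*)$ over $t\le 0$.
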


\begin{proof} It can be proved by properly modifying the arguments in \cite[Theorem 2.1]{Nka}. For completeness, we provide a proof in the following.

Fix $x\in\bar D$.
Suppose that \eqref{new-new-new-ode1} has two strictly positive   bounded   entire solutions $u_1^*(t)$ and $u_2^*(t)$, $u_1^*(t)\not = u_2^*(t)$. Without loss of generality, we may assume that $u_1^*(0)<u_2^*(0)$.
Then by comparison principle for ODEs,
$$
u_1^*(t)<u_2^*(t)\le M\quad \forall \, t \in\RR.
$$
By {\bf (H2)}, there is $\alpha>0$ such that
\begin{align}
\label{new-new-new-eq2}
\frac{d}{dt}\ln \big(\frac{u_1^*(t)}{u_2^*(t)}\big)&=
\frac{u^*_{1t}}{u^*_1}-\frac{u^*_{2t}}{u^*_2}\nonumber\\
&=\frac{g(t,x)}{u^*_1(t)}-\frac{g(t,x)}{u_2^*(t)}+f(t,x,u_1^*(t))-f(t,xu_2^*(t))\nonumber\\
&>f(t,x,u_1^*(t))-f(t,x,u_2^*(t))\nonumber \\
&\ge \alpha (u_2^*(t)-u_1^*(t))\quad \forall\, t\in\RR.
\end{align}
This implies that $\ln \big(\frac{u_1^*(t)}{u_2^*(t)}\big)$  increases  in $\RR$ and then there is some $0<c<1$ such that
$$
\frac{u_1^*(t)}{u_2^*(t)}\le \frac{u_1^*(0)}{u_2^*(0)}\le c<1\quad \forall\, t\le 0.
$$
Hence
$$
u_2^*(t)-u_1^*(t)=u_2^*(t)\Big(1-\frac{u_1^*(t)}{u_2^*(t)}\Big)\ge (1-c)u_2^*(t)\quad \forall \, t\le 0.
$$
This together with \eqref{new-new-new-eq2} implies that there is $\beta>0$ such that
$$
\frac{d}{dt}\ln \big(\frac{u_1^*(t)}{u_2^*(t)}\big)\ge \beta\quad \forall t\le 0.
$$
{Integrating the above inequality from $t$ to $0$ for $t\le 0$, we have}
$$
\ln \big(\frac{u_1^*(t)}{u_2^*(t)}\big)\le \ln\big(\frac{u_1^*(0)}{u_2^*(0)}\big)+\beta t\quad \forall\, t\le 0
$$
and then
$$
\frac{u_1^*(t)}{u_2^*(t)}\le \frac{u_1^*(0)}{u_2^*(0)} e^{\beta t}\quad \forall\, t\le 0.
$$
Letting $t\to -\infty$, we obtain
$$
\lim_{t\to - \infty}\frac{u_1^*(t)}{u_2^*(t)}=0,
$$
which contradicts $u_1^*(t)$ and $u_2^*(t)$ being \bk two strictly positive entire   bounded  solutions of \eqref{new-new-new-ode1}. Hence \eqref{new-new-new-eq1} has at most one strictly positive   bounded   entire solution.
\end{proof}

\begin{lemma}
\label{new-new-new-lm2} Suppose that $u^*(t,x)$ is a strictly positive and bounded measurable function on $\RR\times \bar D$, is differentiable in $t$ for each $x\in\bar D$, and satisfies \eqref{main-nonlinear-eq} for $t\in\RR$ and $x\in\bar D$, that is,
\begin{equation}
\label{new-aux-eq1}
\frac{\p u^*}{\p t}(t,x)=\int_D \kappa(y-x)u^*(t,y)dy+u^*(t,x)f(t,x,u^*(t,x)),\quad t\in\RR,\,\, x\in \bar D.
\end{equation}
Then  $u^*(t, x)$ is uniformly continuous in $t\in\RR$ and $x\in\bar D$, and   $\RR\ni t\mapsto u^*(t,\cdot)\in X$ is differentiable
and hence  $u^*(t,x)$ is a strictly positive bounded solution of \eqref{main-nonlinear-eq}.
\end{lemma}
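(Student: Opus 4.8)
The plan is a bootstrapping argument: out of the bare hypotheses I will successively upgrade the regularity of $u^*$ in $t$, then in $x$, and finally establish differentiability of the curve $t\mapsto u^*(t,\cdot)$ in $X$. Write $m:=\inf_{\mathbb{R}\times\bar D}u^*>0$, $M:=\sup_{\mathbb{R}\times\bar D}u^*<\infty$, and $(\mathcal{K}u^*)(t,x):=\int_D\kappa(y-x)u^*(t,y)\,dy$.

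First I would note that the right-hand side of \eqref{new-aux-eq1} is uniformly bounded: $|(\mathcal{K}u^*)(t,x)|\le M\int_D\kappa(y-x)\,dy\le M$, while $|u^*(t,x)f(t,x,u^*(t,x))|\le M\sup\{|f(t,x,u)|:t\in\mathbb{R},\,x\in\bar D,\,0\le u\le M\}<\infty$ by (H2). Hence $|\partial_t u^*(t,x)|\le C$ with $C$ independent of $(t,x)$, so $u^*$ is Lipschitz in $t$ with a constant uniform in $x\in\bar D$; the right-hand side of \eqref{new-aux-eq1} is then continuous in $t$, so $u^*(\cdot,x)\in C^1(\mathbb{R})$ for each $x$. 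It follows that $g(t,x):=(\mathcal{K}u^*)(t,x)$ is Lipschitz in $t$ uniformly in $x$ (since $|g(t,x)-g(s,x)|\le\int_D\kappa(y-x)|u^*(t,y)-u^*(s,y)|\,dy\le C|t-s|$) and Lipschitz in $x$ uniformly in $t$ (since $\kappa,\nabla\kappa\in L^1(\mathbb{R}^N)$ by (H1), giving $\int_D|\kappa(y-x_1)-\kappa(y-x_2)|\,dy\le\|\nabla\kappa\|_{L^1}|x_1-x_2|$, hence $|g(t,x_1)-g(t,x_2)|\le M\|\nabla\kappa\|_{L^1}|x_1-x_2|$). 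Reading \eqref{new-aux-eq1} at a fixed $x$, for each $x\in\bar D$ the map $t\mapsto u^*(t,x)$ is an entire solution, valued in $[m,M]$, of the scalar equation $\dot u=g(t,x)+uf(t,x,u)$.

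The core of the argument is to show that $u^*$ is uniformly continuous in $x$, uniformly in $t\in\mathbb{R}$. A naive Gronwall estimate for $u^*(t,x_1)-u^*(t,x_2)$ in $t$ is useless because the $u$-derivative of $u\mapsto uf(t,x,u)$ need not be negative; the correct quantity is the logarithmic (part) metric, where the strict negativity of $f_u$ provides exponential contraction. Fix $x_1,x_2\in\bar D$ and set $\psi(t):=\ln u^*(t,x_1)-\ln u^*(t,x_2)$. Differentiating and using \eqref{new-aux-eq1} at $x_1$ and $x_2$,
$$\psi'(t)=\big[\Phi(t,u^*(t,x_1))-\Phi(t,u^*(t,x_2))\big]+R(t),$$
where $\Phi(t,u):=g(t,x_1)/u+f(t,x_1,u)$ and $|R(t)|\le\eta(|x_1-x_2|)/m$, with $\eta(\delta):=M\|\nabla\kappa\|_{L^1}\delta+M\,\omega_f(\delta)\to0$ as $\delta\to0$ ($\omega_f$ the modulus of continuity of $f$ in $x$ on $\mathbb{R}\times\bar D\times[0,M]$, which exists by (H2)). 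Now $\partial_u\Phi=-g/u^2+f_u\le f_u\le-\gamma<0$ on $[m,M]$, where $\gamma>0$ exists because, by (H2), $u\mapsto\sup_{t,x}f_u(t,x,u)$ is continuous and negative, hence bounded away from $0$ on the compact set $[m,M]$. Since $u^*(t,x_1)=u^*(t,x_2)e^{\psi(t)}$ and $u^*\ge m$, one has $|u^*(t,x_1)-u^*(t,x_2)|\ge m|\psi(t)|$, so the mean value theorem yields $\frac{d}{dt}|\psi(t)|\le-\gamma m|\psi(t)|+\eta(|x_1-x_2|)/m$ for a.e.\ $t$. Integrating from $t_0$ to $t$ and letting $t_0\to-\infty$ — legitimate because $|\psi|$ is bounded by $2|\ln(M/m)|$, being built from an entire solution valued in $[m,M]$ — gives $|\psi(t)|\le\eta(|x_1-x_2|)/(\gamma m^2)$ for all $t\in\mathbb{R}$, hence $\sup_{t\in\mathbb{R}}|u^*(t,x_1)-u^*(t,x_2)|\le M\big(e^{\eta(|x_1-x_2|)/(\gamma m^2)}-1\big)\to0$ as $|x_1-x_2|\to0$. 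Together with the $t$-Lipschitz bound from the first step, $u^*$ is uniformly continuous on $\mathbb{R}\times\bar D$; in particular $u^*(t,\cdot)\in X$ for each $t$.

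Finally, let $v(t,x):=g(t,x)+u^*(t,x)f(t,x,u^*(t,x))$, the pointwise $t$-derivative of $u^*$. Then $v(t,\cdot)\in X$ (both summands belong to $X$ by (H2) and $u^*(t,\cdot)\in X$), and $s\mapsto v(s,\cdot)\in X$ is continuous, since $\|v(s,\cdot)-v(t,\cdot)\|_\infty\to0$ as $s\to t$ by the $t$-Lipschitz bounds on $g$ and on $u^*$ together with the uniform continuity of $f$ and the joint uniform continuity of $u^*$. Since $u^*(t+h,x)-u^*(t,x)=\int_t^{t+h}v(s,x)\,ds$ for each $x$, we get $\|h^{-1}(u^*(t+h,\cdot)-u^*(t,\cdot))-v(t,\cdot)\|_\infty\le\sup_{|s-t|\le|h|}\|v(s,\cdot)-v(t,\cdot)\|_\infty\to0$, so $t\mapsto u^*(t,\cdot)\in X$ is differentiable with derivative $\mathcal{K}u^*(t,\cdot)+u^*(t,\cdot)f(t,\cdot,u^*(t,\cdot))$; i.e.\ $u^*$ is a strictly positive ($\inf u^*=m>0$), bounded solution of \eqref{main-nonlinear-eq} in the sense of this paper. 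The one delicate step is the third paragraph: a time-uniform modulus of continuity in $x$ is not visible from the equation written in the usual metric, and the resolution is to recognize the two fibers $u^*(\cdot,x_1)$ and $u^*(\cdot,x_2)$ as entire solutions of two nearby scalar equations that are exponentially stable in the part metric, and to integrate the resulting contractive differential inequality from $-\infty$ using the two-sided bounds on $u^*$; everything else then follows routinely from (H1)--(H2).
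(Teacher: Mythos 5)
Your proof is correct, but it takes a genuinely different route from the paper's. The paper establishes uniform continuity of $u^*$ in $x$ indirectly: it supposes a failure of uniform continuity, extracts sequences $t_n$, $x_n$, $\bar x_n$ with $|x_n-\bar x_n|\to 0$ but $|u^*(t_n,x_n)-u^*(t_n,\bar x_n)|\ge\epsilon_0$, passes to limits of $u^*(\cdot+t_n,x_n)$, $u^*(\cdot+t_n,\bar x_n)$, $g(\cdot+t_n,\cdot)$ and $f(\cdot+t_n,\cdot,\cdot)$ by Arzel\`a--Ascoli, and reduces to the uniqueness of strictly positive bounded entire solutions of the limiting scalar ODE $\dot u=g^*(t,0)+uf^*(t,0,u)$, which is precisely the content of Lemma \ref{new-new-new-lm1}. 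You instead prove the same conclusion quantitatively: from the a.e.\ differential inequality $\tfrac{d}{dt}|\psi(t)|\le-\gamma m|\psi(t)|+O(\eta(|x_1-x_2|))$ for the logarithmic difference, integrated from $-\infty$ using the two-sided bound $m\le u^*\le M$, you extract an explicit uniform modulus of continuity in $x$. This is essentially a quantitative version of Lemma \ref{new-new-new-lm1} embedded directly in the argument, so your proof does not actually invoke that lemma. Your final step (differentiability of $t\mapsto u^*(t,\cdot)$ in $X$) is also handled differently: the paper identifies $u^*$ as simultaneously a super- and sub-solution and uses Proposition \ref{comparison-prop} to write $u^*(t,\cdot)=u(t,\cdot;t_0,u^*(t_0,\cdot))$, whereas you verify the Banach-space difference quotient directly from the continuity of $v(t,\cdot)=g(t,\cdot)+u^*(t,\cdot)f(t,\cdot,u^*(t,\cdot))$ in $X$; both are fine, yours being slightly more self-contained. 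What your route buys is an explicit modulus of continuity (hence a strictly stronger intermediate conclusion), avoidance of any compactness argument, and independence from Lemma \ref{new-new-new-lm1}; the paper's route is shorter once that lemma is in hand. One small point of care: your one-line justification of $|u^*(t,x_1)-u^*(t,x_2)|\ge m|\psi(t)|$ from $u^*(t,x_1)=u^*(t,x_2)e^{\psi(t)}$ together with $u^*\ge m$ only works as stated when $\psi(t)\ge 0$ (since $1-e^{\psi}<|\psi|$ for $\psi<0$); the inequality is nonetheless correct, either by the symmetric rewriting $u^*(t,x_2)=u^*(t,x_1)e^{-\psi(t)}$, or more simply by the mean value theorem for $\ln$ on $[m,M]$, which gives $|\ln a-\ln b|\le|a-b|/m$ for $a,b\in[m,M]$ directly.
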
 

\begin{proof}
 We first  show that $u^*(t,x)$ is uniformly continuous in $t$ uniformly with respect to $x\in\bar D$ and is uniformly continuous in $x$ uniformly with respect to $t\in\RR$, i.e., for any $\epsilon>0$, there is $\delta>0$ such that for any $t_1,t_2\in\RR$ and
$x_1,x_2\in\bar D$ with $|t_1-t_2|<\delta$ and $|x_1-x_2|<\delta$,  there hold
$$
|u^*(t_1,x)-u^*(t_2,x)|<\epsilon \quad \forall\, x\in\bar D
$$
and
$$
|u^*(t,x_1)-u^*(t,x_2)|<\epsilon\quad \forall\, t\in\RR.
$$

Observe that $u_t^*(t,x)$ is a bounded function of $t\in\RR$ and $x\in\bar D$. This implies that $u^*(t,x)$ is uniformly continuous in $t$ uniformly with respect to $x\in\bar D$ and that
\begin{equation}
\label{new-new-new-eq4}
g(t,x):=\int_D \kappa(y-x)u^*(t,y)dy
\end{equation}
is uniformly continuous in $t\in\RR$ and $x\in\bar D$.

 Assume that $u^*(t,x)$ is not uniformly continuous in $x\in\bar D$ uniformly with respect to $t\in\RR$. Then there is $\epsilon_0>0$, $t_n\in\RR$, and $x_n,\bar x_n\in\bar D$ such that
$$
|x_n-\bar x_n|\le\frac{1}{n} \quad \forall\, n\ge 1,
$$
and
\begin{equation}
\label{new-new-new-eq5}
|u^*(t_n,x_n)-u^*(t_n,\bar x_n)|\ge \epsilon_0\quad \forall\, n\ge 1.
\end{equation}
Let $u_n(t)=u^*(t+t_n,x_n)$ and $\bar{u}_n(t)=u^*(t+t_n,\bar{x}_n),$  then
\begin{equation}
\label{u-n-eq}
\frac{du_n(t)}{dt}=g(t+t_n,x_n)+u_n(t)f(t+t_n, x_n, u_n)
\end{equation}
and
\begin{equation}
\label{bar-u-n-eq}
\frac{d\bar{u}_n(t)}{dt}=g(t+t_n,\bar x_n)+\bar{u}_n(t)f(t+t_n, \bar x_n, \bar u_n).
\end{equation}
Note that $u_n(t)$ and $\bar u_n(t)$ are uniformly continuous in $t\in\RR$.  Since  $u^*(t,x)$ is strictly positive and bounded,   there are $\delta_1 > 0, \; M \gg 1$ such  that
$$
 \delta_1 \le u^*(t,x) \le M \quad \forall \; t\in \R, x \in D.
$$
This yields that $u_n(t)$ and $\bar u_n(t)$ are uniformly bounded. Furthermore,  By {\bf (H2)} and the uniform continuity of $g(t,x)$,   we see that their derivatives are bounded, hence $u_n(t)$ and $\bar u_n(t)$ are equicontinuous. Therefore, using the usual diagonal argument and Arzela-Ascoli's theorem,  without loss of generality, we may assume that there are $u_1^*(t)$, $u_2^*(t)$, $g^*(t,x)$ and $f^*(t,x,u)$ such that
\begin{equation}
\label{u-n-limit-eq}
\lim_{n\to\infty}u_n(t)=u_1^*(t),\quad \lim_{n\to\infty}\bar u_n(t)=u_2^*(t),
\end{equation}
\begin{equation}
\label{g-n-limit-eq}
\lim_{n\to\infty} g(t+t_n,x+x_n)=g^*(t,x),\quad \lim_{n\to\infty} g(t+t_n,x+\bar x_n)=g^*(t,x),
\end{equation}
and
\begin{equation}
\label{f-n-limit-eq}
\lim_{n\to\infty} f(t+t_n,x + x_n,u)=f^*(t,x,u),\quad \lim_{n\to\infty} f(t+t_n,x+\bar x_n,u)=f^*(t,x,u)
\end{equation}
locally uniformly in $t\in\RR$, $x\in\bar D$, and $u\in\RR$.  By \eqref{u-n-eq}-\eqref{f-n-limit-eq}, $\frac{d u_n(t)}{dt}$ and $\frac{d \bar u_n(t)}{dt}$ also converge locally uniformly in $t\in\RR$ as $n\to\infty$. It then follows that $u_1^*(t)$ and $u_2^*(t)$ are differentiable in $t$ and   are two strictly positive bounded  entire solutions of
$$
u_t=g^*(t,0)+u f^*(t,0,u).
$$
By Lemma \ref{new-new-new-lm1}, $u_1^*(t)\equiv u_2^*(t)$, in particular,
$u_1^*(0)=u_2^*(0)$, which contradicts \eqref{new-new-new-eq5}.
Hence $u^*(t,x)$ is uniformly continuous in $t\in\RR$ and $x\in\bar D$.

Next, we prove that $\RR\ni t\mapsto u^*(t,\cdot)\in X$  is differentiable. By the uniform continuity of $u^*(t,x)$ in $t\in\RR$ and $x\in\bar D$,
$\RR\ni t\mapsto u^*(t,\cdot) \in  X$ is continuous. By \eqref{new-aux-eq1},  for each $x\in\bar D$, $u^*(\cdot,x)\in W_{\rm loc}^{1,1}(\RR)$. Hence $u^*(t,x)$ is both super-solution and sub-solution of \eqref{main-nonlinear-eq} on any interval $(a,b)$.  Then,
by Proposition \ref{comparison-prop},  for any given $t_0\in\RR$,
$$
u^*(t,\cdot)=u(t,\cdot;t_0,u^*(t_0,\cdot))\quad \forall \, t\ge t_0.
$$
This implies that $\RR\ni t\mapsto  u^*(t,\cdot)\in X$  is differentiable, and $u^*(t,x)$ is a strictly positive bounded entire solution of \eqref{main-nonlinear-eq}.
\end{proof}

Next, we prove Theorem \ref{uniqueness-thm}.

\begin{proof} [Proof of Theorem \ref{uniqueness-thm}]

(a)
Suppose that  there are two strictly positive    bounded  entire solutions $u_1^*$ and $u_2^*$ of \eqref{main-nonlinear-eq}. If $u_1^* \neq u_2^*$,  then we can find $t_0 \in \RR$ such that $u_1^*(t_0, \cdot ) \neq u_2^*(t_0, \cdot ).$ This implies that  there is $\sigma > 0$ such that $\rho(u_1^*(t_0,\cdot), u_2^*(t_0,\cdot) \ge \sigma.$  By Proposition \ref{new-prop2}(1),
$$
\rho(u_1^*(t,\cdot),u_2^*(t,\cdot))\ge \sigma\quad \forall \, t\le t_0.
$$
Then by Proposition \ref{new-prop2}(2), there is $\tilde\sigma>0$ such that
\begin{equation}
\label{new-new-new-eq1}
\rho(u_1^*(t_0,\cdot),u_2^*(t_0,\cdot))\le \rho(u_1^*(t_0-k,\cdot),u_2^*(t_0-k,\cdot))-k\tilde\sigma\quad \forall\, k=1,2,\cdots.
\end{equation}
Note that $\rho(u_1^*(t_0-k,\cdot),u_2^*(t_0-k,\cdot))$ is bounded for $k\in\NN$.  This together with \eqref{new-new-new-eq1} implies that
$$
\rho(u_1^*(t_0,\cdot),u_2^*(t_0,\cdot))\le \rho(u_1^*(t_0-k,\cdot),u_2^*(t_0-k,\cdot))-k\tilde\sigma\to -\infty
$$
as $k\to\infty$,
which is a contradiction. Therefore, a strictly positive   bounded  entire solution of \eqref{main-nonlinear-eq} is unique.

\smallskip

\smallskip

(b) Suppose that $u^*(t,x)$ is a strictly positive bounded entire solution of \eqref{main-nonlinear-eq}.
We show that $u^*(t,x)$ is almost periodic in $t$ uniformly with respect to $x\in\bar D$. By Lemma \ref{new-new-new-lm2}, $u^*(t,x)$ is uniformly continuous in $t\in\RR$ and $x\in\bar D$. It then suffices to prove that for each $x\in\bar D$,
$u^*(t,x)$ is almost periodic in $t$. To this end, let $\{t_n\}$ and $\{s_n\}$ be any two sequences of $\RR$. By {\bf (H2)} and the uniform continuity of $u^*(t,x)$, without loss of generality, we may assume that there are $\bar f(t,x,u)$,
 $\tilde f(t,x,u)$, $\hat f(t,x,u)$  satisfying {\bf (H2)}, and $\bar u^*(t,x)$, $\tilde u^*(t,x)$, $\hat u^*(t,x)$ such that
$$
\lim_{n\to\infty}  f(t+t_n,x,u)=\bar f(t,x,u), \,\, \lim_{m\to\infty}\bar f(t+s_m,x,u)=\tilde f(t,x,u),\,\, \lim_{n\to\infty} f(t+t_n+s_n,x,u)=\hat f(t,x,u)
$$
locally uniformly in $(t,x,u)\in\RR\times \bar D\times \RR$,
and
$$
\lim_{n\to\infty} u^*(t+t_n,x)=\bar u^*(t,x),\,\, \lim_{m\to\infty} \bar u^*(t+s_m,x)=\tilde u^*(t,x),\,\, \lim_{n\to\infty} u^*(t+t_n+s_n,x)=\hat u^*(t,x)
$$
locally uniformly in $(t,x)\in\RR\times \bar D$. {Moreover, using \eqref{main-nonlinear-eq},  $\p_t u^*(t+t_n,x)$ also converges locally uniformly in $(t,x)\in\RR\times\bar D$ as $n\to\infty$, and then  $\bar u^*(t,x)$  is differentiable in $t$ and satisfies \eqref{main-nonlinear-eq} with $f$ being replaced by $\bar f$ for each $t\in\RR$ and $x\in\bar D$. By  Lemma \ref{new-new-new-lm2},   $\bar u^*(t,x)$ is a strictly positive bounded entire solution of \eqref{main-nonlinear-eq} with $f$ being replaced by $\bar f$. Similarly,
 $\tilde u^*(t,x)$ (resp.  $\hat u^*(t,x)$) is a strictly positive  {bounded} entire solution of \eqref{main-nonlinear-eq} with $f$ being replaced by $\tilde f$ (resp. $\hat f$).}
By Lemma \ref{ptw def}, $\tilde f(t,x,u)=\hat f(t,x,u)$. Then by (a), $\tilde u^*(t,x)=\hat u^*(t,x)$. By Lemma \ref{ptw def} again, $u^*(t,x)$ is almost periodic in $t$.

By the arguments similar to the proof of almost periodicity of $u^*(t,x)$ in $t$, we have that $u^*(t,x)$ is almost periodic in $x$ when $D=\RR^N$.

\smallskip

(c)   Suppose that $u^*(t,x)$ is a strictly positive bounded entire solution of \eqref{main-nonlinear-eq}. We prove that $u^*(t,x)$ is asymptotically stable with respect to strictly positive perturbation.
  First note that there are $\delta_1 > 0, \; M \gg 1$ such that
\begin{equation}\label{sol bound}
 \delta_1 \le u^*(t,x) \le M \quad \forall \; t\in \R, x \in D.
\end{equation}
For given  $u_0 \in X^{++}$ and $t_0 \in \RR$, let  $u(t,x;t_0,u_0) $ be the solution to \eqref{main-nonlinear-eq} with $u(t_0, x;t_0,u_0) = u_0(x)$.
Observe that, for some $0 < b \ll 1,\;bu^*(t,x)$ is a subsolution of \eqref{main-nonlinear-eq}, {and  $u\equiv M$ is a  supersolution  of \eqref{main-nonlinear-eq}  when  $M \gg 1$.  Therefore, we can find $0 < b \ll 1$ and $M \gg 1$ such that
$$bu^*(t_0,x) \le u_0(x) \le M \quad \forall \; x \in \bar D.$$
By Proposition \ref{comparison-prop},
\begin{equation}\label{bound eqn}
bu^*(t,x) \le u(t,x;t_0,u_0) \le M \quad  \forall \; t \ge t_0, \; x\in\bar  D.
\end{equation}}

 Let $\rho(t;t_0)=\rho(u(t+t_0,\cdot;u_0),u^*(t+t_0,\cdot))$  for every $t\ge 0$. We claim that
\begin{equation}\label{zz-5}
    \limsup_{t\to\infty}\sup_{t_0\in\mathbb{R}}\rho(t;t_0)=0.
\end{equation}
Suppose on the contrary that \eqref{zz-5} is false. Then we can find  sequences $\{t_{0,n}\}_{n\ge 1}$ and $\{t_n\}_{n\ge 1}$ with $t_n\ge1+ n$ for each $n\geq1$  such that
\begin{equation*}
    \sigma_0:=\inf_{n\ge 1}\rho(t_n;t_{0,n})>0.
\end{equation*}
By proposition \ref{new-prop2}(1), we know that $\rho(t;t_{0,n})\ge \rho(t_n;t_{0,n})\geq \sigma_0$ for every $n\ge 1$ and $0\le t\le t_n$. Thus, by  \eqref{sol bound}, \eqref{bound eqn} and  proposition \ref{new-prop2}(2), there is $\tilde{\delta}>0$ such that
$$
\rho(t+1;t_{0,n})\leq\rho(t;t_{0,n})-\tilde{\delta}\quad \forall\ n\geq 1,\ 0\le t<t_n.
$$
In particular, since $n<t_n$ for each $n\ge 1$,
$$
\rho(n+1;t_{0,n} )\leq \rho(n;t_{0,n})-\tilde{\delta}\le \cdots\leq \rho(0;t_{0,n})-(n+1)\tilde{\delta} \quad \forall\ n\ge 1.
$$
Hence we have
\begin{equation}\label{zz-4}
0<\sigma_0\leq \rho(t_n;t_{0,n})\leq \rho(n+1;t_{0,n})\leq \rho(0;t_{0,n})-(n+1)\tilde{\delta}\quad \forall\  n\ge 1.
\end{equation}
This yields a contradiction since  $\rho(0;t_{0,n})=\rho(u^*(t_{0,n},\cdot),u_0)\le \ln(\frac{M}{\delta}\big)$ for all $n\ge 1$. Hence we conclude that \eqref{zz-5} must hold. Now, \eqref{zz-5} implies that
$$
\lim_{t\to\infty}\sup_{t_0\in\mathbb{R}}\|u^*(t+t_0,\cdot)-u(t+t_0,\cdot;t_0,u_0)\|_{\infty}=0.
$$
This establishes the asymptotic stability of $u^*(t,x)$ with respect to  strictly positive perturbations.

\smallskip

(d)  Suppose that $u^*(t,x)$ is a strictly positive bounded entire solution of \eqref{main-nonlinear-eq}.  We prove that $\mathcal{M}(u^*)\subset\mathcal{M}(f)$.
  For any given sequence $\{t_n\}$ in $\RR$,  suppose that  $f(t +t_n, x,u) \to f(t,x,u)$ uniformly on bounded sets.
By   (a) and (b),   there  is a subsequence $\{t_{n_k}\}$  of $\{t_n\}$ such that, $u^*(t + t_{n_k}, x) \to u^*(t,x)$ uniformly on bounded sets, as $k \to \infty.$ Similarly, for any given sequence $\{x_n\}$ in $\RR^N$, if \bk $f(t,x+x_n,u)\to f(t,x,u)$ uniformly on bounded sets, then there is a subsequence $\{x_{n_k}\}$ of $\{x_n\}$ such that $u^*(t,x+x_{n_k})\to u^*(t,x)$ as $k\to\infty$ locally uniformly.
It then follows  from Proposition \ref{mod cont} that $\mathcal{M}(u^*)\subset\mathcal{M}(f)$.
\end{proof}

\section{Existence and nonexistence of positive   bounded  entire solutions}

In this section, we study the existence  of a strictly positive   bounded  entire solution of \eqref{main-nonlinear-eq} and prove Theorem \ref{existence-thm}.

\begin{proof} [Proof of Theorem \ref{existence-thm}]
 (a)
First, suppose that \eqref{main-nonlinear-eq} has a strictly positive   bounded  entire solution $u^*(t,x)$. {By {\bf (H2)}, $f_{\inf}(u):=\inf_{t\in\RR,x\in\bar D} f_u(t,x,u)$ is continuous in $u\ge 0$ and $f_{\inf}(u)<0$ for $u\ge 0$.  Let $u^*_{\inf}=\inf_{t\in\RR,x\in\bar D} u^*(t,x)$ and $u^*_{\sup}=\sup_{t\in\RR,x\in\bar D} u^*(t,x)$.
Then
 for any  $0<\lambda\le  - u_{\inf}^* \cdot \sup_{u\in [0, u_{\sup}^*]} f_{\inf}(u)$, we have
\begin{align}
\label{lambda-ineq}
f(t,x,u^*(t,x))-f(t,x,0)&=\int_0^ 1 \frac{d}{ds} f(t,x, s u^*(t,x)) ds\nonumber\\
&=u^*(t,x)\int_0^ 1 f_u(t,x,su^*(t,x))d s\nonumber\\
& \le -\lambda\quad\forall\, t\in\RR,\, x\in\bar D.
\end{align}}
This implies that
\begin{align*}
    u^*_t&= \int_D \kappa(y-x)u^*(t,y)dy + u^*f(t,x,u^*(t,x))\\
&=   \int_{D}\kappa(y-x)u^*(t,y)dy+  u^* (f(t,x,0) +f(t,x,u^*(t,x))-f(t,x,0))\\
&\le \int_D \kappa(y-x) u^*(t,y)dy+u^*(f(t,x,0)-\lambda)\quad \forall\, t\in\RR,\,x\in\bar D.
\end{align*}
It then follows that $ \lambda_{PE}(a)\ge \lambda>0$, where $a(t,x)=f(t,x,0)$.

Next, suppose that $\lambda_{PE}(a) > 0$. Let $M \gg 1.$ Then $u(t,x) \equiv M$ is a supersolution of (\ref{main-nonlinear-eq}).
 By Proposition \ref{comparison-prop}, $u(t, \cdot ; -K, M) \leq M$.  This implies that $u(t, x ; -K, M)$ decreases as $K$ increases. Hence we can define
\begin{equation}
\label{u-star-def-eq}
(0\le) u^*(t,x):=\lim_{K\to\infty} u(t,x;-K,M){(\le M)}\quad \forall\,\, t\in\mathbb{R},\,\, x\in \bar D.
\end{equation}
It is clear that $u^*(t,x)$ is measurable in $(t,x)\in\RR\times\bar D$. Moreover, note that
$$
u_t(t,x;-K,M)=\int_D\kappa(y-x)u(t,y;-K,M)dy+u(t,x;-K,M) f(t,x,u(t,x;-K,M))
$$
for all $ t>-K$ and $ x\in\bar D$.
This together with the Dominated Convergence Theorem implies that, for each fixed $x\in\bar D$,
\begin{equation}
\label{proof-eq00}
 u^*_t(t,x)= \int_D \kappa(y-x)u^*(t,y)dy + u^*f(t,x,u^*(t,x))\quad \forall\, t\in\mathbb{R},
\end{equation}
and then $u^*(\cdot,x)\in W_{\rm loc}^{1,1}(\RR)$.

In the following, we prove that $u^*(t,x)$ is strictly positive.  We do so in two steps.

\smallskip

\noindent {\bf Step 1.} In this step, we prove that  {\it   there is $r_x>0$ such that
 \begin{equation}
\label{proof-eq0}
\inf_{t\in\mathbb{R},y\in B_{r_x}(x)\cap D}u^*(t,y)>0.
\end{equation}
}

 Let $\lambda \in \Lambda_{PE}(a)$ be such that $0< \lambda < \lambda_{PE},$ $\lambda_{PE} - \lambda \ll 1$. Let
$\phi\in \mathcal{X}^+$ satisfy $\inf_{t\in\RR}\phi(t,x)\ge \not \equiv 0$, $\|\phi\|_{\mathcal{X}}=1$,
{ for each $x\in\bar D$,  $ \phi \in W^{1,1}_{\rm loc}(\RR)$ }  and
$$
\lambda \phi (t,x)\leq L\phi(t,x)\quad {\rm for}\quad a.e.\,  t\in\RR.
$$
By {\bf (H2)},  $\left( f(t,x,0) - f(t,x,b\phi) - \lambda \right)\phi(t,x) \leq 0$ for  $0 < b \ll 1$. Thus for each $x\in\bar D$, 
 $u(t,x) = b\phi(t,x)$ solves
\begin{eqnarray*}\label{sub}
\frac{\partial u(t,x)}{\partial t} &\leq& \int_D \kappa(y - x)u(t,y)dy  + a(t,x)u(t,x) - \lambda u(t,x)\\
&=&  \int_D \kappa(y - x)u(t,y)dy  + u(t,x)f(t,x,u) \\
& & + \left( f(t,x,0) - f(t,x,u) - \lambda \right)u(t,x)\\
&\leq&  \int_D \kappa(y - x)u(t,y)dy  + u(t,x)f(t,x,u)\quad {\rm for}\,\, a.e. \, t\in\RR.
\end{eqnarray*}
 Hence, $b\phi$ is a subsolution of (\ref{main-nonlinear-eq}). Therefore, by Proposition \ref{comparison-prop},
\begin{equation}
\label{new-eq0}
u(t,x;-K,M)\ge u(t, x ; -K, b\phi(-K,x)) \geq b\phi(t,x)\quad \forall\, t\ge -K,\,\, x\in \bar D.
\end{equation}

Since $ \inf_{t\in\RR} \phi(t,x)\ge \not\equiv 0,$ we can find $x_0 \in D$ such that
 $$ \delta_1 := \inf_{t\in\RR} b\phi(t,x_0) > 0.
$$
 Moreover, by the continuity of $\inf_{t\in\RR} \phi(t,x)$ in $x$, we have
\begin{equation}
\label{new-eq1}
     \inf_{t\in\RR} b\phi(t,x) \ge  \delta_1/2 \text \; {for} \;x \in D_0:=B_{r_0}(x_0)\cap D\;\; \text{for some} \;\; r_0 > 0.
\end{equation}
Observe  that there is $m >0$ such that $\|f(t,x,u(t,x;-K,M))\| \le m$ for all $t\ge -K$ and $x\in D$. Thus  $u(t,x;-K,M)$ solves
$$\p_t u \ge\int_{D}
\kappa(y-x)u(t,y)dy - mu(t,x)\quad \forall\, t>-K,\,\, x\in \bar D.$$
This together with \eqref{new-eq0} implies that
\begin{equation}
\label{new-eq2}
u(t+1 ,x;-K,M)\ge e^{-m}\big( e^{\mathcal{K}} b \phi(t,\cdot) \big)(x)\quad \forall\, t\ge -K,\, x\in D,
\end{equation}
where
 $\mathcal{K}(u)(x) =\int_{D}
\kappa(y-x)u(y)dy$ for $u\in X$. Hence
\begin{equation}
\label{new-eq2-1}
u^*(t,x)\ge   e^{-m}\big( e^{\mathcal{K}} b \phi(t,\cdot) \big)(x)\quad \forall\, t\in \mathbb{R},\, x\in D.
\end{equation}
By the arguments of  Proposition \ref{new-prop1} and \eqref{new-eq1}, for each $x\in\bar D$, there are $r_x>0$ and  $\mu_x>0$ such that
$$
\big(e^{\mathcal{K}}b\phi(t,\cdot)\big)(y)\ge \mu_x\quad \forall\, t\in\mathbb{R},\,\, y\in B_{r_x}(x)\cap D.
$$
This together with \eqref{new-eq2-1} implies
\eqref{proof-eq0}.

\medskip

\noindent {\bf Step 2.} In this step, we prove that
\begin{equation}
\label{proof-eq000}
\inf_{t\in\mathbb{R},x\in\bar D} u^*(t,x)>0.
\end{equation}

 In the case that $D$ is bounded, \eqref{proof-eq000} follows from \eqref{proof-eq0}.

In the case that $D=\RR^N$, by the almost periodicity of
$a(t,x)$  in $x$, for  any  given $\varepsilon > 0,$ there is  $r_{\varepsilon} > 0$ such that any ball of radius $r_{\varepsilon}$ contains some $\Tilde{x} \in T_{\varepsilon}$,  where
$$
T_{\varepsilon} := \{\Tilde{x} : |a(t,x) - a(t,x + \Tilde{x})| < \varepsilon \; \forall \; (t,x) \in \RR \times \RR^N\}.
 $$
For given $\varepsilon > 0,$ we can find a  sequence $\{\Tilde{x}_n\}_{n \in \NN} \in T_{\varepsilon}$ such that
\begin{equation}\label{union}
\RR^N = \underset{n\in \NN}{\bigcup}{\color{red} B_{2r_{\varepsilon}}}(\Tilde{x}_n),
\end{equation}
where
  $B_{2r_{\varepsilon}}(\Tilde{x}_n):= \{ x\in \RR^N : \|x - \Tilde{x}_n \| <2 r_{\varepsilon} \} $.
Let $\varepsilon = \frac{\lambda}{2}$.  Then
\begin{eqnarray*}\label{subb}
\frac{\partial (\phi(t,x))}{\partial t} &\leq& \int_{\RR^N} \kappa(y - x)\phi(t,y)dy  + a(t,x + \Tilde{x}_n)\phi(t,x) + (a(t,x) - a(t,x+\Tilde{x}_n) - \lambda) \phi(t,x)\\
&\le&  \int_{\RR^N} \kappa(y - x)\phi(t,y)dy  + a(t,x + \Tilde{x}_n)\phi(t,x) + (\varepsilon - \lambda) \phi(t,x)\\
&=&   \int_{\RR^N} \kappa(y - x)\phi(t,y)dy  + a(t,x + \Tilde{x}_n)\phi(t,x)  - \frac{\lambda}{2} \phi(t,x).
\end{eqnarray*}
Hence, for some $0 < \Tilde{b} <1, \;\; \Tilde{b}\phi$ is a subsolution of
\begin{equation}
\label{dirichlet-kpp-eq translate}
 \p_t u(t,x) = \int_{\RR^N}
\kappa(y-x)u(t,y)dy+u(t,x)f(t,x+\Tilde{x}_n,u(t,x)),\quad x\in\RR^N.
\end{equation}
 By Proposition \ref{comparison-prop}, we have
 $$
 \Tilde{b}\phi(t,x) \le u(t,x+\Tilde{x}_n;-K,M )\quad  {\rm for} \,\,t\ge -K,\,\,  x \in \RR^N, \; \Tilde{x}_n \in T_{\varepsilon}.
$$
By arguments similar to \eqref{new-eq2}, we have
\begin{equation}
\label{new-eq3}
u(t+1,x+\tilde x_n;-K,M)\ge e^{-m}e^{\mathcal{K}} \tilde b \phi(t,\cdot),\quad \forall t\ge -K,\,\, x\in\RR^N.
\end{equation}
Without loss of generality, we may assume $x_0=0$ in \eqref{new-eq1}. Then
by   Proposition \ref{new-prop1}, \eqref{new-eq1}, and  \eqref{new-eq2},  there is $\tilde \delta_2>0$ such that
$$
u(t+1,x+\tilde x_n;-K,M)\ge \tilde \delta_2\quad \forall\,\, t\ge -K,\,\, x\in { B_{2r_\epsilon}(0)}.
$$
This  together with \eqref{union} implies that
\begin{equation}
\label{new-eq4}
u(t+1,x;-K,M)\ge \tilde \delta_2\quad \forall\, t\ge -K, \,\, x\in\RR^N,
\end{equation}
which implies \eqref{proof-eq000}.

By \eqref{u-star-def-eq}, \eqref{proof-eq00}, \eqref{proof-eq000}, and Lemma \ref{new-new-new-lm2}, $u^*(t,x)$ is a strictly positive bounded entire solution of \eqref{main-nonlinear-eq}.

\smallskip

(b) Assume that $\lambda_{PL}<0$. For any $u_0\ge 0$,
$$
u(t,x;0,u_0)\le \Phi(t,0)u_0\quad \forall\, t\ge 0, \,\, x\in D.
$$
Note that
$$
\limsup_{t\to\infty}\frac{\ln \|\Phi(t,0)u_0\|}{t}\le \lambda_{PL}<0.
$$

Hence
$$
0\le \limsup_{t\to\infty} \|u(t,\cdot;0,u_0)\|\le \limsup_{t\to\infty}\|\Phi(t,0)u_0\|=0.
$$
The theorem thus follows.
\end{proof}

{
\begin{remark}
\label{positive-entire-solution-rk}
As mentioned in Remark \ref{remark-0}, the definitions of $\lambda_{PL}(a),\lambda_{PL}^{'}(a),\lambda_{PE}(a),$ and $\lambda_{PE}^{'}(a)$ apply to general $a(t,x)$ which is bounded and uniformly continuous.
 When $f(t,x,u)$ is not assumed to be almost periodic in $t,$  if $\lambda_{PE}(a)>0$, then $u^*(t,x)$ defined in \eqref{u-star-def-eq} is bounded on $\RR\times\bar D$,  is  differentiable in $t$ and $\inf_{t\in\mathbb{R}} u^*(t,x)>0$ for each $x\in\bar D$, and satisfies \eqref{main-nonlinear-eq} for each $t\in\RR$ and $x\in\bar D$. Hence $\p_t u^*(t,x)$ is bounded on $\RR\times\bar D$. We can also prove that $u^*(t,x)$ is continuous in $x\in\bar D$.  In fact, let $g^*(t,x)=\int_{D}\kappa(y-x)u^*(t,y)dy$.  It is clear  that $g^*(t,x)$ is uniformly continuous in $t\in\mathbb{R}$ and $x\in\bar D$.
 for any $x_0\in\bar D$ and $\{x_n\}\subset \bar D$ with
$x_n\to x_0$, without loss of generality, we may assume that $u^*(t,x_n)\to \tilde u^*(t)$,
$g(t,x_n)\to g(t,x_0)$, and $f(t,x_n,u^*(t,x_n))\to f(t,x_0,\tilde u^*(t))$  as $n\to\infty$ locally uniformly in $t\in\mathbb{R}$. By \eqref{proof-eq00}, we have
$$
\tilde u^*_t=g(t,x_0)+\tilde u^*(t) f(t,x_0,\tilde u^*(t))\quad \forall\,\, t\in\mathbb{R}
$$
and
$$
u^*_t(t,x_0)=g(t,x_0)+u^*(t,x_0) f(t,x_0, u^*(t,x_0))\quad \forall\,\, t\in\mathbb{R}.
$$
By \eqref{proof-eq0} and Lemma \ref{new-new-new-lm1}, $\tilde u^*(t)= u^*(t,x_0)$.  It then follows that $u^*(t,x)$ is also continuous in $x\in\bar D$.
But $u^*(t,x)$ may not be strictly positive. However,
 if $D$ is bounded, then $u^*(t,x)$ is a
 strictly positive entire solution of \eqref{main-nonlinear-eq} and  is asymptotically stable with respect to positive perturbations.
\end{remark} }

\section{Monotonicity of $\lambda_{PE}(a,D)$ in $D$}

In this section, we investigate the monotonicity of $\lambda_{PE}(a,D)$ in $D$ and prove Theorem \ref{pev-thm1}.

\begin{proof} [Proof of Theorem \ref{pev-thm1}]
Let $D_1\subset D_2$ be given. Without loss of generality, we may assume that $\lambda_{PE}(a,D_2)=0$. For otherwise,
we can replace $a(t,x)$ by $a(t,x)-\lambda_{PE}(a,D_2)$. It then suffices to prove that $\lambda_{PE}(a,D_1)\le 0$.
We prove it by contradiction.

{ First, assume that $\lambda_{PE}(a,D_1)>0$. Let $\delta>0$ be such that
$\lambda_{PE}(a-\delta,D_1)>0$.   By Theorem \ref{existence-thm}, there is a strictly positive bounded entire solution $u_1^*(t,x)$ of
 \begin{equation}
\label{new-new-eq1-0}
u_t=\int_{D_1}\kappa(y-x)u(t,y)dy+u(t,x)(a(t,x)-\delta -u(t,x)),\quad x\in \bar D_1.
\end{equation}
For given $M>0$, let  $u_2(t,x;-K,M)$ be the solution of
\begin{equation}
\label{new-new-eq1}
u_t=\int_{D_2}\kappa(y-x)u(t,y)dy+u(t,x)(a(t,x)-\delta/2 -u(t,x)),\quad x\in \bar D_2
\end{equation}
with $u_2(-K,x;-K,M)=M$.   By Propositions \ref{comparison-prop} and  \ref{new-comparision-prop},
\begin{equation}
\label{monotone-eq1}
u_1^*(t,x)\le  u_2(t,x;-K,M)\quad \forall\, t\ge -K,\,\, x\in  \bar D_1,\quad M\gg 1,
\end{equation}
and
\begin{equation}
\label{monotone-eq2}
u_2(t,x;-K,M) \le M\quad \forall\, t\ge -K,\,\, x\in\bar D_2,\quad M\gg 1.
\end{equation}
Fix $M\gg 1 $.
By  the arguments of  Theorem \ref{existence-thm},
\begin{equation}
\label{monotone-eq5}
u_2^*(t,x):=\lim_{K\to\infty} u_2(t,x;-K,M)(\le M),\quad t\in\RR,\,\, x\in \bar D_2
\end{equation}
is well defined,  and satisfies \eqref{new-new-eq1} for all $t\in\RR$ and $x\in\bar D_2$.

Next, we  claim that $u_2^*(t,x)$ is strictly positive.  We divide the proof of the claim  into two cases.

\smallskip

\noindent {\bf Case 1.} $D_2$ is bounded. Note that there is $m>0$ such that
$$
a(t,x)-\delta/2-u_2(t,x;-K,M)\ge -m\quad \forall\, t\ge -K,\,\, x\in\bar D_2.
$$
This together with \eqref{new-new-eq1} and Proposition \ref{comparison-prop} implies that
$$
u_2(t,\cdot;-K,M)\ge e^{-m} e^{\mathcal{K}_2} u_2(t-1,\cdot;-K,M)\quad \forall\, t\ge -K+1,
$$
where $\mathcal{K}_2 u=\int_{D_2}\kappa(y-x)u(y)dy$ for $u\in C_{\rm unif}^b(\bar D_2)$.
By \eqref{monotone-eq1}, there is $\delta_0>0$ such that
$$
\int_{D_2} u_2(t-1,x;-K,M)dx\ge \delta_0 \quad \forall\,\,  t\ge -K+1,\quad x\in \bar D_2.
$$
This together with the arguments of Proposition \ref{new-prop1} implies that there is $\tilde \delta_0>0$ such that
$$
u_2(t,x;-K,M)\ge \tilde\delta_0\quad \forall\, t\ge -K+1,\quad x\in \bar D_2.
$$
It then follows that
$$
u_2^*(t,x)\ge \tilde \delta_0\quad \forall\, t\in\RR,\,\, x\in\bar D_2.
$$
Hence the claim holds in the case that $D_2$ is bounded.

\smallskip

\noindent {\bf Case 2.} $D_2=\RR^N$. By the almost periodicity of $a(t,x)$ in $x$, there are $\{x_n\}\subset\RR^N$ and
$r>0$ such that
$$
\RR^N=\cup_{n=1}^\infty B_r(x_n),
$$
and
$$
|a(t,x+x_n)-a(t,x)|\le \delta/2\quad \forall\, t\in\RR,\quad x\in\RR^N.
$$
Then
\begin{align*}
\p_t u_{1}^*(t,x)&=\int_{D_{1}}\kappa(y-x)u_{1}^*(t,y)dy + u_{1}^*(t,x)(a(t,x)-\delta-u_{1}^*(t,x)\\
&\le \int_{D_{1}}\kappa(y-x)u_{1}^*(t,y)dy +u_{1}^*(t,x)(a(t,x+x_n)-\delta/2-u_{1}^*(t,x))\quad \forall\, t\in\RR,\,\, x\in
\bar D_{1}.
\end{align*}
This together with  Propositions \ref{comparison-prop} and  \ref{new-comparision-prop}  implies that
$$
u_2(t,x+x_n;-K,M)\ge u_{1}^*(t,x) \quad \forall\, t\ge -K,\,\, x\in\bar D_{1}
$$
and then
$$
u_2^*(t,x)\ge u_1^*(t,x-x_n)\quad \forall\, t\in\RR,\,\, x-x_n\in\bar D_{1}.
$$
By the arguments in Case 1, there is $\tilde \delta_0>0$ such that
$$
u_2^*(t,x)\ge \tilde \delta_0\quad \forall\, t\in\RR,\,\, x\in B_r(x_n),\,\, n\ge 1.
$$
Therefore, $u^*(t,x)$ is strictly positive and the claim also holds in the case $D_2=\RR^N$.

\smallskip

Now, by  Lemma \ref{new-new-new-lm2}, $u_2^*(t,x)$ is  uniformly continuous in $t\in\RR$ and $x\in \bar D_2$.
Hence $u_2^*(t,x)$ can be used as a test function in the definition of $\Lambda_{PE}(a,D_2)$.
\begin{equation}
\label{new-new-eq1-000}
-\frac{\p u_2^*}{\p t}+\int_{D_2}\kappa(y-x)u_2^*(t,y)dy+a(t,x) u_2^*(t,x)\ge \frac{\delta}{2} u_2^*(t,x),\quad t\in\RR,\,\, x\in \bar D_2.
\end{equation}
This
implies  that $\lambda_{PE}(a,D_2)\ge \frac{\delta}{2} >0$, which is a contradiction. Hence
$\lambda_{PE}(a,D_1)\le 0$.
The theorem is thus proved.}
\end{proof}

\bk


\begin{thebibliography}{99}
\bibitem{BaLi0} X. Bai and F.  Li,
 Optimization of species survival for logistic models with non-local dispersal,
  {\it Nonlinear Anal. Real World Appl.}, {\bf 21} (2015), 53-62.


\bibitem{BaZh}
P. Bates and G. Zhao, Existence, Uniqueness and Stability of the stationary solution to a nonlocal evolution equation arising in population dispersal,
{\it J. Math. Anal. Appl.}, \textbf{332}(9) (2007),  428-440.


\bibitem{BaLi} X. Bao and W.-T. Li,  Propagation phenomena for partially degenerate nonlocal dispersal models in time and space periodic habitats,
    {\it  Nonlinear Anal. Real World Appl.}, {\bf 51} (2020), 102975, 26 pp.


\bibitem{BeCoVo1}
H. Berestycki, J.  Coville, and H. Vo,  Persistence criteria for populations with non-local
ispersion, {\it J. Math. Biol.},  {\bf 72} (2016), 1693-1745.



\bibitem{Co1}
J. Coville,
 On a simple criterion for the existence of a principal eigenfunction of some nonlocal operators,
  {\it Journal of Differential Equations}, \textbf{249} (2010), 2921-2953.


\bibitem{Co3} J. Coville, Nonlocal refuge model with a partial control,
{\it Discrete Contin. Dyn. Syst.}, {\bf  35} (4) (2015),
1421-1446.


\bibitem{CoDaMa}
J. Coville, J.  D\'avila, and S. Mart\'inez,  Existence and Uniqueness of solutions to a nonlocal equation with monostable nonlinearity, {\it  SIAM J. Math. Anal.},  \textbf{39} (2008), 1683-1709.

 \bibitem{CoDaMa1} J. Coville, J. Davila, and  S. Martinez, Pulsating fronts for nonlocal dispersion and KPP nonlinearity,
 {\it Ann. Inst. H. Poincaré Anal. Non Linéaire}, {\bf 30} (2013), 179-223.

\bibitem{Eva} L. C. Evans, Partial Differential Equations, Graduate Studies in Mathematics, Vol. 19, American Mathematical Society (2002).

\bibitem{DeShZh} P. De Leenheer, W. Shen, and A. Zhang,
 Persistence and extinction of nonlocal dispersal evolution equations in moving habitats,
  {\it Nonlinear Anal. Real World Appl.}, {\bf 54} (2020), 103110, 33 pp.

 \bibitem{Fif}P.C. Fife, An integrodifferential analog of semilinear parabolic PDEs, in: Partial Differential Equations and Applications, in: Lecture Notes in Pure and Appl. Math., vol. 177, Dekker, New York,
1996, 137-145.


\bibitem{fink}
A.M. Fink,
 Almost Periodic Differential Equations,  Lecturen Notes in Mathematics, No 377, Springer-Verlag, New York, 1974.


 \bibitem{GaRo1} J. Garcia-Melian, J.D. Rossi, A logistic equation with refuge and nonlocal diffusion,
 {\it Commun. Pure
Appl. Anal.}, {\bf  8} (6) (2009), 2037-2053.

 \bibitem{GrHiHuMiVi}
 M. Grinfeld, G.  Hines, V. Hutson, K. Mischaikow, and G.T. Vickers,
  Non-local Dispersal,
  {\it  Differ, Integr. Equ.},  \textbf{18} (2005), 1299-1320.












\bibitem{HuMaMiVi}
V. Hutson, S.  Martinez,  K.  Mischaikow, and G.T. Vickers,
 The evolution of Dispersal, {\it J. Math. Biol.},  \textbf{47} (2003),  483-517.



\bibitem{KaLoSh}
C.-Y. Kao, Y. Lou, and W.  Shen,
 Random Dispersal vs Non-Local Dispersal,
 {\it  Discr. Cont. Dyn. Syst.},  \textbf{26}(2) (2010), 551-596.

\bibitem{Kong}
Kong, Lang;
{\it Spatial Spread Dynamics of Monostable Equations in Spatially Locally Inhomogeneous Media with Temporal Periodicity},  Doctoral Dissertation (Auburn University) (2013)


\bibitem{LiSuWa}
W.-T. Li, Y.-J.  Sun, and Z.-C. Wang,
 Entire Solutions in the Fisher-KPP equation with Nonlocal Dispersal,
  {\it Nonlinear Analysis: Real World Appl.},  \textbf{11}(4) (2010), 2302-2313.

  \bibitem{LiWaZh} W.-T. Li, J.-B. Wang, and X.-Q. Zhao,
  Spatial dynamics of a nonlocal dispersal population model in a shifting environment,
   {\it J. Nonlinear Sci.}, {\bf  28} (2018), no. 4, 1189-1219.

 \bibitem{LiZh} X.  Liang and T. Zhou,
  Spreading speeds of nonlocal KPP equations in almost periodic media,
  {\it  J. Funct. Anal.}, {\bf 279} (2020), no. 9, 108723, 58 pp.

  \bibitem{LuPaLe} F. Lutscher, E. Pachepsky,  and M.A. Lewis, The effect of dispersal patterns on stream populations,
{\it SIAM Rev.}, {\bf 47} (4) (2005), 749-772.

\bibitem{Nka} M. N. Nkashama,
Dynamics of logistic equations with non-autonomous bounded coefficients,
{\it
Electron. J. Differential Equations}, {\bf  2000}, No. 02, 8 pp.



\bibitem{OnSh}
M.A. Onyido and W. Shen,
Nonlocal dispersal equations with almost periodic dependence. I. Principal spectral theory,  {\it Journal of Differential equations}, {\bf 295} (2021), 1-38.

\bibitem{OnSh1} M.A. Onyido and W.  Shen,  Corrigendum to: "Nonlocal dispersal equations with almost periodic dependence. I. Principal spectral theory'' [J. Differ. Equ. 295 (2021) 1–38], {\it J. Differential Equations} {\bf  300} (2021), 513-518.



\bibitem{Paz}
A.L. Pazy,  Semigroups of Linear Operators and Applications to Partial Differential Equations. Springer, New York, 1983.

\bibitem{RaSh}
N. Rawal and W.  Shen,
 Criteria for the existence and lower bounds of principal eigenvalues of time periodic nonlocal dispersal operators and applications,
  {\it J. Dynam. Differential Equations}, {\bf  24} (2012), 927-954.


\bibitem{RaShZh}
N. Rawal, W.  Shen, and A.  Zhang,
 Spreading speeds and traveling waves of nonlocal monostable equations in time and space periodic habitats,
  {\it Discrete Contin. Dyn. Syst.},  {\bf 35} (2015), no. 4, 1609-1640.

\bibitem{Sh} W. Shen,
Stability of transition waves and positive entire solutions of Fisher-KPP equations with time and space dependence, {\it Nonlinearity} {\bf 30(6)} (2017) , 3466-3491.



\bibitem{ShXi2}
W. Shen and X.  Xie,
Approximations of random dispersal operators/equations by nonlocal dispersal operators/equations,
 {\it J. Differential Equations}, {\bf  259} (2015), no. 12, 7375-7405.




\bibitem{ShZh1}
W. Shen and A. Zhang,
Spreading speeds for monostable equations with nonlocal dispersal in space periodic habitats,
{\it Journal of Differential Equations}, \textbf{249} (2010), 747-795.

 \bibitem{ShZh2}
W.  Shen and A. Zhang,
 Stationary solutions and spreading speeds of nonlocal monostable equations in space periodic habitats, {\it  Proc. Amer. Math. Soc.}, {\bf 140} (2012), no. 5, 1681-1696.

\bibitem{ShVo}
Z. Shen and H.-H. Vo,
 Nonlocal dispersal equations in time-periodic media: principal spectral theory, limiting properties and long-time dynamics,
  {\it J. Differential Equations}, {\bf  267} (2019), no. 2, 1423-1466.

   \bibitem{SuLiLoYa} Y.-H. Su, W.-T. Li, Y. Lou, and F.-Y. Yang,
   The generalised principal eigenvalue of time-periodic nonlocal dispersal operators and applications,
    {\it J. Differential Equations}, {\bf 269} (2020), no. 6, 4960-4997.

 \bibitem{Tur}  P. Turchin, Quantitative Analysis of Movement: Measuring and Modeling Population Redistribution
in Animals and Plants, Sinauer Associates, 1998.


\bibitem{ZhZh1} G.-B. Zhang and X.-Q. Zhao,
  Propagation dynamics of a nonlocal dispersal Fisher-KPP equation in a time-periodic shifting habitat,
   {\it J. Differential Equations}, {\bf 268} (2020), no. 6, 2852-2885.

 \bibitem{ZhZh2} G.-B.  Zhang and X.-Q. Zhao,  Propagation phenomena for a two-species Lotka-Volterra strong competition system with nonlocal dispersal,
     {\it  Calc. Var. Partial Differential Equations}, {\bf  59} (2020), no. 1, Paper No. 10, 34 pp.

\end{thebibliography}
\end{document}